\documentclass[a4paper]{article}

\usepackage[utf8]{inputenc}
\usepackage[english]{babel}
\usepackage{amsmath, amsthm, amsfonts, amssymb, enumerate}
\usepackage{mathrsfs}
\usepackage[numbers]{natbib}
\usepackage{xcolor}
\usepackage{graphicx}
\usepackage{hyperref}
\usepackage[normalem]{ulem}
\usepackage{soul}
\usepackage{bm}
\usepackage{bbm} 
\usepackage{verbatim}   
\usepackage{ifthen}
\usepackage{hyperref}

\hyphenation{sub-mani-fold}


\theoremstyle{plain}

\newtheorem{lemma}{Lemma}
\newtheorem{theorem}[lemma]{Theorem}
\newtheorem*{lemma*}{Lemma}
\newtheorem{corollary}[lemma]{Corollary}
\newtheorem*{theorem*}{Theorem}
\newtheorem*{corollary*}{Corollary}
\newtheorem{proposition}[lemma]{Proposition}
\newtheorem*{proposition*}{Proposition}

\theoremstyle{definition}
\newtheorem{definition}[lemma]{Definition}
\newtheorem{example}[lemma]{Example}
\newtheorem{remark}[lemma]{Remark}
\newtheorem*{remark*}{Remark}
\newtheorem*{notation*}{Notation}
\newtheorem*{convention*}{Convention}


\definecolor{aquam}{rgb}{0.5,1.0,1.0}
\definecolor{bbrown}{rgb}{0.75,0.38,0.15}
\definecolor{Cyan}{rgb}{0,0.6,0.6}
\definecolor{Darkblue}{rgb}{0,0,1}
\definecolor{Dodgerblue2}{rgb}{0,0.5,1}
\definecolor{Green}{rgb}{0,0.6,0.06}
\definecolor{Kahki}{rgb}{1,1,0.5}
\definecolor{Magenta}{rgb}{0.7,0,0.7}
\definecolor{bMagenta}{rgb}{1,.6,1}
\definecolor{Orange}{rgb}{0.8,0.3,0}
\definecolor{dOrchid}{rgb}{0.7,0.2,0.4}
\definecolor{Orchid}{rgb}{1,0.5,1}
\definecolor{Purple}{rgb}{0.65,0.07,0.85}
\definecolor{Royalblue}{rgb}{0.6,0.85,0.87}
\definecolor{Tan}{rgb}{0.54,0.42,0.23}
\definecolor{bTan}{rgb}{0.94,0.82,0.63}
\definecolor{zoltan}{rgb}{0,0.1,0.3}
\definecolor{Turquoise}{rgb}{0,0.85,0.87}
\definecolor{Yellow}{rgb}{1,1,0}
\definecolor{darkamber}{rgb}{0.4,0.19,0.28}
\definecolor{bYellow}{rgb}{1,1,0.6}
\definecolor{bRed}{rgb}{1,0.7,0.7}
\definecolor{boxcolb}{rgb}{0.87,0.77,0.75}
\definecolor{boxcol}{rgb}{0.6,0.85,0.87}
\definecolor{boxcolgreen}{rgb}{0.64,0.93,0.79}
\definecolor{boxcolaa}{rgb}{.75,.99,.70}
\definecolor{boxcolbb}{rgb}{0.39,0.50,0.56}
\definecolor{boxcolcc}{rgb}{1,0.81,0.65}
\definecolor{yy}{rgb}{0.43,0.21,.18}
\definecolor{gA}{gray}{0.5}
\definecolor{gB}{gray}{0.8}
\definecolor{gC}{gray}{0.9}






\usepackage{xcolor}

\hypersetup{
  colorlinks   = true, 
  urlcolor     = blue, 
  linkcolor    = blue, 
  citecolor   = red 
}


\newcommand{\conv}{\operatorname{conv}}

\newcommand{\N}{{\mathbb N}}

\newcommand{\R}{{\mathbb R}}
\newcommand{\Q}{{\mathbb Q}}

\newcommand{\msr}{Y}

\newcommand{\pr}{\mathrm{pr}}
\newcommand{\hdim}{\mathrm{dim}_H}
\newcommand{\cgraph}{\operatorname{cgraph}}
\newcommand{\graph}{\operatorname{graph}}

\definecolor{grey}{rgb}{0.7, 0.7, 0.7}
\definecolor{darkgreen}{rgb}{0., 0.6, 0.}

\newcommand{\cantorf}{\mathfrak{C}}

\newcommand{\exclude}[1]{}

\bibliographystyle{plain}

\title{Continuous functions with impermeable graphs}

\author{Zoltán Buczolich\thanks{ The project leading to this application has received funding from the European Research Council (ERC) under the European Union’s Horizon 2020 research and innovation programme (grant agreement No. 741420).
This author was also supported by the Hungarian National Research, Development and Innovation Office--NKFIH, Grant 124749 and  at the time of completion of this paper was holding a visiting researcher position 
at the Rényi Institute.}, Gunther Leobacher\thanks{G.~Leobacher is supported by the Austrian
Science Fund (FWF): Project F5508-N26, which is part of the Special Research
Program `Quasi-Monte Carlo Methods: Theory and Applications'.}\; and Alexander Steinicke}


\begin{document}

\maketitle

\begin{abstract}
We construct a Hölder continuous function on the unit interval 
which coincides  in uncountably (in fact continuum)  many points with every function of total variation smaller than 1 passing  through the origin.

We say that a function with this property has impermeable graph, and we
present 
further examples of functions both with permeable and impermeable graphs.

The first example function is subsequently used to construct an example of a continuous function on the plane 
which is intrinsically Lipschitz continuous on the complement of the graph of a 
Hölder continuous function
with impermeable graph, but which is not Lipschitz continuous on the plane. 

As another main result we construct a continuous function on the unit interval 
which coincides  in a set of Hausdorff dimension 1 with every function of total variation smaller than 1 which passes through the origin.

\end{abstract}

\bigskip

\noindent Keywords: uncountable zeros, permeable graph, Hausdorff dimension of zeros, intrinsic metric, permeable sets \\

\noindent MSC 2020: 26A16, 28A78, 26A21, 26B30, 26B35, 54C05, 54E40

\section{Introduction}

Although continuous functions on compact intervals are among  the most prominent mathematical objects studied
in undergraduate analysis courses, they continue to offer many interesting questions and results on an advanced level.
One message of historical importance is that general continuous functions differ significantly from differentiable ones, 
 in that they may be nowhere differentiable and  nowhere locally of bounded variation.

The differences become less expressed if one compares continuous functions with Lipschitz- or Hölder continuous ones. 
For example, it is well-known that with respect to the Wiener measure, almost every continuous function is Hölder continuous, but  is also almost nowhere differentiable and of infinite total variation, see \cite[Theorem 9.25, Theorem 9.18, Exercise 5.21]{karatzas-shreve} or 
\cite[Chapter I, Theorem 2.1, Exercise 2.9, Corollary 2.5]{revuz2004continuous}.  
In this paper, we introduce a new property of functions, namely, that of permeability
of its graph. This notion derives from a geometric notion of permeability of subsets of metric spaces, 
introduced in \cite{leoste21}, which we will recall in Section \ref{sec:cantor-madness}, and which is related to concepts of removability of subsets, cf.~\cite{rajala2019,younsi2015}.

Loosely speaking, a function has a permeable graph, if its graph can be avoided by the graph of some bounded variation
function to the degree that the graphs have at most countably  many intersection
points.

 For a function $f\colon [a,b]\to \R$ let
$V_a^b(f)$ denote its  (possibly  infinite) total variation over the interval $[a,b]$.

\begin{definition}\label{def:permeable-graph}
Let $g\colon [a,b]\to \R$ be a function. We say that $g$ has a {\em permeable graph}, if for all 
$y\in \R$ and all $\delta>0$ there exists a function $f\colon [a,b]\to \R$ with 
$f(a)=f(b)=y$,  $V_a^b(f)<\delta$
and such that the topological closure of the set
\[
 \{t\in [a,b]\colon f(t)=g(t)\}
\]
is at most countable. If a function does not have a permeable graph, we say that it has an {\em impermeable graph}.
\end{definition}

If $g$ has an impermeable graph then there exist $y\in \R$ and a $\delta>0$  such that for any $f\colon [a,b]\to \R$ with 
$f(a)=f(b)=y$, $V_a^b(f)<\delta$, 
the topological closure  
$\overline{\{t\in [a,b]\colon f(t)=g(t)\}}$  is uncountable. Then  this set    is an uncountable Borel set
and hence by the Perfect Set Theorem for Borel Sets \cite[13.6 Theorem]{kechris1995classical} it contains a Cantor set and therefore  is of cardinality continuum.

For example, it is immediate that polynomials have permeable graphs: Let $g$ be a polynomial of degree $n$,
and let $\delta>0$. If $n\ge 1$, then $g$ has at most $n$ intersections with any line, so given 
$y\in \R$, the graph of the function $f$ with  $f(t)=y$, $t\in [a,b]$, has at most $n$ intersections with that of $g$, 
and $V_a^b(f)=0<\delta$. Thus $g$ has a permeable graph.
If $n=0$ or $g\equiv 0$, we distinguish two cases. If $g(0)\ne y$ set $f(t)=y$, $t\in [a,b]$, so that $V_a^b(f)=0$ and the graphs of $f$ and $g$ have no intersection. Otherwise set
\[
f(t):=\begin{cases}
      y& \text{if }t\in \{a,b\}\\
      y+\frac{\delta}{3} & \text{if }t=1\,.
     \end{cases}
\]
Then 
the graphs of $f$ and $g$ have precisely two intersections and $V_a^b(f)< \delta$.

Therefore the graph of $g$ is permeable.\\

In the preceding example we were even able to find a function $f$ with only finitely many intersections with the graph of $g$. In that case we say that $g$ has a {\em finitely permeable graph}. There is a significant generalization of 
this  example:

\begin{theorem}\label{thm:bdd}
Let $g\colon [a,b]\to \R$ be of bounded variation. Then $g$ has a finitely permeable graph.
\end{theorem}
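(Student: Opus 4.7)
The plan is to take $f$ to be an elementary step function. Given $y\in\R$ and $\delta>0$, select $c\in\R$ with $0<|c-y|<\delta/3$ for which $g^{-1}(c)$ is finite, and set $f(a):=f(b):=y$ and $f(t):=c$ on $(a,b)$. A direct computation then gives $V_a^b(f)=2|c-y|<\delta$, and the coincidence set
\[
\{t\in[a,b]:f(t)=g(t)\}\subseteq\{a,b\}\cup g^{-1}(c)
\]
is finite, which is exactly the finite permeability that is required. The whole theorem thus reduces to the following lemma: \emph{for every BV function $g$ on $[a,b]$, the set $\{c\in\R:g^{-1}(c)\text{ is infinite}\}$ has Lebesgue measure zero}, so that its complement meets every punctured neighborhood of $y$.

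To prove the lemma, write $g=g_1-g_2$ in Jordan decomposition with $g_1,g_2\colon[a,b]\to[0,V]$ non-decreasing (where $V:=V_a^b(g)$), and set $\phi:=g_1+g_2$. This $\phi$ is non-decreasing and satisfies $|g(t)-g(s)|\le\phi(t)-\phi(s)$ for $s\le t$. In particular, $\phi(s)=\phi(t)$ forces $g(s)=g(t)$, so $g$ factors as $g=\tilde g\circ\phi$ with $\tilde g\colon\phi([a,b])\to\R$ well-defined and $1$-Lipschitz; extend $\tilde g$ with the same Lipschitz constant to $[0,\phi(b)]$ by McShane extension.

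For the Lipschitz function $\tilde g$, the classical one-dimensional Sard estimate and the Luzin (N) property together show that $\tilde g(N_0)$ is Lebesgue null, where $N_0$ is the union of $\{s:\tilde g'(s)=0\}$ and the set on which $\tilde g'$ fails to exist. For $c\notin\tilde g(N_0)$, every point of $\tilde g^{-1}(c)$ has $\tilde g'\ne 0$ and is therefore isolated in $\tilde g^{-1}(c)$; by compactness $\tilde g^{-1}(c)$ is then finite. To pass from $\tilde g$ back to $g$, note that $g^{-1}(c)=\phi^{-1}(\tilde g^{-1}(c))$ is a finite union of fibers of $\phi$, each of which is an interval in $[a,b]$; the (countable) set $P$ of plateau values of $\phi$ contributes an additional null image $\tilde g(P)$, and for $c$ outside $\tilde g(N_0)\cup\tilde g(P)$ every contributing fiber is a singleton, completing the proof of the lemma.

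The most delicate point is precisely this last reduction: the classical Banach indicatrix identity $V_a^b(g)=\int_\R\#g^{-1}(c)\,dc$ holds only when $g$ is continuous and so cannot be quoted directly when $g$ has jumps. The factorization through $\phi$ is the device that circumvents this difficulty, because it absorbs all jumps of $g$ into plateaus of $\phi$, which are automatically countable and therefore negligible for the Lebesgue-measure argument.
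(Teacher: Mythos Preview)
Your proof is correct and follows the same overall skeleton as the paper's: choose a value $c$ near $y$ with $g^{-1}(c)$ finite and let $f$ be the obvious step function. The only difference is in how the key lemma (that $\{c:\#g^{-1}(c)=\infty\}$ is Lebesgue-null) is justified. The paper simply cites Banach's indicatrix theorem, whereas you give a self-contained argument via the factorization $g=\tilde g\circ\phi$ through the variation function, followed by one-dimensional Sard and the Luzin $(N)$ property for the resulting Lipschitz $\tilde g$. Your observation that the classical indicatrix identity $V_a^b(g)=\int_\R\#g^{-1}(c)\,dc$ is stated for \emph{continuous} BV functions is well taken; the paper's citation is correct in spirit (the finiteness-a.e.\ conclusion does extend to discontinuous BV functions) but your derivation makes this extension explicit rather than leaving it implicit. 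The factorization device is clean and arguably more transparent than tracking what happens to Banach's formula under jumps. One cosmetic point: you write the extended domain of $\tilde g$ as $[0,\phi(b)]$, which presumes $\phi(a)=0$; this is harmless after normalizing the Jordan decomposition, but strictly it should read $[\phi(a),\phi(b)]$.
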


\begin{proof}
Let $y\in \R$ and $\delta>0$. 
By Banach's  indicatrix theorem \cite[Theorème 2 or Corollaire 1]{banach25}, $g^{-1}(\{z\})$ is finite for Lebesgue-a.e.~$z\in\R$.
Therefore we can find  $y_0\in (y-\tfrac{\delta}{2},y+\tfrac{\delta}{2})$ such that  $n:=\# g^{-1}(\{y_0\})$ is finite.

Define $f\colon [a,b]\to \R$ by 
\[
f(x):=\begin{cases}
y&\text{if } x\in \{a,b\}\,,\\
y_0&\text{else.}
\end{cases}
\]
Then $V_a^b(f)<\delta$ and, by construction,  $\{x\colon f(x)=g(x)\}\le n+2$, since it might be that 
$g(a)=y$ and/or $g(b)=y$.
\end{proof}

The question about permeability of graphs stands in a long tradition of studying the zero and level sets of 
continuous functions. For example, in \cite{benavides} it  was   shown that for the ``typical'' continuous function on 
$[0,1]$, which has at least one zero, the set of zeros is uncountable but has Lebesgue measure 0.
Banach \cite{banach25} studied the cardinality of the intersection of the graph of a bounded variation function 
with horizontal lines. This led to the famous Banach's indicatrix theorem. \v{C}ech \cite{Cech31} proved that 
a continuous function on a compact interval for which the intersection with every  horizontal line is finite, must be monotone on some
subinterval. 
In \cite{bruckner93} a number of extensions of \v{C}ech's theorem are shown.  In \cite{Buczolich88} it is shown 
that every continuous function is either convex or concave on some subinterval, or there are 2 nonempty perfect
sets such that the restriction of the function on the first set is strictly convex and that on the second set is strictly concave.
In \cite{minakshisundar} it is shown that almost every horizontal line that intersects the graph of a continuous nowhere differentiable function $g$  does so in an uncountable set. That statement obviously 
transfers to the intersection with (non-vertical) lines or, more generally, with the graph of a 
differentiable function. Another result in this direction is \cite[Theorem 12]{Brown1999}, which states that 
for a nowhere monotone continuous function $g$, the level sets $g^{-1}(\{y\})$ are uncountable if $y\in (\min(g),\max(g))$. Our result fits into this line of research in that we present a continuous function which has
the property that every continuous function of sufficiently small total  variation which intersects the first function, does so in uncountably many points.

In this article it is shown that: 
\begin{itemize}

 \item Every function of bounded variation has permeable graph (Theorem \ref{thm:bdd}).
 \item Typical (in the sense of Baire category) continuous functions have permeable graphs (Theorem \ref{thm:typical}).
 \item There exists a Hölder continuous function with an impermeable graph (Theorem \ref{thm:hoelder-ex}).
 \item There are examples of continuous functions $g$ with impermeable graphs, which are `extreme' 
 in that their points of intersection with every $f$ with $V_0^1(f)<1$, $f(0)=0$ 
 has Hausdorff dimension 1 (Theorem \ref{*impHau}).
\end{itemize}
Using Theorem \ref{thm:bdd}  we further learn:

\begin{itemize}
 \item Every  Lipschitz function has a permeable graph. In particular, every $C^1$ function has a permeable graph. 
 \item Every absolutely continuous function has a permeable graph.
 \end{itemize}
Thus, we give an incomplete but still very informative first  study of the  permeable graph property. Another result shows that
permeable functions can still be rather wild: 
\begin{theorem*}[{\cite[Theorem 8]{Brown1999}}]
If $M$ and $N$ are first category subsets of $[0, 1]$, then there exists a function $g\colon [0,1]\to\R$ with range $[0,1]$ and such that for all $m\in \R$, $t\mapsto g(t)+mt$ is not monotone  on any subinterval of $[0,1]$ and
\begin{enumerate}
\item for every $y \in  N$, $g^{-1}(y)$ is finite and
\item for every $y \in \R$, $g^{-1}(y)\cap M$ is finite.
\end{enumerate}
\end{theorem*}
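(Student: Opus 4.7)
\medskip
\noindent\textbf{Proof plan.}
First I would reduce to the case where $M$ and $N$ are $F_\sigma$, writing $M=\bigcup_{n}M_n$ and $N=\bigcup_{n}N_n$ as increasing unions of closed nowhere dense sets; it then suffices to arrange the finiteness conditions on each $M_n$, $N_n$ separately. Moreover, the requirement that $t\mapsto g(t)+mt$ be non-monotone on every subinterval of $[0,1]$, for every $m\in\R$, reduces to a countable family of oscillation requirements indexed by rational $m$ and by subintervals with rational endpoints, the case of real $m$ then following from a uniform estimate/interpolation.

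The plan is to construct $g$ as a pointwise limit of a sequence $(g_k)$ of piecewise linear functions $[0,1]\to[0,1]$ with $\|g_k-g_{k-1}\|_\infty$ summable. At stage $k$ I would address the $k$-th item of an enumeration that cycles through three kinds of requirements: (a) an oscillation requirement, where I insert fine ``teeth'' into $g_{k-1}$ on a subinterval so that $g+mt$ has large oscillation on every sub-subinterval for a specified rational $m$; (b) a level-set requirement for some $y\in N_k$, where I perturb $g_{k-1}$ near $g_{k-1}^{-1}(\{y\})$ to make that preimage finite; (c) an injectivity requirement on $M_k$, where I perturb $g_{k-1}$ so that no level of $g_k$ is attained at infinitely many points of $M_k$. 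Each perturbation is feasible because $M_n$ and $N_n$ are nowhere dense, so their complements contain open intervals on which one may freely modify $g$ without disturbing the finitely many values pinned in earlier stages. Surjectivity onto $[0,1]$ is arranged by making every rational in $[0,1]$ an attained value at some finite stage.

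The main obstacle I expect is the tension between (a) and (b)--(c): the oscillation required for nowhere-monotonicity typically yields uncountable level sets, in direct conflict with the desired finiteness on $M$ and $N$. The resolution should be that the perturbations in (a) can be designed to take values in a short range chosen to avoid the closed nowhere dense sets $N_1,\dots,N_{k-1}$ already processed, and to be injective on the finite portions of $M_1,\dots,M_{k-1}$ already fixed. Since nowhere dense sets miss every open interval, there is always room to select the range and slope of the new perturbation so that previously established conditions persist. The technical heart of the argument is then the bookkeeping: verifying inductively that each new perturbation preserves all of the finitely many conditions established so far, and that the resulting limit $g$ indeed inherits properties (1) and (2) together with nowhere-monotonicity of $g+mt$ for every $m\in\R$.
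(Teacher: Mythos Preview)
The paper does not prove this theorem at all: it is quoted verbatim as \cite[Theorem 8]{Brown1999} and used as a black box to exhibit a wild continuous function with permeable graph. There is therefore no ``paper's own proof'' against which to compare your proposal.

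That said, your sketch has a genuine gap worth flagging. In step (b) you propose to handle ``a level-set requirement for some $y\in N_k$'' at stage $k$, as though the points of $N$ could be enumerated. But $N$ is only assumed first category, so each $N_k$ may well be an uncountable closed nowhere dense set (a Cantor set, say), and condition (1) demands that $g^{-1}(\{y\})$ be finite for \emph{every} $y\in N$ simultaneously. Handling one $y$ per stage cannot achieve this. The actual argument in \cite{Brown1999} instead arranges, at each stage, that the values taken by the new oscillatory pieces avoid the closed nowhere dense set $N_k$ entirely (except at finitely many controlled points), and that the new pieces are injective off $M_k$; this treats the whole sets $N_k$ and $M_k$ at once rather than point by point. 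Your last paragraph hints at this idea (``avoid the closed nowhere dense sets $N_1,\dots,N_{k-1}$''), but the enumeration in (b) contradicts it, and you would need to restructure the induction so that each stage handles an entire $N_k$ and $M_k$, not a single level.
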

Thus choosing $N=\Q\cap[0,1]$ (or another dense set of first category), and considering  the function 
$g$ given by the cited theorem, we can, for each $\delta>0$, construct a function $f$ with $V_0^1(f)<\delta$ meeting $g$ in at most finitely many points in the same way as in the proof of Theorem \ref{thm:bdd}. Therefore the graph of $g$ is permeable.\medskip

On the other hand, the square root of the modulus of the so-called Volterra-function, see, e.g.~\cite{PonceCampuzano2015VitoVC}, provides us with a function of unbounded total variation on every
open interval that intersects the Smith-Volterra-Cantor-set (a.k.a.~fat Cantor set), which has Lebesgue measure 1/2.
Nevertheless, this function has a finitely permeable graph.\bigskip

In Section \ref{sec:metric-top} we apply the earlier results to a question from metric topology.
First we recall  the permeability of sets, cf.~Definition 
\ref{def:permeable}, which was first introduced in 
 \cite{leoste21}, where the following `removability-theorem' was  also  proven
  (the definition of intrinsically $L$-Lipschitz continuous functions will be  given  in Definition \ref{def:pw-lip}):
 
\begin{theorem*}[{\cite[Theorem 15]{leoste21}}]
Let $M,Y$ be metric spaces and let $\Theta\subseteq M$ be permeable. Then every continuous function $f\colon M\to \msr$,
which is intrinsically $L$-Lipschitz continuous on $E=M\setminus \Theta$, is intrinsically $L$-Lipschitz continuous on the whole of $M$.
\end{theorem*}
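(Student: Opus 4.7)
The plan is to reduce the global intrinsic $L$-Lipschitz estimate on $M$ to the assumed estimate on $E$ by a path-approximation argument: given two points of $M$ and a near-geodesic between them in $M$, use permeability to replace that path by one lying in $E$ of essentially the same length, apply the Lipschitz hypothesis on $E$, and close any remaining gap at points of $\Theta$ by continuity of $f$. The conceptual point is that permeability of $\Theta$ should force the intrinsic (path) metric on $E$ to coincide with the intrinsic metric on $M$ when restricted to $E\times E$, so that constraining connecting paths to avoid $\Theta$ costs no length.

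First I would treat the case $x,y\in E$. Fix $\varepsilon>0$ and choose a rectifiable curve $\gamma$ in $M$ from $x$ to $y$ with $\mathrm{length}(\gamma)\le d_M^{\mathrm{int}}(x,y)+\varepsilon$. Sample $\gamma$ at points $x=z_0,\dots,z_n=y$ of small mesh, and for each consecutive pair $z_i,z_{i+1}$ invoke permeability to produce a rectifiable path $\sigma_i$ lying in $E$ that joins $z_i$ to $z_{i+1}$ (after a tiny perturbation of the $z_i$ into $E$, if needed) and has length exceeding the piece of $\gamma$ between them by at most $\varepsilon/n$. The concatenation $\tilde\gamma=\sigma_0\ast\cdots\ast\sigma_{n-1}$ lies in $E$ and satisfies
\[
\mathrm{length}(\tilde\gamma)\le \mathrm{length}(\gamma)+\varepsilon\le d_M^{\mathrm{int}}(x,y)+2\varepsilon.
\]
The hypothesis on $E$ then gives $d_Y(f(x),f(y))\le L\cdot\mathrm{length}(\tilde\gamma)$; letting $\varepsilon\downarrow 0$ yields the bound for $x,y\in E$.

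For $x$ or $y$ lying in $\Theta$, I would approximate by sequences $x_n,y_n\in E$ with $x_n\to x$, $y_n\to y$, and $d_M^{\mathrm{int}}(x_n,y_n)\to d_M^{\mathrm{int}}(x,y)$; such sequences exist again by permeability, by perturbing the endpoints of a near-geodesic slightly into $E$. Applying the previous step to each pair $(x_n,y_n)$ and using the continuity of $f$ to pass to the limit on the left-hand side yields $d_Y(f(x),f(y))\le L\cdot d_M^{\mathrm{int}}(x,y)$, as required.

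The principal obstacle, and the only place where the geometric content of permeability is genuinely exploited, is the simultaneous control of length and endpoint matching in the replacement step: the partition of $\gamma$ must be fine enough that individual subarcs are short, yet permeability must deliver connecting paths in $E$ whose endpoints either match the chosen sample points exactly or are close enough that each junction mismatch can be absorbed via a further triangle inequality in $Y$ (for which one invokes the uniform continuity of $f$ on a suitable compact piece of $M$). The precise shape of this argument depends on whether Definition \ref{def:permeable} is formulated with continuous curves or with polygonal chains; the chain formulation would make the concatenation automatic and render the estimate above essentially immediate.
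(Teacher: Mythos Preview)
The paper does not prove this theorem; it is quoted from \cite{leoste21} and used as a black box. So there is no ``paper's own proof'' to compare against. I can, however, evaluate your argument on its merits, and there is a genuine gap.

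Your proof rests on the sentence ``invoke permeability to produce a rectifiable path $\sigma_i$ lying in $E$''. But look at Definition~\ref{def:permeable}: permeability of $\Theta$ means $\rho_M=\rho_M^\Theta$, where $\rho_M^\Theta$ is the infimum over paths $\gamma$ such that $\overline{\gamma([a,b])\cap\Theta}$ is at most \emph{countable}. Permeability does \emph{not} furnish paths that avoid $\Theta$; it furnishes paths that hit $\Theta$ in a set with countable closure. Consequently the step ``The hypothesis on $E$ then gives $d_Y(f(x),f(y))\le L\cdot\mathrm{length}(\tilde\gamma)$'' is unjustified: $\tilde\gamma$ need not lie in $E$, so you cannot simply invoke the intrinsic $L$-Lipschitz bound on $E$ along it. Your endpoint-perturbation trick for $x,y\in\Theta$ has the same problem: there is no reason permeable sets should have dense complement in general metric spaces, and in any case this is not what the definition provides.

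What the correct argument must do is exploit the countability. Given a near-optimal path $\gamma$ with $K:=\overline{\gamma([0,1])\cap\Theta}$ countable, one pulls this back to a closed countable subset of the parameter interval (after reducing to an arc if necessary), whose complement is a countable union of open intervals on each of which $\gamma$ stays in $E$. On each such interval the intrinsic Lipschitz hypothesis gives the desired bound between its endpoints, and continuity of $f$ lets you pass to the closed subintervals. The remaining work is to chain these estimates across the countable closed exceptional set; this is where one uses that a countable closed subset of $[0,1]$ is scattered (Cantor--Bendixson), typically via a transfinite induction on its rank or an equivalent ``rising sun'' argument showing the set $\{t:\ d_Y(f(\gamma(0)),f(\gamma(t)))\le L\,\ell(\gamma|_{[0,t]})\}$ is all of $[0,1]$. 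Your outline skips this entire mechanism, which is precisely the content of the theorem.
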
 

A priori it is not clear why one needs a condition on $\Theta$ besides having empty interior.
 It turns out, however, that in the case where $M=\R$,
the reverse conclusion in the above theorem, cf.~\cite[Theorem 23]{leoste21} also holds.

In the case of  $M=\R^2$ there is an example (see \cite[Proposition 26]{leoste21}) of an impermeable subset 
$\Theta$ such that every continuous function $f$ on $\R^2$ which is intrinsically Lipschitz continuous on $\R^2\setminus \Theta$
is also Lipschitz continuous on $\R^2$, but with a different Lipschitz constant.  

This example is not a manifold and not a closed subset of $\R^2$.  It turns out that it is considerably 
 more difficult 
 to construct counterexamples $\Theta$ that are closed (as subsets of $\R^d$) topological submanifolds. 
It is shown in \cite{leoste21}, that closed Lipschitz topological submanifolds 
are permeable.
In Theorem \ref{*thmintr}, using the function constructed
in Theorem \ref{thm:hoelder-ex}, we construct an example of an impermeable closed topological (even Hölder) submanifold of $\R^2$,
for which the conclusion of \cite[Theorem 15]{leoste21} does not hold.\\

The original motivation for the theory laid out in this article comes from the study of stochastic differential
equations (SDEs) whose coefficient functions are discontinuous along a manifold, but differentiable with bounded derivative elsewhere. 
Such SDEs are studied in \cite{sz2016b}, where a transform of the state space is devised so that the drift coefficient of the transformed SDE is continuous and intrinsically Lipschitz continuous. 
Using \cite[Lemma 3.6.]{sz2016b}, which is a special case of \cite[Theorem 15]{leoste21}, one can conclude that the drift of the transformed equation is Lipschitz continuous. This makes it possible to use classical results for proving existence and uniqueness of a solution, as well as convergence of numerical methods.
This ``transformation method'' from \cite{sz2016b} has widely been used since, see, e.g., \cite{sz2017c,sz2019,tmgly20,tmgly22}, and our  study here provides insights into the limitations and possible generalizations of that method applied to SDEs or related PDEs. \\

We conclude the introduction with an outline of the paper. 
Section \ref{sec:main-example} contains the `real functions'  results, where we first recall some definitions and notations in Subsection \ref{sec:notation} and show in Subsection \ref{sec:typical} that 
the set of continuous functions with permeable paths contains a dense $G_\delta$-set.
Then, in Subsection \ref{sec:hoelder-example}, we construct our  Hölder continuous example function with impermeable graph 
using nested sequences of chains of rectangles.
In Subsection \ref{*secHau}, we modify the earlier construction to obtain a still continuous function (but not Hölder continuous)
such that its points of intersection with every $f$ satisfying  $V(f)<1$ and $f(0)=0$ 
 has Hausdorff dimension 1.
 
 Section \ref{sec:metric-top} contains the application of the earlier results in metric topology.
 First  Subsection \ref{sec:pwlm} recalls the notions of intrinsic metric and intrinsically Lipschitz continuous functions in 
 general metric spaces as well as the concept of a permeable subset of a metric space, as originally introduced in 
 \cite{leoste21}. 
 In Subsection \ref{sec:graphs-paths} we show that a continuous real function has a permeable graph if and only if its 
 graph  is a permeable subset of $\R^2$. Finally, in Subsection \ref{sec:cantor-madness}, we present the construction of a continuous function
 $f\colon \R^2\to \R$ which is intrinsically Lipschitz continuous on $\R^2\setminus \Theta$, where $\Theta$ 
 is the graph of a Hölder continuous function $\theta\colon\R\to\R$, but  $f$  is not Lipschitz continuous 
 with respect to the Euclidean metric on $\R^2$.

\section{Main Examples}\label{sec:main-example}

\subsection{Definitions and notation}\label{sec:notation}
Before we start the construction of our example, we recall some concepts and notation:
Let $(M,d)$ denote a general metric space.
\begin{itemize}
\item For $x\in M$ and $r>0$, let $$B_r(x):=\left\{y\in M: d(x,y)<r\right\}$$
denote the open ball with radius $r$ and center $x$ in $M$.
\item For a subset $A\subseteq M$  we denote by $\overline A$ the topological closure and by $\partial A$ the topological boundary  of $A$. 

\item For a set $U\subseteq M$, let $|U|:=\sup\{d(x,y):x,y\in M\}$ denote  the  diameter of the set $U$.
\item 
Given $s>0$,  the  $s$-dimensional {\em Hausdorff measure} of the set $A\subseteq M$  is defined by
\[ \mathscr{H}^s(A):=\lim_{\varepsilon\to 0}\inf \Big\{\sum_{i=1}^\infty |U_i|^s: \forall i\geq 1:|U_i|<\varepsilon \text{ and }A\subseteq \bigcup_{i=1}^\infty U_i \Big\}. \] 

\item Let 

\begin{align*}
\hdim (A):&=\inf\big\{s:\mathscr{H}^s(A)=0\big\}\\
&=\inf\Big\{s:\exists C_s>0: \forall \varepsilon>0: \exists (U_i)_{i\geq 1}: \forall i\geq 1: |U_i|<\varepsilon, \\
&\quad\quad\quad\quad\quad\quad\quad\quad\quad\quad\quad\quad A\subseteq \bigcup_{i=1}^\infty U_i,\text{ and }\sum_{i=1}^\infty|U_i|^s<C_s\Big\},
\end{align*}

denote the {\em Hausdorff-dimension} of $A$.
\item For a function $f\colon [a,b]\to \R$ let $V_a^b(f)\in [0,\infty]$ denote the total variation of $f$ over the interval $[a,b]$, 
i.e. 
\[
V_a^b(f)=\sup\Big \{\sum_{k=1}^n |f(x_k)-f(x_{k-1})| : n\in \mathbb{N},a\le x_0<\ldots<x_n\le b  \Big \}\,.
\]  

\item For a function $f\colon [a,b]\to \R$ let $\ell_a^b(f)\in[0,\infty]$ denote the length of the graph of $f$ over the interval $[a,b]$, i.e. 
\begin{align*}
\ell_a^b(f)=\sup\Big\{\sum_{k=1}^n &\sqrt{|x_k-x_{k-1}|^2+ |f(x_k)-f(x_{k-1})|^2} \colon\\
& \qquad\qquad n\in \mathbb{N},a\le x_0<\ldots<x_n\le b  \Big \}\,.
\end{align*}

\item For $m\in \N$ and $1\leq i\leq m$, let $\mathrm{pr}_i\colon\R^m\to\R$ denote the projection onto the $i$th component.
\item For a mapping $\gamma\colon [a,b]\to \R^2$ let $\gamma_1:=\pr_1\circ\gamma,\gamma_2:=\pr_2\circ\gamma$ be the coordinate functions.
\item For a subset $A\subseteq \R^d$ we write $\conv(A)$ for the convex hull of $A$. 
\end{itemize}

\subsection{Typical continuous functions have permeable graphs}\label{sec:typical}

In this subsection we show that a {\em typical} continuous function $g\in C[0,1]$ has a finitely permeable  graph. ``Typical'' here means that the set of functions with finitely permeable graph contains a dense $G_\delta$ set. 
On $C[0,1]$ we use the topology induced by the $\sup$-norm, $||f||=\sup_{x\in[0,1]}|f(x)|$. 
The precise formulation is given in the next theorem.

\begin{theorem}\label{thm:typical}
There is a dense $G_\delta$ set $\mathcal{G}\subseteq C[0,1]$ such that for all $g\in \mathcal{G}$, for all $y\in \R$ and all $\delta>0$ there is a function $f\colon[0,1]\to\mathbb{R}$ with finitely many jumps such that $f(0)=y, V_0^1(f)<\delta$ and $\{t:f(t)=g(t)\}\subseteq \{0\}$.
\end{theorem}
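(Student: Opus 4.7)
The plan is to exhibit $\mathcal{G}$ as a countable intersection of open dense sets and invoke the Baire category theorem. Fix an enumeration $(y_n,\delta_n)_{n\in\N}$ of a countable dense subset of $\R\times(0,\infty)$, e.g.\ $\Q\times(\Q\cap(0,\infty))$. For each $n$, let $U_n\subseteq C[0,1]$ be the set of those $g$ for which there exist a partition $0=t_0<t_1<\dots<t_m=1$ and values $v_0,\dots,v_{m-1}\in\R$ such that the right-continuous piecewise constant function $f$ given by $f(0)=y_n$ and $f(t)=v_i$ for $t\in(t_i,t_{i+1}]$ satisfies $V_0^1(f)<\delta_n$ and, additionally, $g(t_i)\ne v_i$ for every $i=0,\dots,m-1$ (in particular $g(0)\ne v_0$, so $f$ has an actual jump at $0$). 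Then $\mathcal{G}:=\bigcap_n U_n$ will be the desired dense $G_\delta$ set.

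For openness of $U_n$: if $g\in U_n$ is witnessed by such $f$, the condition $g(t_i)\ne v_i$ combined with the continuity of $g$ and compactness of $[t_i,t_{i+1}]$ guarantees $\min_{t\in[t_i,t_{i+1}]}|g(t)-v_i|>0$ on each piece. Taking the minimum over the finitely many pieces yields some $\eta>0$, and any $g'$ with $\|g-g'\|_\infty<\eta$ still satisfies $f\ne g'$ on $[0,1]$ except possibly at $0$. Hence the same $f$ witnesses $g'\in U_n$, so $U_n$ is open.

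For density of $U_n$: I would show every polynomial lies in $U_n$; since polynomials are dense in $C[0,1]$ by Weierstrass, this suffices. Given a polynomial $p$ and $(y_n,\delta_n)$, choose two values $v_0,v_1$ close to $y_n$ with $|v_0-y_n|+|v_1-v_0|$ arbitrarily small and $\{v_0,v_1\}\cap\{p(0)\}=\emptyset$. Both sets $p^{-1}(v_0)$ and $p^{-1}(v_1)$ are finite (bounded by $\deg p$), and for generic choice of $v_0,v_1$ they are disjoint. Enumerate their union as $0<r_1<\dots<r_N<1$, and define $f$ piecewise constant between consecutive $r_j$'s, alternating between $v_0$ and $v_1$ such that on the piece containing $r_j$ the value of $f$ is whichever of $v_0,v_1$ is \emph{not} equal to $p(r_j)$. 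The total variation is at most $N\cdot|v_0-v_1|\le 2\deg(p)\cdot|v_0-v_1|$, which can be made smaller than $\delta_n$. Degenerate cases ($p\equiv y_n$, or $r_j\in\{0,1\}$) are handled by minor adjustments.

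Finally, once $\mathcal{G}=\bigcap_n U_n$ is built, for arbitrary $y\in\R,\delta>0$ choose $n$ with $|y-y_n|+\delta_n<\delta$; if $f_n$ witnesses $g\in U_n$, replacing $f_n(0)=y_n$ by $f(0):=y$ (keeping $f=f_n$ on $(0,1]$) changes the jump at $0$ from $|v_0-y_n|$ to $|v_0-y|$ and so alters the total variation by at most $|y-y_n|$, yielding $V_0^1(f)<\delta_n+|y-y_n|<\delta$ with the same zero set $\{t:f(t)=g(t)\}\subseteq\{0\}$. The main delicacy I anticipate is in the openness step: one has to be careful to require a genuine jump at $t=0$ and the separation condition $g(t_i)\ne v_i$ at each jump, because otherwise $g$ could oscillate back to the value $v_i$ arbitrarily close to a jump point and a small perturbation of $g$ would create an extra intersection on the half-open piece. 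Building these constraints into the definition of $U_n$ (rather than taking the naive definition) is what makes openness genuinely hold.
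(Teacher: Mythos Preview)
Your approach is genuinely different from the paper's. The paper does not try to show that ``admitting a good step-function witness'' is an open condition; instead it takes $\mathcal{G}_k:=\bigcup_n B_{\rho_{n,k}}(g_n)$ where $(g_n)$ enumerates a dense family of piecewise-linear nowhere-constant functions and $\rho_{n,k}=\frac{1}{2k(v_n+1)}$ depends on the number $v_n$ of linear pieces. Openness and density are then automatic, and for $g$ in a small ball around $g_n$ one constructs $f$ by hugging just outside the $\rho_{n,k}$-tube around $g_n$, jumping $\pm 2\rho_{n,k}$ each time $g_n$ changes direction. Your route is more intrinsic but requires the delicate openness and density verifications that the paper sidesteps.

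There is a real gap in your proposal. First, your stated definition of $U_n$ omits the condition $\{t:f(t)=g(t)\}\subseteq\{0\}$; with only $V_0^1(f)<\delta_n$ and $g(t_i)\ne v_i$ required, \emph{every} $g$ lies in $U_n$ (take $m=1$ and any $v_0\ne g(0)$ close to $y_n$), so the intersection is all of $C[0,1]$. You clearly intend the non-intersection condition to be part of $U_n$; say so explicitly. Second, and more substantively, your density construction does not always produce a witness satisfying the separation condition $p(t_i)\ne v_i$. Your rule assigns to the piece $(r_i,r_{i+1}]$ the element of $\{v_0,v_1\}$ different from $p(r_{i+1})$; the left-endpoint separation then forces $p(r_i)=p(r_{i+1})$. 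But when $p$ lies strictly between $v_0$ and $v_1$ on $(r_i,r_{i+1})$—for instance $p(t)=t$, $v_0=0.3$, $v_1=0.7$, so $r_1=0.3$, $r_2=0.7$—the endpoint values differ, and your witness violates the very condition you built into $U_n$ to secure openness. A clean repair: use a \emph{single} level $v$ close to $y_n$ with $v\notin\{p(0),p(1)\}$, partition $[0,1]$ by the finite set $p^{-1}(v)$, and on each resulting closed subinterval set $f=v-\varepsilon$ where $p\ge v$ and $f=v+\varepsilon$ where $p\le v$; then $|p-f|\ge\varepsilon$ on every closed piece, and $V_0^1(f)\le |v-y_n|+\varepsilon+2N\varepsilon$ with $N\le\deg p$.
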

\begin{proof}

Let $(g_n)_{n\geq 1}$ be an enumeration of a dense set $D$ of piecewise linear functions which are nonconstant on any interval. For all $n$  let $v_n$ be the minimal number of  intervals on which $g_n$ is linear and which partition $[0,1]$. Then define 
 the numbers $\rho_{n,k}:=\frac{1}{2k(v_n+1)}$ and the sets
\[\mathcal{G}_k:=\bigcup_{n=0}^\infty B_{\rho_{n,k}}(g_n),\quad \mathcal{G}=\bigcap_{k=1}^\infty \mathcal{G}_k.\]
Clearly, the sets
 $\mathcal{G}_k$ are open and $\mathcal{G}$ is a dense $G_\delta$ set in $C[0,1]$. Let $g\in \mathcal{G}$ and $\delta>0$ be given. Choose $k>0$ such that $\frac{1}{k}<\delta$. Then there is some $n_k$ and a function $g_{n_k}\in D$ such that $g\in B_{\rho_{n_k,k}}(g_{n_k})$. 

Take now $y\in \R$ arbitrary. We will  construct a function $f$ with $V_0^1(f)<\delta$ and $\{t:f(t)=g(t)\}\subseteq \{0\}$.

\begin{figure}
\begin{center}
\begingroup%
  \makeatletter%
  \providecommand\color[2][]{%
    \errmessage{(Inkscape) Color is used for the text in Inkscape, but the package 'color.sty' is not loaded}%
    \renewcommand\color[2][]{}%
  }%
  \providecommand\transparent[1]{%
    \errmessage{(Inkscape) Transparency is used (non-zero) for the text in Inkscape, but the package 'transparent.sty' is not loaded}%
    \renewcommand\transparent[1]{}%
  }%
  \providecommand\rotatebox[2]{#2}%
  \newcommand*\fsize{\dimexpr\f@size pt\relax}%
  \newcommand*\lineheight[1]{\fontsize{\fsize}{#1\fsize}\selectfont}%
  \ifx\svgwidth\undefined%
    \setlength{\unitlength}{331.37678215bp}%
    \ifx\svgscale\undefined%
      \relax%
    \else%
      \setlength{\unitlength}{\unitlength * \real{\svgscale}}%
    \fi%
  \else%
    \setlength{\unitlength}{\svgwidth}%
  \fi%
  \global\let\svgwidth\undefined%
  \global\let\svgscale\undefined%
  \makeatother%
  \begin{picture}(1,0.60963235)%
    \lineheight{1}%
    \setlength\tabcolsep{0pt}%
    \put(0,0){\includegraphics[width=\unitlength,page=1]{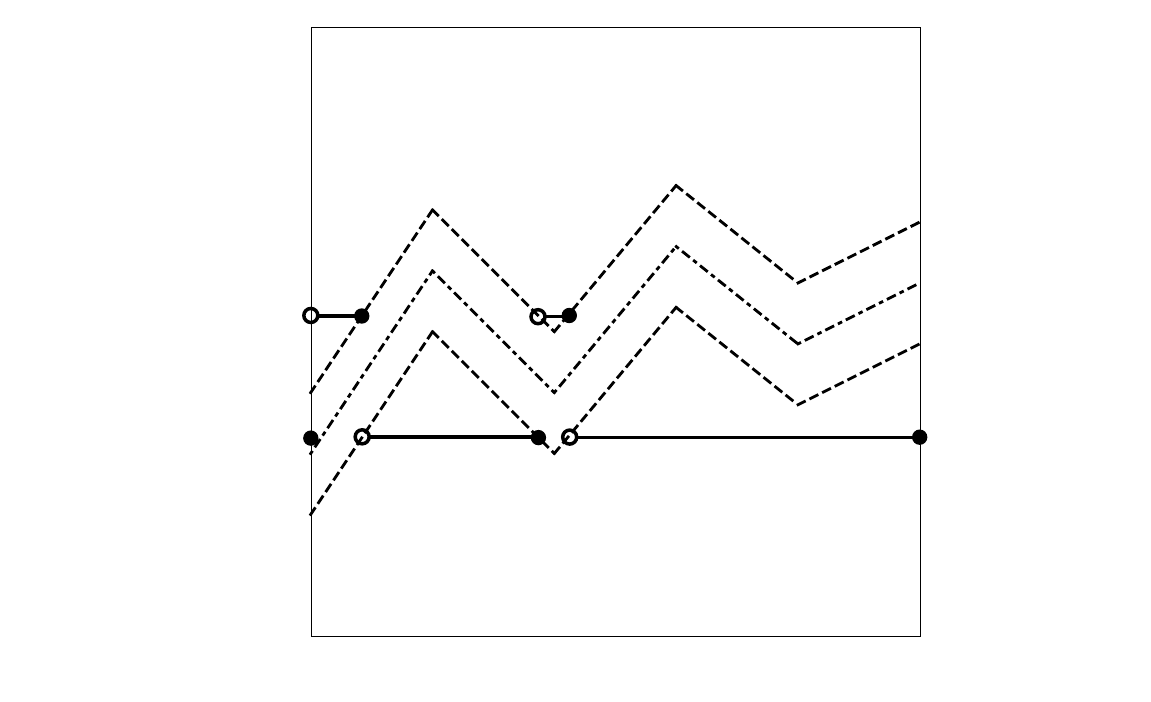}}%
    \put(0.25637832,0.20618146){\makebox(0,0)[rt]{\lineheight{1.25}\smash{\begin{tabular}[t]{r}$y$\end{tabular}}}}%
    \put(0.26004787,0.32988554){\makebox(0,0)[rt]{\lineheight{1.25}\smash{\begin{tabular}[t]{r}$y+2\rho_{n_k,k}$\end{tabular}}}}%
    \put(0,0){\includegraphics[width=\unitlength,page=2]{typical-neu.pdf}}%
    \put(0.27346434,0.02180333){\makebox(0,0)[t]{\lineheight{1.25}\smash{\begin{tabular}[t]{c}$0$\end{tabular}}}}%
    \put(0.80160533,0.01899458){\makebox(0,0)[t]{\lineheight{1.25}\smash{\begin{tabular}[t]{c}$1$\end{tabular}}}}%
    \put(0.26178286,0.26135948){\makebox(0,0)[rt]{\lineheight{1.25}\smash{\begin{tabular}[t]{r}$y+\rho_{n_k,k}$\end{tabular}}}}%
    \put(0.80948952,0.35305147){\makebox(0,0)[lt]{\lineheight{1.25}\smash{\begin{tabular}[t]{l}$g_{n_k}$\end{tabular}}}}%
    \put(0,0){\includegraphics[width=\unitlength,page=3]{typical-neu.pdf}}%
    \put(0.33728494,0.02160147){\makebox(0,0)[t]{\lineheight{1.25}\smash{\begin{tabular}[t]{c}$t_{j-1}$\end{tabular}}}}%
    \put(0.47439949,0.02221425){\makebox(0,0)[t]{\lineheight{1.25}\smash{\begin{tabular}[t]{c}$t_{j}$\end{tabular}}}}%
    \put(0,0){\includegraphics[width=\unitlength,page=4]{typical-neu.pdf}}%
  \end{picture}%
\endgroup%
\end{center}
\label{fig1}
\caption{An open ball of radius $\rho_{n_{k},k}$ around a piecewise linear continuous function $g_{n_{k}}$ together 
with a piecewise constant function which avoids the ball.}
\end{figure}

This intersection set is nonempty only in the case when $g(0)=y$. Let $t_0:=0$, 
$$f_0 :=\begin{cases}
y, & \text{ if }y\notin[g_{n_k}(0)-\rho_{n_k,k},g_{n_k}(0)+\rho_{n_k,k}], \\
y+2\rho_{n_k,k}, & \text{ if }y\in[g_{n_k}(0),g_{n_k}(0)+\rho_{n_k,k}] ,\\
y-2\rho_{n_k,k}, & \text{ if }y\in[g_{n_k}(0)-\rho_{n_k,k},g_{n_k}(0)),
\end{cases}$$ 
and, inductively, for $j=1,...,v_{n_{k}}+1$   define 
$$t_j:=\begin{cases}{\renewcommand{\arraystretch}{0.6}\scriptsize \begin{array}{l}\inf\{t>t_{j-1}:f_{j-1}=g_{n_k}(t)+\rho_{n_k,k}
\\ \quad\quad\quad\quad\;\;\, \text{ or } f_{j-1}=g_{n_k}(t)-\rho_{n_k,k}\},\end{array}}
\quad\quad&
\text{if defined and }
t_{j-1}\neq 1,
\\
\quad 1, &\text{otherwise}.
\end{cases}$$
Observe  that $t_{v_{n_{k}}+1}=1$ and it may happen that 
$t_{j}=1$ for $j$ less than $v_{n_{k}}+1$.
For all $j$ with $t_{j}<1$ we also put
$$f_j:=\begin{cases}
f_{j-1}+2\rho_{n_k,k}, & \text{ if } g_{n_k} \text{ decreases at }t_j, \\
f_{j-1}-2\rho_{n_k,k}, &\text{ if } g_{n_k} \text{ increases at }t_j, \\
f_{j-1}, & \text{ if } g_{n_k} \text{ has a local extremum at }t_j.
\end{cases}$$
Assume that $0<t_1<\dotsb<t_{v_{n_k}}<t_{v_{n_k}+1}=1$, all other cases are similar but easier to treat. We define $f$ so that it is piecewise constant on the intervals $]t_{j-1},t_j]$, $j=1,\dotsc,v_{n_k}+1 $ with
values $f(t)=f_{j-1}$ and $f(0)=y$.

As a locally constant function, $f$ obtains its variation only by means of its jumps, which may occur 
at $t_0,\dotsc,t_{v_{n_k}}$ and have jump heights at most $2\rho_{n_k,k}$. Hence,
$V_0^1(f)\leq \sum_{j=0}^{v_{n_k}}2\rho_{n_k,k}= 2(v_{n_k}+1)\rho_{n_k,k}=\frac{1}{k}<\delta$ by the definition of $\rho_{n_k,k}$.
\end{proof}

\begin{corollary}
There is a dense $G_\delta$ set $\mathcal{G}\subseteq C[0,1]$ such that for all $g\in \mathcal{G}$, $g$ has a permeable graph.
\end{corollary}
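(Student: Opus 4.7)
The plan is to take $\mathcal{G}$ to be exactly the same dense $G_\delta$ set produced by Theorem \ref{thm:typical} and show that every $g\in \mathcal{G}$ satisfies Definition \ref{def:permeable-graph}. The only mismatch is that Theorem \ref{thm:typical} gives a function $f_0$ with $f_0(0)=y$ and essentially no control at $t=1$, whereas the definition of permeable graph requires $f(a)=f(b)=y$ on $[a,b]=[0,1]$. So the work consists of a small surgery near $t=1$ together with a bookkeeping of the total variation.

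Fix $g\in\mathcal{G}$, $y\in\R$, $\delta>0$. First I would apply Theorem \ref{thm:typical} with $\delta/2$ in place of $\delta$ to obtain a function $f_0\colon[0,1]\to\R$ with finitely many jumps such that $f_0(0)=y$, $V_0^1(f_0)<\delta/2$ and $\{t\in[0,1]:f_0(t)=g(t)\}\subseteq\{0\}$. Then I would define
\[
f(t):=\begin{cases} f_0(t), & t\in[0,1),\\ y, & t=1,\end{cases}
\]
so that $f(0)=f(1)=y$ as required.

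Next I would estimate the total variation. Since $f_0(0)=y$, the elementary bound $|f_0(1)-y|\le V_0^1(f_0)<\delta/2$ holds. Modifying $f_0$ only at the single point $t=1$ can increase the variation by at most $|f_0(1)-y|$; indeed, for any partition $0=x_0<\dots<x_n=1$ the triangle inequality gives
\[
\sum_{k=1}^n|f(x_k)-f(x_{k-1})| \le V_0^1(f_0)+|f_0(1)-y| < \tfrac{\delta}{2}+\tfrac{\delta}{2}=\delta,
\]
so $V_0^1(f)<\delta$.

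Finally, I would check the intersection set: on $[0,1)$ we have $f=f_0$, so the intersection with $\{t:f(t)=g(t)\}$ is contained in $\{0\}$, and possibly $t=1$ is added if $g(1)=y$. In any case $\{t\in[0,1]:f(t)=g(t)\}\subseteq\{0,1\}$ is finite and thus equal to its closure, which is certainly at most countable. Hence $g$ has a permeable graph. The only thing one has to be mildly careful about is the variation bookkeeping for the boundary modification, but this is routine; there is no genuine obstacle.
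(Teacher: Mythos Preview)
Your argument is correct and essentially identical to the paper's own proof: both take the dense $G_\delta$ set from Theorem~\ref{thm:typical}, apply it with $\delta/2$, redefine the resulting function at $t=1$ to equal $y$, and bound the extra variation by $|f_0(1)-y|\le V_0^1(f_0)<\delta/2$. The bookkeeping and conclusion match the paper line for line.
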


\begin{proof}
Let $\mathcal{G}$ be as in the proof of Theorem \ref{thm:typical}  and  let 
$g\in \mathcal{G}$. Let $y\in \R$ and $\delta>0$. Then there exists
$\tilde f\colon [0,1]\to \R$ with $\tilde f(0)=y$, and $V_0^1(\tilde f)<\frac{\delta}{2}$ so that 
$\{t:\tilde f(t)=g(t)\}\subseteq \{0\}$. Define
\[
f(t):=\begin{cases}
\tilde f(t), & \text{ if }t<1,\\
y, & \text{ if }t=1.
\end{cases}
\]
Then $V_0^1(f)\le V_0^1(\tilde f)+|\tilde f(1)-y|\le 2V_0^1(\tilde f)< \delta$
and $\{t:\tilde f(t)=g(t)\}\subseteq \{0,1\}$.
\end{proof}

\subsection{Continuous functions with impermeable graph}\label{sec:hoelder-example}

In this subsection we are going to construct a function $g_H\colon [0,1]\to \R$ with an impermeable graph. This function will be Hölder continuous.
In Section \ref{*secHau} we will modify this  construction, which will yield 
a  continuous, non Hölder continuous function $g_C$, and  with the property that the intersection set with every $f$ satisfying $f(0)=0$ and $V_0^1(f)<1$ has Hausdorff dimension equal to 1.  At a heuristic level it is clear that the price we need to pay to obtain larger intersection sets is that our function $g_C$ should become wilder.

Both functions $g_H$ and $g_C$ will be constructed as a limit of a nested sequence of chains of rectangles, 
a concept that we are going to  define now.

\begin{definition}\label{def:nested-collection}
\phantom{a}
\begin{enumerate}
\item Let $(R_{n})_{n=1}^N$ be a finite collection  of axes-parallel rectangles contained in $[0,1]\times \R$, 
 with common width $w$ and height $h$. Let $(x_n,y_n)$ be the lower left corner of $R_n=[x_{n},x_{n}+w]\times [y_{n},y_{n}+h]$.
 We call $(R_{n})_{n=1}^N$ a {\em  connected chain of rectangles}, if 
 \begin{enumerate}
 \item $x_1=0$, $x_N+w=1$;
 \item $x_{n+1}=x_{n}+w$ for $n=1,...,N-1$;
 \item $|y_{n+1}-y_{n}|\le h$  for $n=1,...,N-1$.
\end{enumerate}
\item\label{it:nested-collection-sequence} Let $\{R_{n,k}\colon k\in \N_0, n=1,\ldots,N_k\}$ be a finite collection  of  rectangles contained in $[0,1]\times \R$. 
Let $(x_{n,k},y_{n,k})$ be the lower left corner of $R_{n,k}=[x_{n,k},x_{n,k}+w_k]\times [y_{n,k},y_{n,k}+h_k]$.
We call $(R_{n,k})$ {\em a nested sequence of chains of rectangles}, if 
 \begin{enumerate}
\item for every $k$, $(R_{n,k})_{n=1}^{N_k}$ is a connected chain of rectangles  with corresponding widths $w_k$ and heights $h_k$; 
\item $N_{k+1}=d_{k+1} N_k$ for some $d_{k+1}\in\N\setminus\{1\}$;
\item\label{it:nested-collection-sequence-3} $y_{j,k}\le y_{(j-1) d_{k+1}+n,k+1}\le y_{j,k}+h_k-h_{k+1}$ for all $j\in \{1,\ldots,{N_k}\}$, $n\in 1,\ldots,d_{k+1}$, $k\in\N_0$.
\end{enumerate}
\end{enumerate}
See Figures \ref{*fignested1} and \ref{*figrectangles}.
\end{definition}
Note that, in a  nested sequence of chains of rectangles $\{R_{n,k}\colon k\in \N_0, n=1,\ldots,N_k\}$,
every rectangle $R_{n,k+1}$ is  contained in precisely one rectangle $R_{m,k}$.

\begin{figure}[h]
\begin{center}
\begingroup%
  \makeatletter%
  \providecommand\color[2][]{%
    \errmessage{(Inkscape) Color is used for the text in Inkscape, but the package 'color.sty' is not loaded}%
    \renewcommand\color[2][]{}%
  }%
  \providecommand\transparent[1]{%
    \errmessage{(Inkscape) Transparency is used (non-zero) for the text in Inkscape, but the package 'transparent.sty' is not loaded}%
    \renewcommand\transparent[1]{}%
  }%
  \providecommand\rotatebox[2]{#2}%
  \newcommand*\fsize{\dimexpr\f@size pt\relax}%
  \newcommand*\lineheight[1]{\fontsize{\fsize}{#1\fsize}\selectfont}%
  \ifx\svgwidth\undefined%
    \setlength{\unitlength}{223.89598871bp}%
    \ifx\svgscale\undefined%
      \relax%
    \else%
      \setlength{\unitlength}{\unitlength * \real{\svgscale}}%
    \fi%
  \else%
    \setlength{\unitlength}{\svgwidth}%
  \fi%
  \global\let\svgwidth\undefined%
  \global\let\svgscale\undefined%
  \makeatother%
  \begin{picture}(1,0.74885615)%
    \lineheight{1}%
    \setlength\tabcolsep{0pt}%
    \put(0,0){\includegraphics[width=\unitlength,page=1]{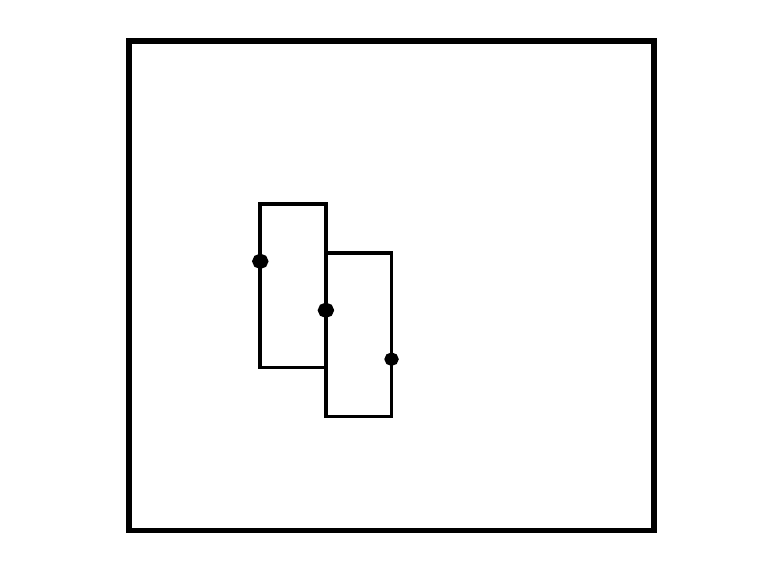}}%
    \put(0.2760027,0.40483035){\makebox(0,0)[t]{\lineheight{1.25}\smash{\begin{tabular}[t]{c}$a_{n,k}$\end{tabular}}}}%
    \put(0.57578734,0.27892943){\makebox(0,0)[t]{\lineheight{1.25}\smash{\begin{tabular}[t]{c}$b_{n+1,k}$\end{tabular}}}}%
    \put(0.55530569,0.35864455){\makebox(0,0)[t]{\lineheight{1.25}\smash{\begin{tabular}[t]{c}$b_{n,k}=a_{n+1,k}$\end{tabular}}}}%
    \put(0,0){\includegraphics[width=\unitlength,page=2]{nested1-note.pdf}}%
    \put(0.14293165,0.26961065){\makebox(0,0)[rt]{\lineheight{1.25}\smash{\begin{tabular}[t]{r}$y_{n,k}$\end{tabular}}}}%
    \put(0.14251992,0.47963934){\makebox(0,0)[rt]{\lineheight{1.25}\smash{\begin{tabular}[t]{r}$y_{n,k}+h_k$\end{tabular}}}}%
    \put(0.3540114,0.02320473){\makebox(0,0)[t]{\lineheight{1.25}\smash{\begin{tabular}[t]{c}$x_{n,k}$\end{tabular}}}}%
    \put(0.41835049,0.02359899){\makebox(0,0)[lt]{\lineheight{1.25}\smash{\begin{tabular}[t]{l}$x_{n,k}+w_k$\end{tabular}}}}%
    \put(0,0){\includegraphics[width=\unitlength,page=3]{nested1-note.pdf}}%
  \end{picture}%
\endgroup%
\end{center}
\caption{Two rectangles in a chain of rectangles $(R_{n,k})$.}\label{*fignested1}
\end{figure}

Nested sequences of chains of rectangles can be used to represent continuous functions:

\begin{proposition}\label{lem:g-rect}
 \begin{enumerate}
 \item\label{it:prop1-expansion} Let $\{R_{n,k}\colon k\in \N_0, n=1,\ldots,N_k\}$ be a nested sequence of chains of rectangles.
 Then for every $t\in [0,1]$ one can select a sequence $(m(k,t))_{k=0}^\infty$ such that 
 $t\in [m(k,t)w_k,(m(k,t)+1)w_k]$ for all $k\in \N_0$. The sequence is unique for all but countably 
 many $t$.
\item \label{it:prop1-g-to-rect}If $g\colon [0,1]\to \R$ is a continuous function, and $(N_k)_{k=1}^\infty$ is a sequence of natural numbers with $N_{k+1}/N_k\in \N\setminus \{1\}$ then there exists a  nested sequence of chains of rectangles $\{R_{n,k}\colon k\in \N_0, n=1,\ldots,N_k\}$ with corresponding sequence of heights 
$(h_k)_{k=0}^\infty$, such that $\lim_{k\to \infty}h_k=0$ and 
\[
g(t)=\lim_{k\to \infty} y_{m(k,t),k} 
\]
for all $t\in [0,1]$. Moreover, $y_{m(k,t),k}\le  g(t) \le y_{m(k,t),k}+h_k$ for every $t\in [0,1]$ and $k\in \N_0$. 
\item \label{it:prop1-rect-to-g}If $\{R_{n,k}\colon k\in \N_0, n=1,\ldots,N_k\}$ is a  nested sequence of connected chains of rectangles  with a corresponding sequence of heights $(h_k)_{k=0}^\infty$ such that $\lim_{k\to \infty}h_k=0$, 
then 
\begin{equation}\label{*gteq}
g(t):=\lim_{k\to \infty} y_{m(k,t),k} 
\end{equation}
 defines a continuous function $g\colon [0,1]\to \R$ and
$y_{m(k,t),k}\le  g(t) \le y_{m(k,t),k}+h_k$ for every $t\in [0,1]$ and $k\in \N_0$, and
\[
\operatorname{graph}(g)=\bigcap_{k=0}^\infty\bigcup_{n=1}^{N_k} R_{n,k}\,.
\]
\end{enumerate}
\end{proposition}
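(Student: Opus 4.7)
The plan is to prove the three parts in order, each building on the previous.

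Part~(\ref{it:prop1-expansion}) is essentially combinatorial. Conditions (a) and (b) in Definition~\ref{def:nested-collection}~(1) force $w_k = 1/N_k$ and $x_{n,k} = (n-1)w_k$, so the closed intervals $\{[x_{n,k}, x_{n,k}+w_k]\}_{n=1}^{N_k}$ tile $[0,1]$ with overlap only at endpoints. Every $t \in [0,1]$ lies in one or two such intervals, and in two only when $t = n w_k$ for some $n \in \{1,\ldots,N_k-1\}$. The union of these endpoints over all $k$ is countable; outside this set the sequence $(m(k,t))_k$ is uniquely determined.

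For part~(\ref{it:prop1-g-to-rect}), I would set $w_k := 1/N_k$ and $I_{n,k} := [(n-1)w_k, nw_k]$, and define the common heights by $h_k := \max_n \operatorname{osc}_{I_{n,k}}(g)$. Uniform continuity of $g$ gives $h_k \to 0$, and the sequence is non-increasing because the level-$(k{+}1)$ partition refines the level-$k$ one. The corners $y_{n,k}$ are built inductively in $k$: set $y_{n,0} := \min_{I_{n,0}} g$; given $y_{j,k}$, for every $n'$ with $I_{n',k+1} \subseteq I_{j,k}$ choose $y_{n',k+1}$ in the interval with lower endpoint $\max(\max_{I_{n',k+1}} g - h_{k+1},\, y_{j,k})$ and upper endpoint $\min(\min_{I_{n',k+1}} g,\, y_{j,k}+h_k-h_{k+1})$. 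This interval is non-empty by checking the four cases of $\max$ vs.\ $\min$, using $h_{k+1} \ge \operatorname{osc}_{I_{n',k+1}} g$, $h_{k+1} \le h_k$, and the inductive statement that $R_{j,k}$ contains the graph of $g|_{I_{j,k}}$. By construction each $R_{n,k}$ contains the graph over $I_{n,k}$, yielding $y_{m(k,t),k} \le g(t) \le y_{m(k,t),k}+h_k$ and hence $g(t) = \lim_k y_{m(k,t),k}$. The nesting condition~(\ref{it:nested-collection-sequence-3}) is built into the choice, and connectedness $|y_{n+1,k} - y_{n,k}| \le h_k$ holds because $g(x_{n+1,k})$ lies in the $y$-interval of both $R_{n,k}$ and $R_{n+1,k}$.

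For part~(\ref{it:prop1-rect-to-g}), define $g$ by (\ref{*gteq}). For every $t$, pick a valid sequence $(m(k,t))_k$ from part~(\ref{it:prop1-expansion}). The nesting condition~(\ref{it:nested-collection-sequence-3}) yields a chain of nested intervals $[y_{m(k,t),k},\,y_{m(k,t),k}+h_k]$ of vanishing length, so the limit exists and satisfies $y_{m(k,t),k} \le g(t) \le y_{m(k,t),k}+h_k$. For $t$ in the countable exceptional set, two valid sequences differ at each level by adjacent indices, and the connectedness condition (c) in Definition~\ref{def:nested-collection}~(1) forces the corresponding $y$-values to differ by at most $h_k$, so the limit is independent of the choice. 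Continuity follows because for $|t-s| < w_k$ the indices $m(k,t)$ and $m(k,s)$ are equal or adjacent, and the triangle inequality combined with the containment $y_{m(k,\cdot),k} \le g(\cdot) \le y_{m(k,\cdot),k}+h_k$ gives $|g(t)-g(s)| \le 3h_k$. For the graph identity, $\operatorname{graph}(g) \subseteq \bigcap_k \bigcup_n R_{n,k}$ is immediate from $(t, g(t)) \in R_{m(k,t),k}$; conversely, any $(t,y)$ in the intersection lies in some $R_{n,k}$ with $n$ a valid choice of $m(k,t)$, forcing $|y - y_{m(k,t),k}| \le h_k$ and hence $y = g(t)$ in the limit.

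The main technical obstacles are verifying non-emptiness of the admissible interval in the inductive construction of part~(\ref{it:prop1-g-to-rect}), and in part~(\ref{it:prop1-rect-to-g}) the careful handling of the countable exceptional set of $t$ where $m(k,t)$ is ambiguous, to ensure that $g$ is both well-defined and continuous.
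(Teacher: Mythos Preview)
Your proposal is correct and follows the natural approach; the paper in fact leaves this proof to the reader, and your argument supplies exactly the kind of details one would expect. The only minor deviation from the authors' (excluded) sketch is in part~(\ref{it:prop1-g-to-rect}): they take $h_k$ to be \emph{twice} the maximal oscillation and use the simpler formula $y_{n,k}:=\min\big(\min_{F_{n,k}} g,\; y_{m,k-1}+h_{k-1}-h_k\big)$, whereas you take $h_k$ equal to the maximal oscillation and compensate with a more careful inductive choice of $y_{n,k}$---both work.
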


\begin{proof} The proof is left to the reader.
\exclude{
\ref{it:prop1-expansion}. Let $F_{n,k}$ be the projection of $R_{n,k}$ to the $x$-axis, i.e., $F_{n,k}=[x_{n,k},x_{n,k}+w_k]$.

Let $t\in [0,1]$. Since $[0,1]=\bigcup_{n=1}^{N_k} F_{n,k}$ for every $k\in \N_0$, there exists 
$m(k,t) \in\{1,\ldots,N_k\}$ with $t\in F_{n,k}$. This $m(k,t)$ is unique iff $t\notin \{n w_k\colon n\in \{1,\ldots,N-1\}\}$. Since $\bigcup_{k\in \N_0} \{n w_k\colon n\in \{1,\ldots,N-1\}$ is countable as the countable union of finite sets, $(m(k,t))_{k=0}^\infty$ is uniquely defined for all but countably many 
$t\in [0,1]$.

\noindent \ref{it:prop1-g-to-rect}. 
For $k\in \N_0$ set 
$h_k:=2 \max_{n\in\{1,\ldots,N\}}\max_{s,t\in F_{n,k}} |g(t)-g(s)|$.
It is not hard to see that $(h_k)_{k\in \N_0}$ is a non-increasing sequence in $[0,\infty)$. 

Since $g$ is continuous on a compact interval, it is uniformly 
continuous.  
Thus for every $\varepsilon>0$ there exists $k$ such that 
\[
\forall s,t\in [0,1]\colon |s-t|<w_k \Rightarrow |g(s)-g(t)|<\frac{\varepsilon}{2}
\]
(Note that $\lim_{k\to \infty} w_k=\lim_{k\to \infty} \frac{1}{N_k}=0$), and in particular,
\[h_k=2 \max_{n\in\{1,\ldots,N\}}\max_{s,t\in F_{n,k}} |g(t)-g((s)|<\varepsilon\,.\] 
Therefore,
$\lim_{k\to \infty}h_k=0$.

There is some choice for the construction of the rectangles $R_{n,k}$, respectively the 
second coordinate of the left lower point $y_{n,k}$. One possible choice
is $y_ {n,k}:=\min\big(\min_{t\in F_{n,k}} g(t),y_{m,k-1}+h_{k-1}-h_k\big)$. It is not hard to see that the sequence of connected chains
of rectangles constructed this way is nested.

\noindent \ref{it:prop1-rect-to-g}.  
By construction, $\bigcap_{k=0}^\infty\bigcup_{n=1}^{N_k} R_{n,k}$ is a compact set,
and due to  property \ref{it:nested-collection-sequence-3} of Definition \ref{def:nested-collection}, and $\lim_{k\to \infty} h_k=0$, for every $t\in [0,1]$, the set $\{(x,y)\colon x=t\}\cap \bigcap_{k=0}^\infty\bigcup_{n=1}^{N_k} R_{n,k}$
contains precisely one point. Therefore, $\bigcap_{k=0}^\infty\bigcup_{n=1}^{N_k} R_{n,k}$ is the graph of a continuous function $g$. It is obvious, that 
$g=\lim_{k\to \infty} y_{m(k,t),k} $.
}
\end{proof}

\begin{figure}[h]
\begin{center}
\includegraphics[width=3cm]{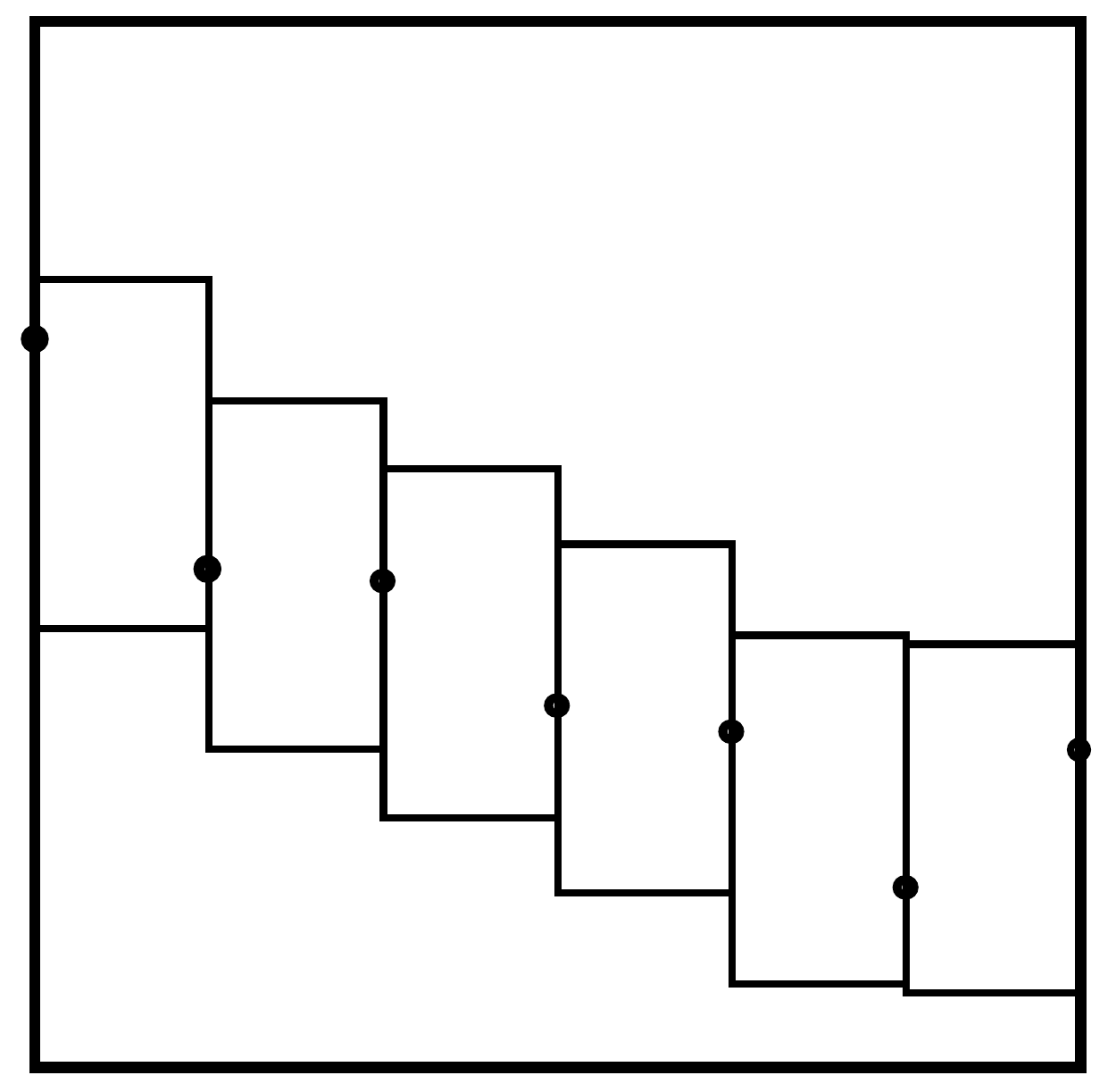}
\includegraphics[width=3cm]{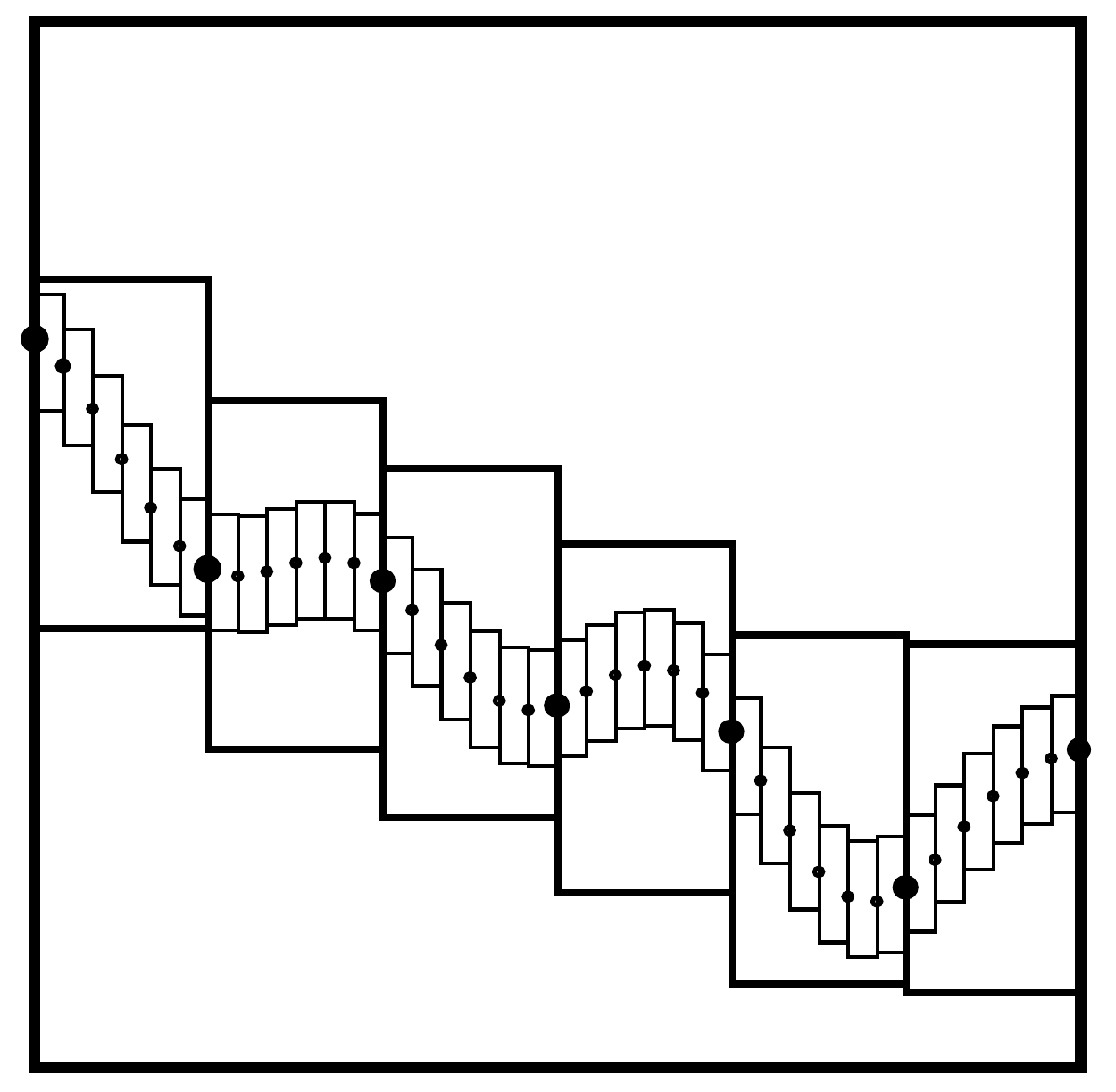}
\includegraphics[width=3cm]{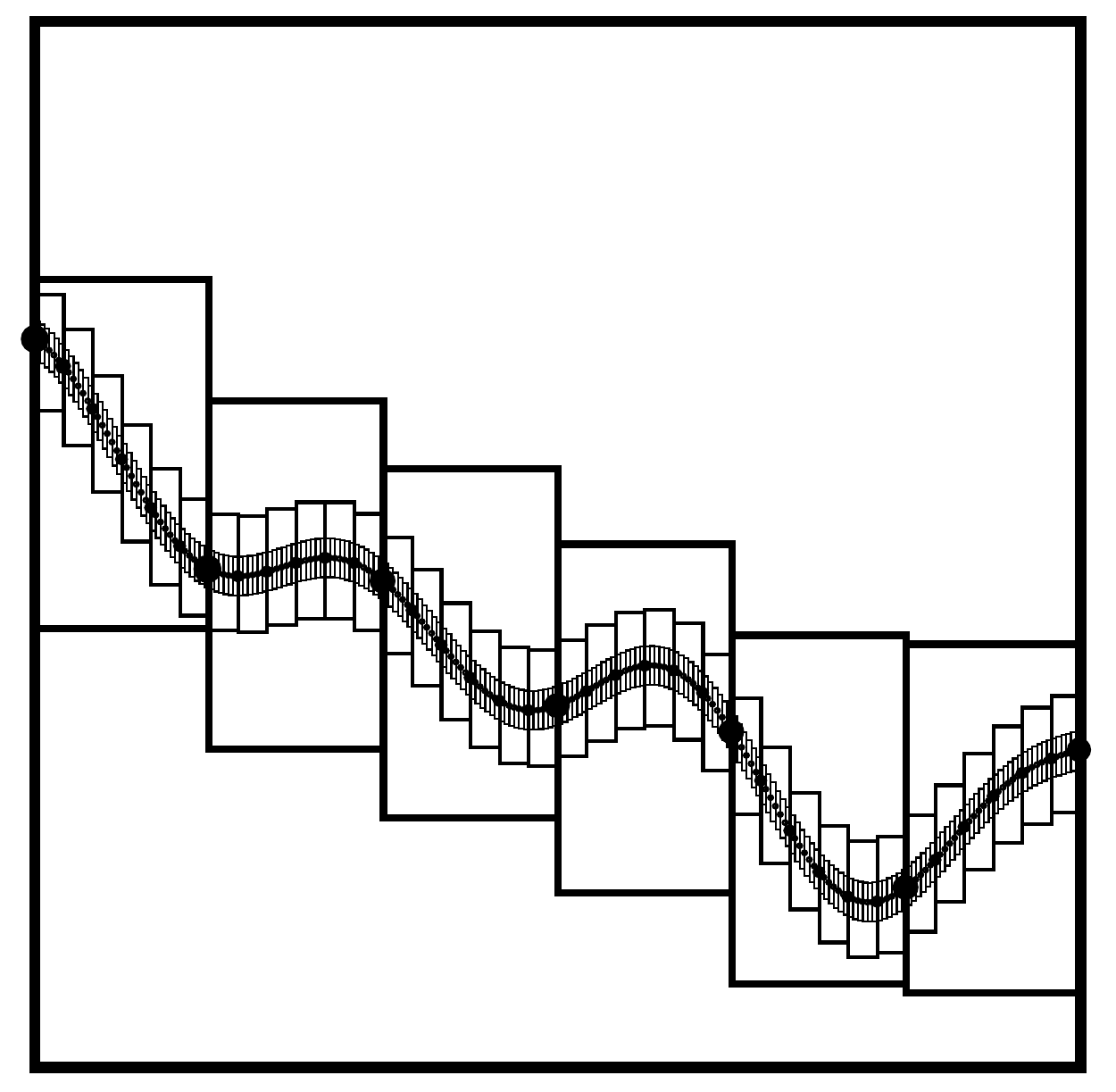}
\end{center}
\caption{Nested rectangles on different levels.}\label{*figrectangles}
\end{figure}

\medskip

\begin{lemma}\label{lem:rectangle-to-hoelder}
 Let $(R_{n,k})_{n,k}$ be a nested sequence of chains of rectangles.
 Assume further that there exist $j \in \N \setminus {\{ 1 \}}$ and $\beta\in (0,1)$  with $w_k=j^{-k}$ and $h_k=\beta^k$.
 Then $g$ as defined in \eqref{*gteq} is Hölder-continuous with exponent 
 $\alpha:=$$-\frac{\log(\beta)}{\log(j)}$.
\end{lemma}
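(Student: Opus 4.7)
The plan is to directly estimate $|g(s) - g(t)|$ for arbitrary $s,t \in [0,1]$ by locating both points in the chain of rectangles at an appropriate level $k$, chosen so that the horizontal scale $w_k = j^{-k}$ roughly matches $|s-t|$. Concretely, given $s \ne t$ I would choose the unique $k \in \N_0$ with
\[
 j^{-(k+1)} \le |s-t| < j^{-k},
\]
so that $s$ and $t$ belong to either the same rectangle $R_{n,k}$ or to two consecutive rectangles $R_{n,k}, R_{n+1,k}$ in the connected chain at level $k$.

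The key step is then to combine two facts from Proposition \ref{lem:g-rect} and Definition \ref{def:nested-collection}. First, $g(s)$ lies in the vertical interval $[y_{m(k,s),k}, y_{m(k,s),k}+h_k]$, and similarly for $g(t)$. Second, the connectedness property (c) of a chain of rectangles gives $|y_{n+1,k} - y_{n,k}| \le h_k$. Therefore the vertical intervals associated with the rectangles containing $s$ and $t$ are either identical or overlap in such a way that their union has height at most $2h_k$. This yields the uniform bound
\[
 |g(s) - g(t)| \le 2 h_k = 2 \beta^k.
\]

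To finish, I would convert this bound into a bound in terms of $|s-t|$ using the defining relation $\alpha = -\log(\beta)/\log(j)$, which is equivalent to $\beta = j^{-\alpha}$. Then
\[
 \beta^k = j^{-k\alpha} = j^{\alpha} \cdot j^{-(k+1)\alpha} \le j^{\alpha}\,|s-t|^{\alpha},
\]
so $|g(s)-g(t)| \le 2 j^{\alpha}\,|s-t|^{\alpha}$, which is exactly Hölder continuity of $g$ with exponent $\alpha$ (and explicit constant $2j^{\alpha}$).

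I do not anticipate a serious obstacle here; the argument is essentially a dyadic-style scaling estimate. The only care needed is bookkeeping the two sub-cases (same rectangle versus two adjacent rectangles at level $k$), and handling the degenerate case $s = t$ trivially. The Hölder exponent $\alpha \in (0,1]$ is ensured by the assumptions $\beta \in (0,1)$ and $j \ge 2$, which together give $\alpha = -\log\beta/\log j > 0$.
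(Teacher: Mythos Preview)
Your proof is correct and takes a more direct route than the paper's. The paper restricts to the dense set of $j$-adic points $\{nj^{-k}\}$, establishes the single-step bound $|g((n+1)j^{-k})-g(nj^{-k})|\le h_k$ from the chain property, and then for arbitrary $j$-adic $x,y$ chooses $k_0$ with $j^{-k_0}<|x-y|\le j^{-k_0+1}$, truncates the $j$-adic expansions of $x,y$ at level $k_0$ to points $x^*,y^*$, and telescopes $|g(x)-g(x^*)|$ via a geometric series; the resulting H\"older constant is $2\frac{(j-1)j^{-\alpha}}{1-j^{-\alpha}}+j$. Your argument instead invokes the containment $g(s)\in[y_{m(k,s),k},\,y_{m(k,s),k}+h_k]$ from Proposition~\ref{lem:g-rect} directly for arbitrary $s,t$, so no density-plus-continuity reduction and no telescoping are needed, and you obtain the cleaner constant $2j^{\alpha}$. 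Two small remarks: your final sentence asserts $\alpha\in(0,1]$, but the hypotheses only force $\alpha>0$ (this does not affect the argument); and the boundary case $|s-t|=1$, i.e.\ $\{s,t\}=\{0,1\}$, falls just outside your inequality $|s-t|<j^{-k}$ for $k=0$, but is handled immediately since both points lie in $R_{1,0}$.
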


\begin{proof}
By continuity of $g$ it is enough to show the Hölder property on the dense subset 
$\{nj^{-k}\colon k\in \N, 0\le n\le j^k\}$. First we note that
\begin{equation}\label{eq:hoelder1}
|g((n+1)j^{-k})-g(nj^{-k})|\le h_k =\beta^k =j^{k\frac{\log(\beta)}{\log(j)}}=(j^{-k})^{\alpha}\,.
\end{equation}
Now let $x,y\in\{nj^{-k}\colon k\in \N, 0\le n\le j^k\}$, $x\not=y$.  Consider the $j$-adic representations 
\[
x= \sum_{k=1}^K x_k j^{-k}\,,\quad y= \sum_{k=1}^K y_k j^{-k}\,,
\]
with $x_k,y_k\in \{0,\ldots,j-1\}$ for all $k$.
Choose $k_{0}$ such that the following inequality is satisfied
\begin{equation}\label{*eqxy}
j^{-k_0}<|x-y|\le j^{-k_0+1}\,.
\end{equation}
Let $x^{*}=\sum_{\ell=0}^{k_{0}} x_\ell j^{-\ell}$ and $y^{*}=\sum_{\ell=0}^{k_{0}} y_\ell j^{-\ell}$. Then by \eqref{eq:hoelder1}
\begin{equation}\label{*xcsi}
\max \{|g(x)-g(x^{*})|, |g(y)-g(y^{*})|\} \leq (j-1)\sum_{\ell =k_{0}+1}^{K}j^{-\ell \alpha} \leq \frac{(j-1)j^{-(k_{0}+1)\alpha}}{1-j^{-\alpha}}.
\end{equation}
Also by \eqref{eq:hoelder1} we have
$$|g(x^*)-g(y^*)|\leq j\cdot j^{-k_{0}\alpha}.$$
By using \eqref{*eqxy}, \eqref{*xcsi} and the triangle inequality we obtain that
$$ 
|g(x)-g(y)|<\Big (2\frac{(j-1)j^{-\alpha}}{1-j^{-\alpha}} +j \Big ) j^{-k_{0}\alpha}<
\Big ( 2\frac{(j-1)j^{-\alpha}}{1-j^{-\alpha}} +j \Big )|x-y|^{\alpha}.
$$
This finishes the proof.
\end{proof}

In the following we construct a concrete example of a Hölder-continuous function $g_H$. 
From now on  we will always assume $h_k:=\frac{10}{10^k}$ for all $k\in \N_0$.

We assume that each rectangle $R_{n,k}$ comes with an {\em entry point} 
$a_{n,k}\in [y_{n,k},y_{n,k}+h_k]$ and an {\em exit point} $b_{n,k}\in [y_{n,k},y_{n,k}+h_k]$
such that $a_{n+1,k}=b_{n,k}$ for $n=1,\ldots,N_k-1$. It is easy to see that the  $R_{n,k}$,
$n\in\{1,\ldots,N_{k}\}$
 form a connected chain iff such entry/exit points exist and conversely, given $w_k,h_k$, the 
entry/exit points determine the chain. 

Finally, we require $a_{m d_k+1,k+1}=a_{m+1,k}$ and $b_{(m+1) d_k,k+1}=b_{m+1,k}$ for all $m
=0,\ldots,N_{k}-1$.

We start with $R_{1,0}:=[0,1]\times[-5,5]$, such that $(x_{1,0},y_{1,0})=(0,-5)$, $w_0=1$, $h_0=10$.
Further, we set $a_{1,0}:=\frac{5}{2}$, $b_{1,0}:=-\frac{5}{2}$. 

We build a nested sequence of
 connected
 chains of rectangles $\{R_{n,k}\colon k\in \N_0, 1\le n \le N_k\}$  from two kinds of rectangles: ``ascending'' and ``descending'' ones: 
\begin{convention*}
We call a rectangle $R_{n,k}$ with entry point $a_{n,k}$ and exit point $b_{n,k}$
\begin{enumerate}
\item {\em ascending} if $a_{n,k}=y_{n,k}+\frac{1}{4}h_k$ and 
$b_{n,k}=y_{n,k}+\frac{3}{4}h_k$
\item {\em descending} if $a_{n,k}=y_{n,k}+\frac{3}{4}h_k$ and 
$b_{n,k}=y_{n,k}+\frac{1}{4}h_k$\,.
\end{enumerate}
\end{convention*}

\medskip

Thus, in the sense of this convention,  $R_{1,0}$ is descending.

Now in level $k=1$, start with connecting 14 descending rectangles, such that  $b_{14,1}=
\frac{5}{2}-14\cdot\frac{5}{10}=-\frac{45}{10}$ (recall that $h_1=1$). 
Continue by  connecting 18 ascending rectangles, 
such that  $b_{14+18,1}=-\frac{45}{10}+18\cdot\frac{5}{10}=\frac{45}{10}$. 
We proceed with connecting another 11   groups of each 18 descending and ascending rectangles  such that 
the interval $[-4,4]$ is crossed 12  times. We end with 4 ascending rectangles such that 
$b_{N_1,1}=-\frac{5}{2}$. (So $N_1=14+ 12 \cdot 18+4=234 =d_1$, $\frac{1}{234} =w_1$).
This procedure is repeated for every $k$, where for the descending rectangles the nesting is precisely
as for $k=1$, such that in each rectangle $R_{n,{k-1}}$ we cross $12 $ times the interval $[y_{n,{k-1}}+\frac{1}{10^{k}},y_{n,{k-1}}+\frac{1}{10^{k-1}}-\frac{1}{10^{k}}]$, yielding $14+12\cdot 18+4 $ rectangle descendants with width $w_k=234^{-k} $ for $k\geq 1$. For ascending rectangles we start with 14 ascending rectangles and continue in
the analogous way. 

Together with $h_k=10^{1-k}$, the construction of our function $g_H$  is thus finished 
via Proposition  \ref{lem:g-rect}, Item \ref{it:prop1-rect-to-g}, and we need to show that $g_H$  
has an impermeable graph .

For that purpose, take an arbitrary function $f\colon [0,1]\to \R$ with $f(0)\le \frac{5}{2}$ and 
$f(1)\ge -\frac{5}{2}$ and 
$V_0^1(f)<1$. We will show that $\{t\in [0,1]\colon f(t)=g_H(t)\}$ is uncountable. 

Since $V_0^1(f)<1$, the graph of $f$ is contained in $[0,1]\times [-\frac{7}{2},\frac{7}{2}]$.

Since the chain $R_{n,1}$, $n=1,\ldots,N_1$, alternates $12$  times between $-\frac{45}{10}$ and 
$\frac{45}{10}$  it must cross  the interval $[-\frac{7}{2},\frac{7}{2}]$
at least that many times, and so must the graph of $g_H$ .

We say the graph of $f$ is {\em upcrossing} a decreasing rectangle  
$R_{n,k}$, if  $f(x_{n,k})\le g_H(x_{n,k})=a_{n,k}$  
and $f(x_{n,k}+w_k)\ge g_H(x_{n,k}+w_k)=b_{n,k}$. 
We say an upcrossing is {\em good}, if 
$V_{x_{n,k}}^{x_{n,k}+w_k}(f)<10^{-k}$. In an analogous way we may define (good) 
downcrossings of
an ascending rectangle.  A {\em good crossing} is a good up- or downcrossing. 

The  graph of $f$ is contained in the strip $[0,1]\times [-\frac{7}{2},\frac{7}{2}]$ which is crossed
12  times by the chain of rectangles $(R_{n,1})$. We must therefore have at least 6 upcrossings and 6 downcrossings. Since 
the total variation of $f$ is smaller than 1, we need to have at least 2 up- or down crossings that 
are good. Indeed, proceeding towards a contradiction, suppose that  more than 10 crossings were not good, say those at rectangles 
$R_{n_1,1},\dotsc,R_{n_{ 11},1}$. Then 
$V_0^1(f)\ge V_{x_{{n_1},1}}^{x_{{n_1},1}+w_1}(f)+\dotsb+ V_{x_{{n_{11}},1}}^{x_{{n_{11}},1}+w_1}(f)
>10\cdot \frac{1}{10}=1$, which poses a contradiction.  

For rectangles with good crossings we may repeat the argument. 
 So we get at least $12-10=2$ 
 good crossings  
 on the next level for every good crossing at the present level.
In particular there are at least $2^{\aleph_0}$-many $t\in [0,1]$ which are contained in a sequence of nested rectangles  with good crossings. For all but countably many $t\in [0,1]$ the corresponding sequence $(m(k,t))_{k\ge 0}$ is unique, so that 
$(R_{m(k,t),k})_{k\ge 0}$ is a nested sequence of rectangles with good crossings and
$x_{m(k,t),k}<t<x_{m(k,t),k}+w_k$.
Consider one such $t$. Then $f(x_{m(k,t),k})\in [y_{m(k,t),k},y_{m(k,t),k}+h_k]$,
and $g_H(t)=\lim_{k\to \infty} y_{m(k,t),k}$, 
  so if $f$ is continuous in $t$, then $f(t)=g_H(t)$. 
  
  Since $f$ is of bounded variation, $f$ has at most countably many points of discontinuity. 
  Therefore we have still $2^{\aleph_0}$-many $t\in [0,1]$ with $g_H(t)=f(t)$.

\medskip

Using Lemma \ref{lem:rectangle-to-hoelder} and the constructed example we have proven the following theorem :

\begin{theorem}\label{thm:hoelder-ex}
There exists a 
 Hölder continuous function $g_H\colon [0,1]\to\R$ with Hölder exponent $\frac{\log(10)}{\log(234)}\approx 0.42$ and with an impermeable graph.
 Specifically, for every function $f:[0,1]\to \R$ with $f(0)=0$ and $V_0^1(f)<1$ the set 
 $\{t\in [0,1]\colon g_{H}(t)=f(t)\}$ is uncountable. 
\end{theorem}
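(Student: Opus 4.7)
The plan is to take the construction laid out just above — a nested sequence of chains of rectangles with widths $w_k = 234^{-k}$ and heights $h_k = 10 \cdot 10^{-k}$ — and verify both the Hölder regularity and the impermeability statement separately. For the regularity I will invoke Lemma \ref{lem:rectangle-to-hoelder} with $j = 234$ and $\beta = 1/10$; the constant factor $10$ in $h_k$ only enlarges the Hölder constant but does not change the exponent, so the lemma delivers the Hölder exponent $\alpha = \log(10)/\log(234) \approx 0.42$.

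For impermeability, let $f$ satisfy $f(0) = 0$ and $V_0^1(f) < 1$. The variation bound forces $|f(t)| = |f(t) - f(0)| \le V_0^1(f) < 1$, so the graph of $f$ lies in the strip $[0,1] \times [-1,1] \subset [0,1] \times [-7/2,7/2]$. Since the level-$1$ chain alternates $12$ times between the heights $\pm 45/10$, its up- and downcrossings of this strip must all be traversed by the graph of $f$. I will then run a hierarchical combinatorial argument on \emph{good crossings}, calling a crossing of $R_{n,k}$ good when $V_{x_{n,k}}^{x_{n,k}+w_k}(f) < 10^{-k}$.

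The key numerical observation is: within a good crossing of a level-$(k-1)$ rectangle, the variation of $f$ is strictly less than $10^{-(k-1)} = 10 \cdot 10^{-k}$, so of the $12$ level-$k$ children at least $12 - 10 = 2$ must again be good, since otherwise $11$ bad children would contribute total variation $\ge 11 \cdot 10^{-k} > 10^{-(k-1)}$, contradicting the parent bound. Iterating from the automatically good root $R_{1,0}$ (whose goodness is precisely the hypothesis $V_0^1(f) < 1$) produces an infinite binary subtree of nested good-crossing rectangles, whose intersection contains $2^{\aleph_0}$ many points $t \in [0,1]$. The main obstacle is setting up this local-to-global accounting cleanly so that the inequality $11 \cdot 10^{-k} > 10^{-(k-1)}$ is strict; this is exactly why the good-crossing threshold is $10^{-k}$ and why $12 = 10 + 2$ crossings per level is the right parameter choice.

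Finally I must promote each such $t$ to an honest coincidence $f(t) = g_H(t)$. At any level $k$, the good-crossing condition together with the entry offsets $a_{n,k} \in \{y_{n,k} + h_k/4,\, y_{n,k} + 3h_k/4\}$ confines $f(x_{m(k,t),k})$ to $[y_{m(k,t),k},\, y_{m(k,t),k} + h_k]$. By continuity of $f$ at $t$, combined with the squeeze $h_k \to 0$ and $y_{m(k,t),k} \to g_H(t)$ provided by Proposition \ref{lem:g-rect}, Item \ref{it:prop1-rect-to-g}, I obtain $f(t) = g_H(t)$. Discarding the countable set of $t$ where the expansion $(m(k,t))$ fails to be unique (Proposition \ref{lem:g-rect}, Item \ref{it:prop1-expansion}) and the at-most-countable set of discontinuities of the bounded variation function $f$ still leaves uncountably many $t$ with $f(t) = g_H(t)$, which completes the proof.
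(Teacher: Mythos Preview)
Your proposal is correct and follows essentially the same route as the paper: invoke Lemma~\ref{lem:rectangle-to-hoelder} for the H\"older exponent, then run the pigeonhole argument on good crossings (at most $10$ of the $12$ crossings can fail the variation bound $10^{-k}$, leaving at least $2$ good ones) to build a binary tree of nested good rectangles, and finally pass to the limit at continuity points of $f$ while discarding the countable exceptional sets. One minor phrasing issue: the ``$12$ level-$k$ children'' are not children rectangles (there are $234$ of those) but rather the $12$ crossings of the strip containing $\graph(f)$ by the level-$k$ chain inside the parent; the paper makes this distinction explicit, and you should too, since establishing that those $12$ crossings actually occur uses the confinement of $f$ to a narrow strip inside the parent rectangle.
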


\subsection{The Hausdorff-dimension of the intersection set}\label{*secHau}

We will now modify the construction from Subsection \ref{sec:hoelder-example} 
to get a graph which is even ``more impermeable''  at the price of losing Hölder continuity. 
\begin{theorem}\label{*impHau}
There exists a continuous function $g_C\colon [0,1]\to \R$ which has impermeable graph and, in addition,   the set $\{t\in [0,1]: f(t)=g_C(t)\}$ has Hausdorff dimension $1$ for every bounded variation function $f$ for which $f(0)=0$ and $V_0^1(f)<1$. 
\end{theorem}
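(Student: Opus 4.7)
The plan is to modify the construction of $g_H$ from Subsection \ref{sec:hoelder-example} by letting the number of traversals per parent rectangle grow with the level. Concretely, replace the constant $12$ (the number of crossings of the inner substrip by the chain within each parent rectangle) by a sequence $(d_k)_{k\ge 1}$ of integers with $d_k\to\infty$, while keeping $h_k=10^{1-k}$ unchanged. Since each traversal still uses a bounded number of sub-rectangles (each stepping $h_k/2$ in the $y$-direction across a range of order $h_{k-1}$), the number of level-$k$ rectangles per level-$(k-1)$ parent is $N_k\le c\,d_k$ for some absolute constant $c$. Proposition \ref{lem:g-rect}, Item \ref{it:prop1-rect-to-g}, applied to the resulting nested sequence of chains produces a continuous function $g_C\colon[0,1]\to\R$. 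Hölder continuity is lost because $w_k=\prod_{j\le k}N_j^{-1}$ now decays super-geometrically while $h_k$ decays geometrically, violating the hypothesis of Lemma \ref{lem:rectangle-to-hoelder}.

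The intersection argument from Subsection \ref{sec:hoelder-example} transfers almost verbatim. For $f$ of bounded variation with $f(0)=0$ and $V_0^1(f)<1$, declare a crossing at level $k$ \emph{good} if the variation of $f$ over the corresponding sub-rectangle is less than $10^{-k}$. Inside each good parent at level $k-1$ (whose variation is less than $10^{-(k-1)}$), the chain makes $d_k$ crossings of its inner substrip and at most $10$ of them can be bad, so at least $G_k:=d_k-10$ good crossings survive. Writing $\mathcal G_k$ for the set of indices of good level-$k$ rectangles contained in good level-$(k-1)$ parents, the compact set
\[
K:=\bigcap_{k\ge 0}\bigcup_{n\in\mathcal G_k}[x_{n,k},\,x_{n,k}+w_k]
\]
is, by the sandwich argument at the end of Subsection \ref{sec:hoelder-example} together with the fact that $f$ has only countably many discontinuities, contained in $\{t\in[0,1]:f(t)=g_C(t)\}$ up to a countable set. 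In particular $\hdim\{t:f(t)=g_C(t)\}\ge\hdim K$.

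The main obstacle is showing $\hdim K=1$, which I would handle via the mass distribution principle. Place on each good level-$k$ interval the uniform measure of total mass $1/\prod_{j\le k}G_j$; these form a consistent family defining a Borel probability measure $\mu$ supported on $K$. Since the level-$k$ intervals of width $w_k=1/\prod_{j\le k}N_j$ tile $[0,1]$, any ball $B(x,r)$ with $w_k\le r<w_{k-1}$ meets at most $2r/w_k+2$ of them, so
\[
\mu(B(x,r))\le\frac{2r/w_k+2}{\prod_{j\le k}G_j}\le\frac{4\,r\,\prod_{j\le k}N_j}{\prod_{j\le k}G_j}.
\]
As $d_k\to\infty$ we have $\log G_j/\log N_j\to 1$ and, by Stolz--Cesàro, $\sum_{j\le k}\log G_j/\sum_{j\le k}\log N_j\to 1$. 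Hence for every $s<1$ there is a constant $C_s>0$ with $\mu(B(x,r))\le C_s r^s$ for all small $r$, so the mass distribution principle yields $\hdim K\ge s$. Letting $s\to 1^-$ and using $K\subseteq[0,1]$ gives $\hdim K=1$, completing the proof.
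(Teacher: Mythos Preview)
Your approach is essentially the paper's: let the number of crossings per parent grow with the level (the paper takes the specific choice $d_k=100^k$), carry over the good-crossing counting, and apply the mass distribution principle to a measure living on the tree of good intervals. Your asymptotic shortcut via $\log G_j/\log N_j\to 1$ and Stolz--Ces\`aro replaces the paper's explicit constant-tracking; both lead to $\mu(B(x,r))\le C_s r^s$ for every $s<1$.

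One small but genuine gap: assigning each good level-$k$ interval the mass $1/\prod_{j\le k}G_j$ does \emph{not} yield a consistent family, because a good parent may well have more than $G_k=d_k-10$ good children ($G_k$ is only a lower bound). Two easy repairs: either prune to exactly $G_k$ good children per parent before distributing mass, or---as the paper does---split each parent's mass equally among its \emph{actual} good children, so that every good level-$k$ interval carries mass at most $1/\prod_{j\le k}G_j$. Your ball estimate and the rest of the argument then go through unchanged.
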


Before proving the theorem we recall   the {\em mass distribution principle}:
\begin{theorem*}[Mass distribution principle, {\cite[Chapter 4]{falconer}}]
Let $F\subseteq \R^d$ and let $\nu$ be a mass distribution  on $F$, i.e., a measure on $\mathcal{B}(F)$. 
Suppose that for some 
$s\geq 0$ there are numbers $c>0$ and $ {\delta}>0$ such that $  \displaystyle   {\nu}(U)\leq c|U|^{s}$
for all sets $U$ with $|U|\leq  {\delta}$. 
Then $ {\mathscr{H}}^{s}(F)\geq \nu(F)/c$ and $s\leq \hdim (F).$
\end{theorem*}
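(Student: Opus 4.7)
The plan is to modify the construction of $g_H$ from Subsection \ref{sec:hoelder-example} by letting the number of crossings per parent rectangle grow with the level, so that the branching factor of the good-rectangle Cantor tree eventually matches the width-reduction factor. Concretely, I would keep $h_k := 10^{1-k}$ but, at each level $k\ge 1$ and inside each parent rectangle $R_{m,k-1}$, arrange the chain of descendant sub-rectangles to perform $C_k := k+11$ complete up-and-down traversals of the inner strip of height $h_{k-1}-2h_k$. Each traversal uses roughly $2(h_{k-1}/h_k-1)=18$ sub-rectangles, so the branching factor satisfies $d_k = 18\, C_k + O(1)$ and the widths $w_k=\prod_{i=1}^k d_i^{-1}$ decay super-geometrically. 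By Proposition \ref{lem:g-rect}(\ref{it:prop1-rect-to-g}) the limit $g_C\colon[0,1]\to\R$ is still continuous, although it is no longer Hölder continuous.

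For impermeability I would argue as in the proof of Theorem \ref{thm:hoelder-ex}. Fix $f$ of bounded variation with $f(0)=0$ and $V_0^1(f)<1$, so that the graph of $f$ lies in the strip $[0,1]\times(-1,1)$, which is crossed $C_1=12$ times by the level-$1$ chain. Say a crossing of $R_{n,k}$ is \emph{good} when $V_{x_{n,k}}^{x_{n,k}+w_k}(f)<10^{-k}$; the variation of $f$ inside any good level-$(k-1)$ parent rectangle is strictly less than $10^{-(k-1)}$, and since distinct sub-intervals contribute disjointly to this budget, at most $9$ of its $C_k$ level-$k$ crossings can be bad, leaving $M_k\ge C_k-9 = k+2$ good ones. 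Following nested good crossings downward from level $1$ and discarding the countable set of $t$ where the choice of $m(k,t)$ is not unique or where $f$ is discontinuous, we obtain a Cantor-like set $F\subseteq[0,1]$ on which $f=g_C$.

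To show $\hdim F = 1$ I would apply the mass distribution principle recalled above. Setting $G_k:=\prod_{i=1}^k M_i$, I define a probability measure $\nu$ on $F$ by assigning mass $G_k^{-1}$ uniformly to each good level-$k$ base interval; this is consistent since every good level-$k$ rectangle has at least $M_{k+1}\ge 2$ good descendants. For $U\subseteq\R$ with $|U|<w_1$, choose $k$ so that $w_{k+1}\le|U|<w_k$; then $U$ meets at most two level-$k$ base intervals, hence
\[
\frac{\nu(U)}{|U|^s} \le \frac{2}{G_k\, w_{k+1}^s} = 2\, d_{k+1}^s\prod_{i=1}^{k}\frac{d_i^s}{M_i}.
\]
With $d_i=18\, i + O(1)$ and $M_i=i+2$, the logarithm of the right-hand side equals $(s-1)k\log k + O(k)$, which tends to $-\infty$ for every $s<1$; hence the right-hand side is bounded by some constant $c_s$, and the mass distribution principle gives $\hdim F\ge s$. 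Letting $s\nearrow 1$ yields $\hdim F=1$; since $\{t:f(t)=g_C(t)\}$ differs from $F$ by at most a countable set, it has Hausdorff dimension $1$ as well.

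The main difficulty is the calibration: $C_k$ must grow fast enough that $\log M_k/\log d_k\to 1$ at a rate which closes the logarithmic estimate above for every $s<1$, yet slowly enough that the variation budget at each level still produces at least two good crossings inside every good parent rectangle. The choice $C_k=k+11$ paired with $h_k=10^{1-k}$ achieves both, with the dominant term $(s-1)k\log k$ comfortably absorbing the smaller positive contributions from $d_{k+1}^s$ and from the constant factors.
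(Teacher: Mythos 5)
Your proposal does not address the stated theorem at all. The statement to be proved is the \emph{mass distribution principle} itself: a general result in geometric measure theory asserting that if a measure $\nu$ on $F\subseteq\R^d$ satisfies $\nu(U)\le c|U|^s$ for all sufficiently small $U$, then $\mathscr{H}^s(F)\ge \nu(F)/c$ and consequently $s\le\hdim(F)$. The paper cites this without proof from Falconer. The correct argument is short and purely measure-theoretic: for any countable cover $\{U_i\}$ of $F$ with $|U_i|\le\delta$, one has
\[
0<\nu(F)\le\sum_i\nu(U_i)\le c\sum_i|U_i|^s,
\]
so $\sum_i|U_i|^s\ge\nu(F)/c$; taking the infimum over all such covers and letting the mesh $\varepsilon\to 0$ (the inner infimum is monotone in $\varepsilon$) gives $\mathscr{H}^s(F)\ge\nu(F)/c>0$, whence $s\le\hdim(F)$.

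What you have written instead is a sketch of the \emph{application} of the mass distribution principle, namely the construction of a function $g_C$ and the verification that the intersection set $\{t:f(t)=g_C(t)\}$ supports a measure with a power bound — in other words, an attempt at proving Theorem~\ref{*impHau}, not the theorem in question. Your sketch invokes the mass distribution principle as a black box rather than proving it. This is a category error: nothing in your argument establishes the implication ``$\nu(U)\le c|U|^s$ for small $U$ $\Rightarrow$ $\mathscr{H}^s(F)\ge\nu(F)/c$'', which is what was asked. Even read as an attempt at Theorem~\ref{*impHau}, your parameter choice ($C_k=k+11$, giving $M_k=k+2$ good crossings against $d_k\approx 18k$ subintervals) differs from the paper's ($100^k$ traversals, giving $M_k\ge 100^k-10$), and whether the ratio $\prod_{i\le k} d_i^s/M_i$ is actually $O(1)$ for all $s<1$ under your choice would require the claimed $(s-1)k\log k$ asymptotic to be checked carefully against the $d_{k+1}^s$ prefactor and the constant $O(1)$ terms — but that is beside the point, since it is not the theorem you were asked to prove.
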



\begin{proof}[Proof of Theorem \ref{*impHau}.]
First we will construct a measure $\nu$ on $[0,1]$ having support on the set of points $t$ which are contained in a nested sequence of projections of rectangles with good crossings:

Let $g_C$ be the function defined by the above construction via a scheme of nested, connected 
chains of rectangles  as above, with $(R_{n,k})_{n=1}^{N_k}$ having height $h_k=10^{1-k}$ at level $k$  
and  $w_k=\prod_{l=1}^k (14+100^l\cdot 18+4)^{-1}$. That is, at level $k$ we have, for each rectangle
$R_{n,k-1}$, at least $100^k$ new chains of descending/ascending rectangles, crossing the strip $[x_{n,k},x_{n,k}+w_k]\times[y_{n,k}+15\cdot 10^{-k-1},y_{n,k}+10^{1-k}-15\cdot 10^{-k-1}]$  a $100^k$ times.  
By the same reasoning as in Subsection \ref{sec:hoelder-example}, 
for every good crossing at level $k-1$, we have at least $100^k-10$ good crossings at level $k$.

Let $F_{n,k}$ denote the projection   of the rectangle $R_{n,k}$  to the $x$-axis, i.e., $F_{n,k}:=[nw_k,(n+1)w_k]=[x_{n,k},x_{n,k}+w_k]$. 
Recall  that for any $t\in [0,1]$ there is a sequence $\big(m(k,t)\big)_{{k\geq 0}}$  such that $t\in F_{m(k,t),k}$ for all $k\geq 0$ and that this sequence is  unique 
except when $t\in(0,1)\cap \partial F_{n,k}$  for some $(n,k)$. For definiteness we set 
$m(k,t):=\max\{1\le n\le N_k\colon t\in F_{n,k}\}$.  This admits the following mappings (a.k.a. $F_{n,k}$-adic tree representation), 
$$
 c\colon [0,1]
\to \prod_{k\geq 0 }\{F_{n,k}:1\leq n\leq N_k\},
\qquad
t 
\mapsto \left(F_{m(k,t),k}\right)_{k\geq 0 }
$$
and
$$
c^*\colon c\left([0,1]\right),
\to [0,1],
\qquad
\left(F_{n,k}\right)_{k\geq 0 }
\mapsto\inf \bigcap_{k\geq 0 }F_{n,k}.
$$

Note that the second mapping can be extended to the domain of all nested sequences
$$\mathcal{S}:=\Big\{\varphi\in \prod_{k\geq 0 }\{F_{n,k}:1\leq n\leq N_k\}: \varphi_{k+1}\subseteq \varphi_k 
\text{ for all }k\in\N_0\Big\},$$ 
 which differs from the image of $c$ only by countably many elements.
We now endow $\prod_{k\geq 0 }\{F_{n,k}:1\leq n\leq N_k\}$ with the  product $\sigma$-algebra

$$\Sigma:=\bigotimes_{k\geq 0}\mathcal{P}\Big(\big\{F_{n,k}\colon n\in\{1,\dotsc,N_k\}\big\}\Big).$$ 
Here $\mathcal{P}(X)$ denotes the power set of a set $X$.
The function $c$ is $\mathcal{B}([0,1])-\Sigma$-measurable and $c^*$ is $\check{\Sigma}-\mathcal{B}([0,1])$-measurable, where $\check{\Sigma}$ is the trace $\sigma$-algebra of $\Sigma$ on $c([0,1])$. 
We will now define a measure $\mu$ on $\Sigma$ which will have support 
in $\mathcal{S}$ only. In fact, it will have support only on those sequences consisting of projections of good crossed rectangles. To that end let $\mu_0(F_{1,0}):=1$ and, for  every   product set $A_k:=F_{n_0 ,0 }\times\dotsb\times F_{n_k,k}$  define recursively
\begin{align*}
\mu_{0,\dotsc,k}(A_k):=
\begin{cases}
0, &\text{ if } \neg(R_{n_1,1}\supseteq\dotsb\supseteq R_{n_k,k}),\\
0, &\text{ if }\exists l, 1\leq l\leq k \text{ s.t.~$R_{n_l,l}$ has }\\[-0.3em]
  &\qquad\qquad\quad \text{no good crossing},\\
\frac{\mu_{0,\dotsc,k-1}(F_{n_0 ,0 }\times\dotsb\times F_{n_{k-1},k-1})}{\xi(n,k-1)}, &\text{ otherwise,} 
\end{cases}
\end{align*}
where 
\begin{align*}
\xi(n,k):=\#\Big\{m\in \{(n-1)d_{k+1}+1,\ldots, & n d_{k+1}\} \colon \\[-0.6em]
&R_{m,k+1} \text{ has a good crossing in } R_{n,k} \Big\}
\end{align*} 
for $n=1,\ldots,N_k$.

As the family of measures $(\mu_{0 ,\dotsc,k})_{k\geq 1}$ clearly satisfies  the compatibility relations of the Kolmogorov extension theorem \cite[Theorem 14.35]{klenke}, there exists  a measure $\mu$ on $\Sigma$ such that $\mu$ restricted to the trace-$\sigma$-algebra of $\Sigma$ on $\prod_{0 \leq \tilde{k}\leq k}\{F_{n,\tilde{k}}:1\leq n\leq N_{\tilde{k}}\}$ yields $\mu_{0 ,\dotsc,k}$.
(Actually, already the Ionescu-Tulcea theorem \cite[Theorem 14.32]{klenke}, which is a ``smaller gun'' than Kolmogorov's theorem, suffices for this purpose.)

By means of the mapping $c^*$, we may transport the measure $\mu$ to $[0,1]$. The resulting pushforward measure $\nu:=c^*(\mu)$ then has support on $$F:=\left\{t\in [0,1]: t\in \bigcap_{k\geq 0 }F_{m(k,t),k}, \text{ all }R_{m(k,t),k}\text{ have good crossings}\right\}.$$
By the definition of $\mu$ and $\nu$, and since for all rectangle projections $R_{n,{k-1}}$ with good crossings there are at least $100^k-10$ descendant rectangles with good crossings, it follows that $\nu(\{t\})=0$
 for all $t\in[0,1]$.

Let now $s\in(0,1)$. Then choose the smallest $k_s\geq 0$ such that 
\begin{align}\label{eq:sexponent}
\frac{\log(100^{k_s}-10)}{\log(18\cdot 100^{k_s}+18)}>s.
\end{align}
We show that  
\begin{equation}\label{*kisl}
\text{$\nu(F_{n,l})\leq 21^l|F_{n,l}|$ for  $0\leq l<k_s$.}
\end{equation}
 The case
$l=0$ is obvious since  $\nu([0,1])=1\leq 21=21^1|F_{n,0}|$  trivially. 
 For  $1\leq l< k_s$ we have that
$$\nu(F_{n,l})\leq \frac{\nu(F_{n,l-1})}{100^l-10},$$
as there are at least $100^{l}-10$   rectangles in $F_{n,l-1}$ with good crossings. Using the induction hypothesis for $l-1$, we continue with 
$$\frac{\nu(F_{n,l-1})}{100^l-10}\leq \frac{21^{l-1}|F_{n,l-1}|}{100^l-10}=\frac{21^{l-1}w_{l-1}}{100^l-10}=\frac{21^{l-1}\prod_{1\leq \tilde{l}\leq l-1}\frac{1}{18+18\cdot 100^{\tilde{l}}}}{100^l-10}.$$
Since $l\ge 1$, the last   term is smaller than 
$$21^l\prod_{1\leq \tilde{l}\leq l}\frac{1}{18+18\cdot 100^{\tilde{l}}}=21^l|F_{n,l}|.$$
 From  $s\in (0,1)$, $1\leq l< k_s$ and \eqref{*kisl} it also follows that $\nu(F_{n,l})\leq 21^{{k_s}-1}|F_{n,l}|^s$

Next we consider $l\geq k_s$: Here we observe for $l=k_s:$ 
\begin{align*}
\nu(F_{n,k_s})\leq \frac{\nu(F_{n,k_s-1})}{100^{k_s}-10}\leq \frac{21^{k_s-1}|F_{n,k_s-1}|}{100^{k_s}-10}=  \frac{21^{k_s-1}\prod_{1\leq \tilde{l}\leq k_s-1}\frac{1}{18+18\cdot 100^{\tilde{l}}}}{100^{k_s}-10}, 
\end{align*}which can be bounded by
\begin{align*}
21^{k_s-1}\prod_{1\leq \tilde{l}\leq k_s}\left(\frac{1}{18+18\cdot 100^{\tilde{l}}}\right)^s=21^{k_s-1}|F_{n,k_s}|^s,
\end{align*}
which follows from $s\in (0,1)$ and \eqref{eq:sexponent}.
 This forms an induction start, such that for $l>k_s$, we get similarly,
\begin{align*}
\nu(F_{n,l})&\leq \frac{\nu(F_{n,l-1})}{100^l-10}\leq \frac{21^{k_s-1}|F_{n,l-1}|^s}{100^l-10}= \frac{21^{k_s-1}\prod_{1\leq \tilde{l}\leq l-1}\left(\frac{1}{18+18\cdot 100^{\tilde{l}}}\right)^s}{100^l-10}\\
& \leq 21^{k_s-1}\prod_{1\leq \tilde{l}\leq l}\left(\frac{1}{18+18\cdot 100^{\tilde{l}}}\right)^s=21^{k_s-1}|F_{n,l}|^s.
\end{align*}

Thus, for a given $s\in (0,1)$, we found  $k_s$ such that  for all $l$  
\begin{align}\label{eq:massdisprinc}
\nu(F_{n,l})\leq 21^{k_s-1}|F_{n,l}|^s.
\end{align}

Now we  show that there exists a constant $C_s$ such that 
$\nu(U)\leq C_s|U|^s$ for all measurable subsets $U\subseteq [0,1]$. 
Clearly, it is enough to show this for the case where $U$ is an interval, which we assume from now on.
There exists $l\in\N$ with $w_l<|U|\le w_{l-1}$. 
Now $U$ can be covered by at most 2 projections of rectangles of width $w_{l-1}$, 
say $U\subseteq F_{m-1,l-1}\cup F_{m,l-1}$. Moreover, we can approximate 
$U$ from outside by $F_{m_1,l}\cup\ldots \cup F_{m_1+a_1,l} \cup 
F_{m_1+a_1+1,l}\cup\ldots \cup F_{m_1+a_1+a_2+1,l}$, with 
$ F_{m_1+j_1,l}\subseteq  F_{m-1,l-1}$
and $ F_{m_1+a_1+j_2,l}\subseteq  F_{m,l-1}$ for all $0\le j_1 \le a_1$, $1\le j_2 \le a_2+1$, for an appropriate index $1\le m_1\le N_l$, and from inside by 
$F_{m_1+1,l}\cup\ldots \cup F_{m_1+a_1,l} \cup 
F_{m_1+a_1+1,l}\cup\ldots \cup F_{m_1+a_1+a_2,l}$.
Note that from these approximations we get 
\begin{equation}\label{eq:approx-outsidee}
a_1+a_2\le 18\cdot 100^l+18
\end{equation} 
and  
\begin{equation}\label{eq:approx-inside}
\max(1,a_1+a_2)w_l\le |U| \le (a_1+a_2+2) w_l\,.
\end{equation}

Assume, that the rectangles 
with projections
 $F_{m-1,l-1}, F_{m,l-1}$ admit good crossings (all other cases are easier to handle). Let then $B_1$ be the number of rectangles with good crossings on level $l$, with projections in $F_{m-1,l-1}$. Define the same number $B_2$ with respect to $F_{m,l-1}$ and note that $\min\{B_1,B_2\}\geq 100^l-10$. Denote the number of rectangle projections with good crossings of $F_{m_1,l},\dotsc, F_{m_1+a_1,l}$ by $b_1$ and those of $F_{m_1+a_1+1,l}\cup\ldots \cup F_{m_1+a_1+a_2+1,l}$ by $b_2$. Observe that 
 \begin{equation}\label{eq:approx-outside}
 b_1+b_2\le a_1+a_2+2\,.
 \end{equation} We use the
additivity of $\nu$ to get
\begin{align*}
\nu(U)\leq \sum_{i=0}^{a_1+a_2+1}\nu(F_{m_{1}+i,l})=\sum_{i=0}^{a_1}\nu(F_{m_{1}+i,l})+\sum_{i=a_1+1}^{a_1+a_2+1}\nu(F_{m_{1}+i,l}).
\end{align*}
By definition of $\nu$, the right-hand side of the above equality equals  
\begin{align*}
\frac{b_1}{B_1}\nu(F_{m-1,l})+\frac{b_2}{B_2}\nu(F_{m,l}).
\end{align*}
Now we may use \eqref{eq:massdisprinc} to continue with
\begin{align*}
&\frac{b_1}{B_1}\nu(F_{m-1,l})+\frac{b_2}{B_2}\nu(F_{m,l})\leq \frac{b_1}{B_1}21^{k_s-1}|F_{m-1,l}|^s+\frac{b_2}{B_2}21^{k_s-1}|F_{m,l}|^s\\
&= 21^{k_s-1}\left(\frac{b_1}{B_1}+\frac{b_2}{B_2}\right)w_{l-1}^s=21^{k_s-1}\left(\frac{b_1}{B_1}+\frac{b_2}{B_2}\right)\prod_{\tilde{l}=1}^{l-1}\frac{1}{(18\cdot 100^{\tilde{l}}+18)^s}\\
&=21^{k_s-1}\left(\frac{b_1}{B_1}+\frac{b_2}{B_2}\right)(18\cdot 100^l+18)^s\prod_{\tilde{l}=1}^{l}\frac{1}{(18\cdot 100^{\tilde{l}}+18)^s}\\
&=21^{k_s-1}\left(\frac{b_1}{B_1}+\frac{b_2}{B_2}\right)(18\cdot 100^l+18)^s w_l^s =:\text{\Large\bf \textasteriskcentered}.
\end{align*}
Using Equations \eqref{eq:approx-inside} and \eqref{eq:approx-outside}  and that $\min\{B_1,B_2\}\geq 100^l-10$, we can further estimate 
\begin{align*}
\text{\Large\bf \textasteriskcentered} &\leq 21^{k_s-1}\frac{b_1+b_2}{100^l-10}(18\cdot 100^l+18)^s \frac{|U|^s}{\max(1,a_1+a_2)^s}\\
&\leq 21^{k_s-1}\frac{a_1+a_2+2}{\max(1,a_1+a_2)^s}\frac{(18\cdot 100^l+18)^s}{100^l-10} |U|^s\,.
\end{align*}
Since
\begin{align*}
\frac{a_1+a_2+2}{\max(1,a_1+a_2)^s}&\le 
(a_{1}+a_{2})^{1-s}+ \frac{2}{\max(1,a_1+a_2)^s}
\le   (18\cdot 100^l+18)^{1-s}+2\\
&\le 3\cdot{(18\cdot 100^l+18)^{1-s}}\,,
\end{align*}
where we used  \eqref{eq:approx-outsidee} in the second inequality, we have altogether
\begin{align*}
\nu(U)
&\leq 3\cdot 21^{k_s-1}\frac{18\cdot 100^l+18}{100^l-10} |U|^s
\leq 3\cdot 21^{k_s} |U|^s\,.
\end{align*}

From the mass distribution principle it follows that $s\leq \hdim(F)$ for all $s\in (0,1)$, showing that $\hdim(F)=1$. 
As seen in Subsection \ref{sec:hoelder-example}, there is an at most countable set $E_f$ such that $$F\setminus E_f \subseteq\{t\in [0,1]:f(t)=g(t)\}.$$ Since a countable set does not change the Hausdorff-dimension, also $\hdim(F\setminus E)=1$. Since 
$F\setminus E \subseteq \{t\in [0,1]:f(t)=g_C(t)\}$, also
$$\hdim(\{t\in [0,1]:f(t)=g_C(t)\})=1.$$ 

It is not difficult to see that the function $g_C$ is not Hölder continuous for any exponent in $(0,1)$.
\end{proof}


\medskip

The mass distribution principle applied to the Hölder continuous function 
$g_H$, constructed in Subsection \ref{sec:hoelder-example}  does not deliver Hausdorff-dimension 1 of the intersection set. In this case, the rectangle width in each step is $w_k=\left(\frac{1}{234}\right)^k$, and in each step, for a rectangle with good crossing, one obtains at least $12-10$ good crossings.
 Hence, the mass distribution principle as above only works up to $s= \frac{\log(2)}{\log(234)}\approx 0.127$.

Recall that we have in total 14 ascending or descending chains within the first rectangle
$R_{1,0}$. A function $f$ with $\graph(f)$ having nonempty intersection with at least 3  rectangles within one ascending or descending chain needs
positive  
 variation for this. If it should touch more than 3 rectangles, it needs to have a variation of at least 
$\frac{m-3}{2}$ there. In any case, if there are  $m_1$  smaller rectangles in 
$R_{1,0}$ which have nonempty intersection with the graph of $f$, then the function $f$ needs a variation of at least 
\[
\frac{m_1 -3\cdot 14}{2}=\frac{ m_1 -42}{2}\,.
\] 
Conversely, this means $m_1 \le  42+2 v_1$, where $v_1=V_0^1(f)$.
Denote the variation of $f$ over the traversed rectangle $R_{n,1}$ by $v_{n,2}$, so that 
$\sum_{n=1}^{m_1}  v_{n,2} \le v_1$. Denote by $m_{n,2}$ the number  of smaller rectangles in $R_{n,1}$ which have nonempty intersection with the graph of $f$, and denote the sum of them for all $n$ by $m_{2}$, that is, 
\(m_2:=\sum_{n=1}^{m_1}m_{n,2}\). 
Then with the same argument as above,
$v_{n,2}\ge \frac{m_{n,2}-42}{20}$, so that $m_{n,2} \le 42+20 v_{n,2}$.
Summing over all $m_{n,2}$ we get 
\[
m_2\le \sum_{n=1}^{m_1} (42+20 v_{n,2})\le
42 m_1+ 20 v_1 < 42^2 +(2\cdot 42 +20)v_1\,.
\]
On the third level we get 
\[
m_3\le \sum_{n=1}^{m_2} 42+200 v_{n,3}\le 42 m_2+ 200 v_1\le 42^3 
+(2\cdot 42^2+20\cdot 42 + 200)v_1\,,
\]
and in general
\begin{align*}
m_k
&\le 42^k+2 v_1\sum_{j=1}^k 42^{k-j}10^{j-1}
=42^k\Big(1+\frac{1}{21} v_1\sum_{j=1}^k 42^{-(j-1)}10^{j-1}\Big)\\
&=42^k\bigg(1+\frac{1}{ 21} v_1\sum_{j=0}^{k-1} \Big(\frac{10}{42}\Big)^j\bigg)
\le 42^k\Big(1+\frac{1}{16} v_1 \Big)\,.
\end{align*}

As $v_1\le 1$, we see that the intersection set can be covered by at most $42^k \frac{17}{16}$  many intervals of width $w_k=\left(\frac{1}{234}\right)^k$.
 The $s$-dimensional Hausdorff-content 
$\mathscr{H}^s(\{t:f(t)=g_H(t)\})$  of the intersection set is thus bounded by
$$\lim_{k\to\infty}\frac{42^k \frac{17}{16}}{234^{ks}}=0$$   
for  $s>\frac{\log(42)}{\log(234)}$. Using the mass distribution principle as above, we get 
$$0.127\approx \frac{\log(2)}{\log(234)}\leq \hdim(\{t:f(t)=g_H(t)\})\leq \frac{\log(42)}{\log(234)}\approx 0.685\,.$$ 

In the special case of the constant function $f\equiv 0$, we see that it intersects exactly $2\cdot 13$  rectangles  out of the 234  rectangles in each step. Using the argument above, we get that 
$\hdim(\{t:g_H(t)=0\}) =\frac{\log(26)}{\log(234)}\approx 0.597$ .\\

We have seen   that for the $\frac{\log(10)}{\log(234)}$-H\"older continuous function $g_H$, we obtained an estimate for the Hausdorff-dimension of the intersection set 
$\{t\in [0,1]: f(t)=g_H(t)\}$  for every function $f$ with $f(0)=0$ and 
variation $V_0^1(f)\leq 1$. In the same spirit, we may ask, whether for a given Hölder exponent $\alpha\in (0,1)$ we can adapt our construction to produce a function $g_\alpha$ such that there are two constants $0\leq d(\alpha)\leq D(\alpha)\leq 1$ to control the Hausdorff-dimension by
$$d(\alpha)\leq \hdim(\{t\in [0,1]: f(t)=g_\alpha(t)\})\leq D(\alpha)$$  
for all functions $f$ with variation  less or equal  than 1. An immediate upper bound $1-\alpha$ for $D(\alpha)$ is given in \cite{rajalanote}, where level sets of H\"older functions are investigated, corresponding to constant functions.

\section{Application in metric topology}\label{sec:metric-top}

In this section we answer a question posed in \cite{leoste21}: Given a topological submanifold 
$\Theta\subseteq \R^d$, closed as a subset,  and a continuous function $f\colon \R^d\to\R$ which is Lipschitz continuous with respect to 
the intrinsic metric on $\R^d\setminus \Theta$, can we conclude that $f$ is Lipschitz continuous with respect to the Euclidean 
metric? The question is answered in the affirmative for the case of submanifolds with Lipschitz continuous charts,
cf.~\cite[Theorems 15 and 31]{leoste21}.  In this section we show that in 
general the answer is no, even for submanifolds with Hölder charts.

We start by recalling concepts from metric topology, including the notion of permeability of subsets of metric spaces,
 which was also introduced in \cite{leoste21}.  In Subsection \ref{sec:graphs-paths} we will show  that the earlier notion of permeability of the graph of 
 a continuous one-dimensional function is in harmony with the notion for subsets.
The example of a Hölder submanifold $\Theta\in \R^2$ with the property that intrinsic Lipschitz continuity of a function on $\R^2\setminus \Theta$ plus continuity on $\R^2$ does not imply Lipschitz continuity, is constructed in Subsection
\ref{sec:cantor-madness}.

\subsection{Intrinsically Lipschitz functions and permeable subsets of metric spaces}\label{sec:pwlm} 

Throughout this section, let $(M,d)$ be a metric space. To begin,  we recall some definitions for metric spaces. 
\begin{definition}[Path, arc, length]
 \phantom{ } 
\begin{enumerate}
\item A {\em path} in $M$ is a continuous mapping $\gamma\colon[a,b]\to M$.
We also say that $\gamma$ is a path in $M$ from $\gamma(a)$ to $\gamma(b)$. 
\item An injective path is called an {\em arc}.
\item If $\gamma\colon[a,b]\to M$ is a path in $M$, then its {\em length} $\ell(\gamma)$ is defined
as
\[
\ell(\gamma)
:=\sup\Big\{\sum_{k=1}^n d\big(\gamma(t_k),\gamma(t_{k-1})\big)\colon n\in\N,\,a=t_0<\ldots<t_n=b\Big\}\,.
\]
\end{enumerate}
\end{definition}


\begin{definition}[Intrinsic metric, length space]\label{def:intrinsic}
Let $E\subseteq M$ and $\Gamma(x,y)$ be the set of all paths of finite length in $E$ from $x$ to $y$. 
The {\em intrinsic metric} $\rho_E$ on $E$ is defined by
\[
\rho_E(x,y):=\inf\big\{\ell(\gamma)\colon \gamma\in \Gamma(x,y)\big\}\,, \qquad(x,y\in E)\,,
\]
with the convention $\inf\emptyset=\infty$.
The metric space $(M,d)$ is a {\em length space}, iff $\rho_M=d$.
 \end{definition}

Note that $\rho_E$ is not a proper metric in that it may assume  the value infinity. 
Of course, one could relate $\rho_E$ to a proper metric via
\[
\tilde \rho_E (x,y)
:=\begin{cases}
\frac{\rho_E(x,y)}{1+\rho_E(x,y)}\,, &\text{if } \rho_E(x,y)<\infty\,,\\
1\,, &\text{if }  \rho_E(x,y)=\infty\,.
\end{cases}
\] 
However, we stick to $\rho_E$, as it is the more natural choice and the extended co-domain does not lead to any difficulties.

It is readily checked that, if we allow $\infty$ as the value of a metric, then  
$(E,\rho_E)$ is  a length space.
%
%
%
%

\begin{definition}[Intrinsically Lipschitz continuous function]\label{def:pw-lip}
Let $E\subseteq M$, let $(\msr,d_\msr)$ be a metric space
and $f\colon M\to \msr$ be a function.
\begin{enumerate}
\item We call  $f$  {\em intrinsically $L$-Lipschitz continuous} 
on  $E$ 
iff
$f|_E\colon E\to \msr$ is Lipschitz continuous with respect to the
intrinsic metric $\rho_E$ on $E$ and $d_\msr$ on $\msr$ and 
$L$ is a Lipschitz constant for $f|_E$. 
\item We call  $f$  {\em intrinsically Lipschitz continuous} 
on  $E$ 
iff
$f$ is intrinsically $L$-Lipschitz continuous on $E$ for some $L$.
\item In the above cases we call $M\setminus E$ an {\em exception set (for intrinsic Lipschitz continuity)} of $f$. 
\end{enumerate}
\end{definition}

Note that if $\Theta\subseteq M$ is an exception set for intrinsical Lipschitz continuity that does not mean 
that Lipschitz continuity holds only on $E=M\setminus \Theta$. For example, if $f\colon M\to Y$ is intrinsically Lipschitz continuous, then every subset $\Theta\subseteq M$ is an exception set for intrinsical Lipschitz continuity of $f$.

A classical method for proving Lipschitz continuity of a differentiable function also works for intrinsic Lipschitz continuity: 
\begin{example}
Let $A\subseteq \R^d$ open and let $f\colon A\to \R$ be differentiable
with $\sup_{x\in A}\|\nabla f(x) \|<\infty$. Then $f$ is intrinsically Lipschitz continuous on $A$ with Lipschitz constant $\sup_{x\in A}\|\nabla f(x) \|$. 
 
A proof can be found in  \cite[Lemma 3.6 ]{sz2016b}.
\end{example}

\begin{example}\label{ex:intLip}
Consider the function
$f\colon\R^2\longrightarrow\R$, $f(x)=\|x\|\arg(x)$. Then $f$ is not Lipschitz continuous with respect to the Euclidean metric, since 
$$\text{$\lim_{h\to 0+}f(\cos(\pi-h),\sin(\pi-h))=-\pi$ and 
$\lim_{h\to 0+}f(\cos(\pi+h),\sin(\pi+h))= \pi$}.$$ 
It is readily checked, however, that $f$ is Lipschitz continuous on $E=\R^2\backslash\{x\in\R^2: x_1<0,x_2=0\}$ with respect to the intrinsic metric $\rho_E$.
Thus $f$ is intrinsically Lipschitz continuous with exception set $\Theta:=\{x\in\R^2: x_1<0,x_2=0\}$ in the sense of Definition \ref{def:pw-lip}.

\end{example}

\begin{remark}\label{rem:schlitz}
It is almost obvious, 
that every function $f\colon \R^2\to \R$, which is continuous on $\R^2$ and intrinsically Lipschitz continuous on 
$\R^2\setminus \{(x_1,x_2):x_1<0, x_2=0\}$, is Lipschitz continuous on $\R^2$.  
One can use that $\Theta:=\{(x_1,x_2):x_1<0, x_2=0\}$ does not pose a `hard' barrier, since every straight line 
connecting two points in $\R^2\setminus \Theta$ has at most one intersection point with $\Theta$ and so
one can conclude the Lipschitz continuity by approaching $\Theta$ from either side (we invite the reader to 
make this argument rigorous).   
\end{remark}

To make the elementary property of `not being a hard barrier' precise, we define at this point 
the notion of permeability.

\begin{definition}\label{def:permeable}
Let $E,\Theta\subseteq M$. 
\begin{enumerate}
\item The $\Theta$-{\em intrinsic metric} $\rho^\Theta_E$ on $E$ is defined by
\[
\rho^\Theta_E(x,y):=\inf\big\{\ell(\gamma)\colon \gamma\in \Gamma^\Theta(x,y)\big\} \,
\]
where $\Gamma^\Theta(x,y)$ is the set of all paths $\gamma\colon[a,b]\to M$ of finite length in $E$ from $x$ to $y$, such that $\overline{\{\gamma(t):t\in{[a,b]}\}\cap \Theta}$ is at most countable. 
(Again, we use the convention that $\inf\emptyset=\infty$.)
\item The $\Theta$-{\em finite intrinsic metric} $\rho^{\Theta,\textsc{fin}}_E$ on $E$ is defined by
\[
\rho^{\Theta,\textsc{fin}}_E(x,y):=\inf\big\{\ell(\gamma)\colon \gamma\in \Gamma^{\Theta,\textsc{fin}}(x,y)\big\} \,
\]
where $\Gamma^{\Theta,\textsc{fin}}(x,y)$ is the set of all paths $\gamma\colon[a,b]\to M$ of finite length in $E$ from $x$ to $y$, such that $\{\gamma(t):t\in{[a,b]}\}\cap \Theta$ is finite. 
\item We call $\Theta$ 
{\em permeable relative to $M$} iff $\rho_{M}=\rho_M^\Theta$.
\item We call $\Theta$ 
{\em finitely permeable relative to $M$} iff $\rho_{M}=\rho_M^{\Theta,\textsc{fin}}$.
\end{enumerate}
When the ambient space $(M,d)$ is understood and there is no danger of confusion, we simply
say $\Theta$ is (finitely) permeable.
\end{definition}

Note, that there seems to be a difference between the definitions of 
a permeable set and of a permeable graph. Theorem  \ref{th:harmony} will state
that in fact a continuous function has a permeable graph if and only if its graph is a permeable subset
of $\R^2$.

\begin{remark}\label{rem:permeable-simple}
A set $\Theta\subseteq M$ is (finitely) permeable iff for any $x,y\in M$ and every $\varepsilon>0$ there exists a path
$\gamma$ from $x$ to $y$ in $M$ with $\ell(\gamma)<\rho_M(x,y)+\varepsilon$ and such that
\(
\overline{\{\gamma(t):t\in{[a,b]}\}\cap \Theta}
\) is at most countable (finite). Clearly, every finitely permeable set is permeable.
\end{remark}

We now state a result from \cite{leoste21}.

\begin{theorem*}[{\cite[Theorem 15]{leoste21}}]
Let $\Theta\subseteq M$ be permeable, $(\msr,d_\msr)$ a  metric space. Then every continuous function $f\colon M\to \msr$,
which is intrinsically $L$-Lipschitz continuous on $E=M\setminus \Theta$, is intrinsically $L$-Lipschitz continuous on the whole of $M$.
\end{theorem*}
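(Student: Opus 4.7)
The plan is to fix $x,y\in M$ with $\rho_M(x,y)<\infty$ (otherwise the inequality is vacuous) and an arbitrary $\varepsilon>0$, and to show $d_\msr(f(x),f(y))\le L\rho_M(x,y)+L\varepsilon$; letting $\varepsilon\to 0$ then gives the conclusion. By Remark~\ref{rem:permeable-simple} the permeability of $\Theta$ produces a path $\gamma\colon[0,L']\to M$ from $x$ to $y$ with $L':=\ell(\gamma)<\rho_M(x,y)+\varepsilon$ and $K:=\overline{\gamma([0,L'])\cap\Theta}$ at most countable; reparametrizing by arc length, I may further assume $\ell(\gamma|_{[s,t]})=t-s$ for all $0\le s\le t\le L'$.

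Next I would decompose the parameter interval. Set $B:=\gamma^{-1}(K)$, which is closed, and $U:=[0,L']\setminus B$. If $t\in U$, then $\gamma(t)\notin K$, and since $\gamma([0,L'])\cap\Theta\subseteq K$, one has $\gamma(t)\in E$. Hence on each connected component $(a_\ell,b_\ell)$ of $U$, $\gamma((a_\ell,b_\ell))\subseteq E$, and for $s,t\in(a_\ell,b_\ell)$ with $s<t$, $\gamma|_{[s,t]}$ is an $E$-valued path of length $t-s$, so $\rho_E(\gamma(s),\gamma(t))\le t-s$. The intrinsic $L$-Lipschitz hypothesis on $E$ then gives that $\phi:=f\circ\gamma$ is $L$-Lipschitz on every component of $U$.

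The key step is to combine these bounds across the whole of $[0,L']$ in spite of $B$, where $\gamma$ may cross $\Theta$. For this I introduce the continuous function $\Phi\colon[0,L']\to[0,\infty)$, $\Phi(t):=d_\msr(f(x),\phi(t))$, so that $\Phi(0)=0$ and $\Phi(L')=d_\msr(f(x),f(y))$. By the reverse triangle inequality $\Phi$ inherits the $L$-Lipschitz property on each component of $U$, and since $\Phi(B)\subseteq\{d_\msr(f(x),f(p)):p\in K\}$ is at most countable, $\mathscr{H}^1(\Phi(B))=0$. Using disjointness of the components $(a_\ell,b_\ell)\subseteq[0,L']$ and countable subadditivity of $\mathscr{H}^1$, I would obtain
\[
\mathscr{H}^1\bigl(\Phi([0,L'])\bigr)\le \sum_\ell \mathscr{H}^1\bigl(\Phi((a_\ell,b_\ell))\bigr)+\mathscr{H}^1\bigl(\Phi(B)\bigr)\le \sum_\ell L(b_\ell-a_\ell)\le L\cdot L'.
\]
Since $\Phi([0,L'])$ is a connected subset of $\R$ containing both $0$ and $d_\msr(f(x),f(y))$, it contains the closed interval between them, so $d_\msr(f(x),f(y))\le \mathscr{H}^1(\Phi([0,L']))\le L L'$, as desired.

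The main obstacle I anticipate is the set $B$: a priori it can be uncountable and even of positive Lebesgue measure (imagine a fat-Cantor-style preimage of a single point of $K$), so the intrinsic Lipschitz hypothesis gives no direct control of $\phi$ there. The trick is to push the problem onto $\R$ through $\Phi$: countability of $K$ passes to $\Phi(B)$ and makes it $\mathscr{H}^1$-negligible, while the component-wise $L$-Lipschitz control on the open complement $U$ pins down $\mathscr{H}^1(\Phi([0,L']))$; the required bound on $d_\msr(f(x),f(y))$ then follows because connected subsets of $\R$ are intervals.
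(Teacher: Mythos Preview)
The paper does not contain its own proof of this statement: it is quoted verbatim as \cite[Theorem~15]{leoste21} and used as a black box, so there is nothing in the paper to compare your argument against.

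That said, your argument is correct and self-contained. The essential idea---reducing to a single path $\gamma$ via permeability, noting that $f\circ\gamma$ is $L$-Lipschitz on each component of $U=[0,L']\setminus\gamma^{-1}(K)$, and then using that $\Phi(B)$ is countable (hence $\mathscr H^1$-null) to conclude that the connected image $\Phi([0,L'])\subseteq\R$ has length at most $L\cdot L'$---handles precisely the obstacle you identify: $B=\gamma^{-1}(K)$ may have positive Lebesgue measure, but pushing forward through $\Phi$ collapses it to a countable set. The continuity of $f$ is used exactly where it must be, namely to ensure $\Phi$ is continuous and hence has an interval as image. One minor remark: your claim that arc-length reparametrization gives $\ell(\gamma|_{[s,t]})=t-s$ is fine (it follows from additivity of length and the $1$-Lipschitz property), though you only need the inequality $\le$ in what follows.
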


In particular, if $M$ is a length space, we get the following corollary.
\begin{corollary*}[{\cite[Corollary 20]{leoste21}}]
Let $M$ be a length space and let $\Theta\subseteq M$ be permeable. Then every continuous function $f\colon M\to \msr$ 
into a  metric space $(\msr,d_\msr)$,
which is intrinsically $L$-Lipschitz continuous on $E=M\setminus \Theta$, is $L$-Lipschitz continuous on the whole of $M$ (i.e., with respect to $d$).
\end{corollary*}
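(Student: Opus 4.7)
The plan is to derive this corollary directly from the preceding theorem, using only the definition of a length space. First I would invoke the theorem cited immediately above (Theorem 15 of \cite{leoste21}) with the same hypotheses: since $\Theta\subseteq M$ is permeable, $(\msr,d_\msr)$ is a metric space, and $f\colon M\to\msr$ is continuous and intrinsically $L$-Lipschitz continuous on $E=M\setminus\Theta$, the theorem yields that $f$ is intrinsically $L$-Lipschitz continuous on the whole of $M$. By Definition \ref{def:pw-lip}, this means
\[
d_\msr(f(x),f(y))\le L\,\rho_M(x,y)\qquad\text{for all }x,y\in M,
\]
with $\rho_M$ the intrinsic metric on $M$ as in Definition \ref{def:intrinsic}.

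Next I would use the hypothesis that $M$ is a length space. By the final part of Definition \ref{def:intrinsic}, this says precisely that $\rho_M=d$. Substituting this identity into the inequality from the previous step gives
\[
d_\msr(f(x),f(y))\le L\,d(x,y)\qquad\text{for all }x,y\in M,
\]
which is exactly the statement that $f$ is $L$-Lipschitz continuous on $M$ with respect to $d$.

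There is no real obstacle here: the corollary is a one-line consequence obtained by chaining the theorem with the definition of length space. The only point deserving a small comment is that $\rho_M$ is permitted to take the value $\infty$, but in a length space the equality $\rho_M=d$ forces $\rho_M$ to be finite everywhere (since $d$ is), so the Lipschitz inequality is meaningful and the conclusion is a genuine statement about the original metric $d$.
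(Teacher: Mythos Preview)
Your argument is correct and is exactly the natural derivation: apply the cited theorem to upgrade intrinsic $L$-Lipschitz continuity from $E$ to all of $M$, then use the defining identity $\rho_M=d$ of a length space. The paper does not actually give its own proof of this corollary (it is merely quoted from \cite{leoste21}), so there is nothing further to compare; your write-up matches the intended one-line deduction.
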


Revisiting  Remark \ref{rem:schlitz}, we see that the situation is more interesting if we replace the set 
$\R^2\setminus \{(x,0):x<0\}$ by $\R^2\setminus \Theta$ with $\Theta=\{(x,g(x)):x\le 0\}$, where $g$ is some continuous function on $(-\infty,0]$.
By the above theorem, every continuous function $f\colon \R^2\to \R$, which is intrinsically Lipschitz on 
$\R^2\setminus \Theta$ is automatically Lipschitz on $\R^2$, if $\Theta$ is permeable, which in turn is 
the case iff $g$ has permeable graph, by Theorem  \ref{th:harmony}.

From Theorem \ref{thm:typical} we know, that for a typical $g$, $\Theta$ is permeable, but from 
Corollary \ref{thm:hoelder-ex} we also know that not every Hölder function $g$ pertains this property.

%
%
%
%

The study of permeability of subsets of $\R$  is very tidy: A subset $\Theta$ is permeable iff it has countable 
closure. Further this is equivalent to the property that every continuous 
intrinsically Lipschitz continuous function with exception set $\Theta$ is Lipschitz continuous. 

\begin{theorem*}[{\cite[Theorem 23]{leoste21}}]
Let $\Theta\subseteq \R$. Then $\Theta$ has countable closure if and only if
for all intervals $I\subseteq \R$ and all functions $f\colon I\to \R$ 
the properties
\begin{itemize}
\item $f$ is intrinsically Lipschitz continuous with exception set $\Theta$,
\item $f$ is continuous,
\end{itemize}
imply that $f$ is Lipschitz continuous on $I$.
\end{theorem*}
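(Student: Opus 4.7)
The plan is to prove the two implications separately.

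\textbf{Forward direction ($\Rightarrow$).} Assume $\overline{\Theta}$ is countable. I first show that $\Theta$ is permeable relative to any interval $I\subseteq\R$, and then invoke the permeability machinery already developed. For arbitrary $x,y\in I$, the straight-line path $\gamma(t):=(1-t)x+ty$ lies in $I$ and has length $|x-y|=\rho_I(x,y)$. Its image is $[\min(x,y),\max(x,y)]$, and hence
\[
\overline{\gamma([0,1])\cap\Theta}\subseteq\overline{\Theta}
\]
is countable. By Remark \ref{rem:permeable-simple}, $\Theta$ is permeable in $I$. Since $I$ with the Euclidean metric is a length space, \cite[Corollary 20]{leoste21} yields that every continuous $f\colon I\to\R$ which is intrinsically $L$-Lipschitz on $I\setminus\Theta$ is $L$-Lipschitz on all of $I$.

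\textbf{Reverse direction ($\Leftarrow$).} Assume $\overline{\Theta}$ is uncountable; I construct a counterexample. By the Cantor--Bendixson theorem, $\overline{\Theta}=P\cup C$ with $P\neq\emptyset$ perfect and $C$ countable. The guiding observation is: every \emph{non-degenerate} connected component $J$ of $I\setminus\Theta$ (hence an interval) has its interior disjoint from $\overline{\Theta}$, since any interior $\overline{\Theta}$-point would be a limit of $\Theta$-points, contradicting $J\cap\Theta=\emptyset$. Consequently, if $f\colon I\to\R$ is continuous and constant on every connected component of $I\setminus\overline{\Theta}$, then by continuity $f$ is also constant on every non-degenerate component of $I\setminus\Theta$; combined with the triviality of singleton components, $f$ is intrinsically $0$-Lipschitz on $I\setminus\Theta$. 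It therefore suffices to produce such a continuous, component-wise constant $f$ that fails to be Lipschitz on $I$.

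I distinguish two cases. If $P$ has non-empty interior, say $[c,d]\subseteq P$, then $\Theta$ is dense in $[c,d]$, so no non-singleton component of $I\setminus\overline{\Theta}$ intersects $(c,d)$. Setting $I:=[c-1,d+1]$ and
\[
f(x):=\begin{cases}\sqrt{(x-c)(d-x)}, & x\in[c,d],\\ 0, & x\in I\setminus[c,d],\end{cases}
\]
gives a continuous $f$, identically zero on every component of $I\setminus\overline{\Theta}$, yet failing Lipschitz continuity at $c$ because of the square-root behavior. If $P$ has empty interior, I first extract a \emph{Lebesgue-null} perfect subset $P^{*}\subseteq P$ via a standard Cantor-style thinning (at each level of the tree, pick two disjoint closed balls inside each current piece, meeting $P$, with diameters shrinking at a geometric rate fast enough that $2^n$ times the level-$n$ diameter tends to $0$). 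Let $[a,b]$ be the convex hull of $P^{*}$, take $I\supseteq[a,b]$, and put $F(x):=\nu([a,x])$ where $\nu$ is a non-atomic probability measure on $P^{*}$ (obtained, for instance, as the push-forward of the fair-coin measure on $\{0,1\}^{\N}$ along a homeomorphism onto $P^{*}$). Extend $F$ by $0$ on $(-\infty,a)\cap I$ and by $1$ on $(b,\infty)\cap I$. Then $F$ is continuous, constant on every component of $[a,b]\setminus P^{*}$ and hence on every component of $I\setminus\overline{\Theta}$; and if $F$ were $L$-Lipschitz, $\nu$ would be absolutely continuous with respect to Lebesgue measure with density at most $L$, contradicting $\nu(P^{*})=1$ together with $\lambda(P^{*})=0$.

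The main obstacle is precisely this Lebesgue-null extraction when $P$ has empty interior: without $\lambda(P^{*})=0$ the proposed $F$ can actually end up Lipschitz (e.g.~if one naively runs the construction on a fat Cantor set and uses the normalized Lebesgue measure), so the inductive ball-choice must be performed with explicit geometric control on diameters. Everything else — the reduction through the interior-disjointness observation, and the two case-specific constructions — is essentially routine once this point is secured.
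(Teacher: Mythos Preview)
The paper does not actually prove this theorem: it is quoted verbatim as \cite[Theorem~23]{leoste21} and used as background, with no argument given in the present paper. So there is no in-paper proof to compare your attempt against.

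That said, your argument stands on its own. The forward direction is exactly the intended one-line application of permeability: the segment from $x$ to $y$ realizes $\rho_I(x,y)$ and meets $\Theta$ in a set with countable closure, so $\Theta$ is permeable and \cite[Corollary~20]{leoste21} applies. For the reverse direction, your reduction is clean and correct: any non-degenerate component $J$ of $I\setminus\Theta$ has $\operatorname{int}(J)\cap\overline{\Theta}=\emptyset$, so a continuous $f$ that is constant on components of $I\setminus\overline{\Theta}$ is automatically constant on every component of $I\setminus\Theta$, hence intrinsically $0$-Lipschitz there. Your Case~2 construction (Cantor-style null perfect $P^*\subseteq P$, non-atomic probability $\nu$ on $P^*$, $F(x)=\nu([a,x])$) is the standard device and works; note that it already covers Case~1 as well, since every nonempty perfect set admits a null perfect subset, so the case split is convenient but not logically necessary. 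The one point you rightly flag as delicate --- controlling diameters in the thinning so that $\lambda(P^*)=0$ --- is indeed the only place where carelessness would break the argument, but the construction you sketch (choosing level-$n$ diameters with $2^n\cdot\text{diam}\to 0$) handles it.
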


Consider $\Theta=(\R\setminus \Q)^2$. It has been shown in \cite[Proposition 26]{leoste21} that $\Theta$ is impermeable, but
every continuous function $f\colon \R^2\to \R$ which is intrinsically Lipschitz continuous on $\R^2\setminus \Theta$ is 
Lipschitz continuous. However, one cannot conclude $L$-Lipschitz continuous from $L$-intrinsically Lipschitz continuous,
rather, in this particular example, the Lipschitz constant has to be multiplied with $\sqrt{2}$.

\subsection{Graphs of functions and images of paths}\label{sec:graphs-paths}

This subsection contains technical results about the connection between  the notions of
continuous functions with permeable graphs and continuous functions whose 
graphs are permeable sets. The main result is Theorem  \ref{th:harmony}, which states that these concepts coincide.

\begin{definition}
Let $f\colon [a,b]\to \R$. 
\begin{itemize} 
\item We say $f$ is {\em regulated}, if its right-sided limit 
$f(x+):=\lim_{y\searrow x} f(y)$ exists in every 
$x\in[a,b)$, and its left-sided limit $f(x-):=\lim_{y\nearrow x} f(y)$ exists in every 
$x\in(a,b]$.
\item If $f$ is regulated, we define $f(a-):=f(a)$ and $f(b+):=f(b)$.
\item If $f$ is regulated we define its {\em connected graph}
\[
\cgraph(f):=\bigcup_{x\in [a,b]}\conv\big(\{f(x),f(x+),f(x-)\}\big)\,.
\] 
\item We call a function {\em unfrayed}, if it is regulated and $f(x)\in \conv(\{f(x+),f(x-)\})$ for all $x\in [a,b]$.
\end{itemize}
\end{definition}

\begin{proposition}\label{prop:regulated-var}
Let $f$ be a regulated function. Then 
\begin{align*}
V(f)&=\sup\sum_{k=1}^n \left(\big|f(x_{k-1}+)-f(x_{k-1})\big|+\big|f(x_k-)-f(x_{k-1}+)\big|+\big|f(x_k)-f(x_k-)\big| \right)\\
\text{and}&\\
\ell(f)&=\sup\sum_{k=1}^n \bigg(\big|f(x_{k-1}+)-f(x_{k-1})\big|
+\sqrt{(x_k-x_{k-1})^2+\big(f(x_k-)-f(x_{k-1}+)\big)^2}\\
&\qquad\qquad\qquad\qquad\qquad\qquad\qquad\qquad\qquad\qquad\qquad+\big|f(x_k)-f(x_k-)\big| \bigg)\,,
\end{align*}
where the suprema are taken over all partitions $a= x_0<\ldots<x_n=b$ of $[a,b]$.
\end{proposition}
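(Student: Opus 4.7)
The plan is to prove both identities by a two-sided inequality argument, where one direction uses the triangle inequality termwise and the other uses refinement of partitions together with the defining limits of a regulated function. Writing $J_V(P)$ for the sum on the right-hand side of the first formula for a partition $P=(a=x_0<\dotsb<x_n=b)$, and $S_V(P)=\sum_{k=1}^n|f(x_k)-f(x_{k-1})|$ for the usual variation sum, the goal is to show $\sup_P S_V(P)=\sup_P J_V(P)$.

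First I would establish $S_V(P)\le J_V(P)$ for every $P$ by applying the triangle inequality to each term,
\[
|f(x_k)-f(x_{k-1})|\le |f(x_{k-1}+)-f(x_{k-1})|+|f(x_k-)-f(x_{k-1}+)|+|f(x_k)-f(x_k-)|\,,
\]
and summing over $k$. Taking the supremum over $P$ yields $V(f)\le \sup_P J_V(P)$.

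For the reverse inequality, I would fix an arbitrary partition $P$ and $\varepsilon>0$, and use the definition of the one-sided limits to pick $\delta_k\in (0,(x_k-x_{k-1})/2)$ for each $k$ so small that $|f(x_{k-1}+\delta_k)-f(x_{k-1}+)|$ and $|f(x_k-\delta_k)-f(x_k-)|$ are each less than $\varepsilon/(6n)$. Let $P'$ be the refinement of $P$ obtained by inserting these $2n$ points. Then
\[
S_V(P')\ge \sum_{k=1}^n\Big(|f(x_{k-1}+\delta_k)-f(x_{k-1})|+|f(x_k-\delta_k)-f(x_{k-1}+\delta_k)|+|f(x_k)-f(x_k-\delta_k)|\Big)\ge J_V(P)-\varepsilon\,,
\]
by the choice of $\delta_k$. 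Since $\varepsilon>0$ was arbitrary, this gives $J_V(P)\le V(f)$, and taking the supremum finishes the variation identity.

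The length formula is proved by the same two-step pattern. The upper bound follows from the triangle inequality in $\R^2$ applied to the four points $(x_{k-1},f(x_{k-1}))$, $(x_{k-1},f(x_{k-1}+))$, $(x_k,f(x_k-))$, $(x_k,f(x_k))$; the horizontal coordinate contributes only to the middle summand, so the right-hand side matches the expression in the proposition. The approximation step is identical, noting that $\sqrt{\delta_k^2+(f(x_{k-1}+\delta_k)-f(x_{k-1}))^2}\to |f(x_{k-1}+)-f(x_{k-1})|$ as $\delta_k\to 0$ since the horizontal displacement vanishes, and analogously at $x_k$. The only obstacle is purely bookkeeping: distributing the $\varepsilon$-error across the $O(n)$ partition points so that it can be absorbed uniformly; no conceptual difficulty arises beyond the standard use of the one-sided limits.
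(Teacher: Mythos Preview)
Your proof is correct and follows exactly the approach the authors had in mind: the paper leaves this proposition to the reader, and the argument you give---triangle inequality for one direction, refinement by points approximating the one-sided limits for the other, and the analogous treatment for $\ell(f)$---is precisely the intended standard route.
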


\begin{proof}  The proof is quite straightforward and is left to the reader.
\exclude{
Let $a= x_0<\ldots<x_n=b$ be a partition of $[a,b]$, and let $\varepsilon>0$. Then there exist
$x_0^+,\ldots,x_{n-1}^+$ and $x_1^-,\ldots,x_{n}^-$ with $x_{k-1}<x_{k-1}^+<x_k^-<x_k$ for all $k$ and
$\big|f(x_k+)-f(x_k^+)\big|<\frac{\varepsilon}{4(n+1)}$ and $\big|f(x_k-)-f(x_k^-)\big|<\frac{\varepsilon}{4(n+1)}$.
Then 
\begin{align*}
\lefteqn{\sum_{k=1}^n \left(\big|f(x_{k-1}+)-f(x_{k-1})\big|+\big|f(x_k-)-f(x_{k-1}+)\big|+\big|f(x_k)-f(x_k-)\big| \right)}\\
&\le\sum_{k=1}^n \left(\big|f(x_{k-1}^+)-f(x_{k-1})\big|+\big|f(x_k^-)-f(x_{k-1}^+)\big|+\big|f(x_k)-f(x_k^-)\big| \right)+\varepsilon\\
&\le V_a^b(f)+\varepsilon\,.
\end{align*}
For the other inequality let $\varepsilon>0$. There exists a partition 
$a=x_0<\ldots<x_n=b$ of $[a,b]$
with 
\begin{align*}
V_a^b(f)&< \sum_{k=1}^n  \big|f(x_k)-f(x_{k-1})\big|+\varepsilon\\
&\le \sum_{k=1}^n  \big(\big|f(x_{k-1}+)-f(x_{k-1})\big|+\big|f(x_k-)-f(x_{k-1}+)\big|\\
&\qquad\qquad\qquad\qquad\qquad\quad+\big|f(x_k)-f(x_{k}-)\big|\big)+\varepsilon\,.
\end{align*}
From this the claim follows. The proof for the second statement is similar.
}
\end{proof}

The next proposition states that there is a one-one relation between  right-continuous scalar functions 
of bounded variation and certain paths of finite length connecting the endpoints of the function's 
graph in the plane. Basically, the path traces the graph and the jumps of the function. 
Recall that the coordinate functions of a given function 
$\gamma\colon [0,1]\to \R^2$
are denoted by $\gamma_1,\gamma_2$.

\begin{proposition}\label{prop:variation-length}
Let $a,b\in\R^2$, $a<b$. Let an arc $\gamma\colon[0,1]\to\R^2$ be given whose
first component  $\gamma_1$ is 
nondecreasing with $\gamma_1(0)=a$ and $\gamma_1(1)=b$.
Then there is an unfrayed function 
$f\colon[a,b]\to\R$  such that 
$\cgraph(f)=\gamma([0,1])$. 

Conversely, if an unfrayed function $f\colon[a,b]\to\R$  is given, then there exists 
an arc $\gamma\colon[0,1]\to\R^2$ whose
first component  $\gamma_1$ is 
nondecreasing with $\gamma_1(0)=a$ and $\gamma_1(1)=b$, and for which 
 $\gamma([0,1])=\cgraph(f)$. 
 
In both cases $\ell_a^b(f)=\ell(\gamma)$ and $V_a^b(f)=V_0^1(\gamma_2)$.
\end{proposition}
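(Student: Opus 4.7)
The plan is to establish a two-way correspondence between arcs $\gamma\colon[0,1]\to\R^2$ with $\gamma_1$ nondecreasing and unfrayed functions on $[a,b]$, based on the fiber decomposition with respect to $\gamma_1$. Given $\gamma$ as in the statement, the continuous nondecreasing function $\gamma_1\colon[0,1]\to[a,b]$ is surjective by the intermediate value theorem, so for every $x\in[a,b]$ the fiber $I_x:=\gamma_1^{-1}(\{x\})$ is a nonempty closed interval $[s_x,t_x]$. Since $\gamma$ is injective and $\gamma_1$ is constant on $I_x$, the restriction $\gamma_2|_{I_x}$ is a continuous injection, hence strictly monotone. I would define $f(x):=\gamma_2(s_x)$. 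A short argument using monotonicity shows that $s_{x_n}\nearrow s_x$ as $x_n\nearrow x$ and $s_{x_n}\searrow t_x$ as $x_n\searrow x$, so by continuity of $\gamma_2$ one obtains $f(x-)=\gamma_2(s_x)=f(x)$ and $f(x+)=\gamma_2(t_x)$. Thus $f$ is regulated and trivially $f(x)\in\conv\{f(x-),f(x+)\}$, so $f$ is unfrayed. The identity $\cgraph(f)=\gamma([0,1])$ then follows by comparing fibers above each $x$: the fiber of $\gamma([0,1])$ is $\gamma_2(I_x)$, which is the segment joining $\gamma_2(s_x)$ and $\gamma_2(t_x)$ by monotonicity of $\gamma_2|_{I_x}$, and this agrees with $\conv\{f(x),f(x-),f(x+)\}=\conv\{\gamma_2(s_x),\gamma_2(t_x)\}$.

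For the converse, I would construct $\gamma$ by ``stretching'' $[a,b]$ to accommodate each of the at most countably many jumps of $f$. Choosing positive weights $\alpha_i$ at the jumps $x_i$ with $\sum_i\alpha_i<\infty$, the strictly increasing function $H(x):=(x-a)+\sum_{x_i<x}\alpha_i$ has jumps of size $\alpha_i$ at each $x_i$; after normalization $H$ maps $[a,b]$ onto a subset of $[0,1]$ whose complement is a countable union of disjoint open intervals, one per jump. On the image of $H$ set $\gamma(H(x)):=(x,f(x))$; on the open interval at each $x_i$ extend by $\gamma_1\equiv x_i$ while $\gamma_2$ moves linearly from $f(x_i-)$ through $f(x_i)$ to $f(x_i+)$, which is possible because $f$ is unfrayed. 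The resulting $\gamma\colon[0,1]\to\R^2$ is continuous with $\gamma_1$ nondecreasing, $\gamma([0,1])=\cgraph(f)$, and injective because $\gamma_1$ separates distinct $x$-values outside the inserted intervals and $\gamma_2$ is monotone on each inserted interval.

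For the length and variation equalities I would invoke Proposition~\ref{prop:regulated-var}. Any partition of $[0,1]$ can be refined to one of the form $0=\tau_0<\sigma_1<\tau_1<\sigma_2<\ldots$, where each pair $(\sigma_k,\tau_k)$ corresponds to $(s_{x_k},t_{x_k})$ for a partition $a=x_0<\ldots<x_n=b$ of $[a,b]$; the three-term sum in Proposition~\ref{prop:regulated-var} then matches term-by-term the polygonal length sum (respectively variation sum) of $\gamma$ (respectively of $\gamma_2$) along the refined partition, and taking suprema on both sides yields the equalities. I expect the converse construction to be the most delicate step, since the jumps of $f$ may be dense in $[a,b]$ and one must verify that ``inserting intervals at all jumps'' really assembles into a topological interval on which $\gamma$ is continuous; the cumulative-weight function $H$ handles this uniformly and reduces the check to standard facts about distribution-type functions.
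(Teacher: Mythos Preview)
Your approach is essentially the paper's: a pseudo-inverse of $\gamma_1$ reads off $f$, and for the converse a cumulative-jump function creates room for vertical segments. The paper uses the right pseudo-inverse $\gamma_1^+(x)=\inf\{t:\gamma_1(t)>x\}$ (your $t_x$) rather than $s_x$, special-casing $f(a):=\gamma_2(0)$, and handles the length/variation equalities via Proposition~\ref{prop:regulated-var} just as you outline.

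Two small oversights to fix. With $f(x)=\gamma_2(s_x)$ your $f$ is left-continuous, so by convention $f(b+)=f(b)=f(b-)=\gamma_2(s_b)$; if $s_b<1$ then $\cgraph(f)$ omits the vertical piece $\{b\}\times\gamma_2((s_b,1])\subseteq\gamma([0,1])$. Redefining $f(b):=\gamma_2(1)$ cures this, mirroring the paper's special-casing at the other endpoint. In the converse, your $H$ is left-continuous, so the inserted interval at a jump $x_i$ lies to the \emph{right} of $H(x_i)$, yet you have already set $\gamma(H(x_i))=(x_i,f(x_i))$; continuity from the left at $H(x_i)$ then forces $f(x_i-)=f(x_i)$, which a general unfrayed $f$ need not satisfy. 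Apply the rule $\gamma(H(x))=(x,f(x))$ only at continuity points of $f$ and let the closed inserted interval $[H(x_i),H(x_i+)]$ carry the full linear passage $f(x_i-)\to f(x_i)\to f(x_i+)$.
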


\begin{proof}
 We  define a function $\gamma_1^+\colon[a,b]\to \mathbb{R}$
by
\begin{align*}
\gamma_1^+(x):=\begin{cases}
\inf\{t\ge 0\colon \gamma_1(t)>x\}, & \text{ if } x\in [a,b)\,,\\
1 , & \text{ if } x=b
\end{cases}
\end{align*}
Since $\gamma_1$ is continuous and non-decreasing, $\gamma_1^+\colon [a,b]\to \mathbb{R}$ is
a non-decreasing function. Moreover, $\gamma_1(\gamma_1^+(x))=x$ for  all  $x\in [a,b]$.  
Note that $\gamma_1^+$ is right-continuous on $(a,b]$ and $\gamma(x-)$ exists on $(a,b]$.
Define 
\[
f(x):=\begin{cases}
\gamma_2(\gamma_1^+(x)), & \text{ if } x\in (a,b]\,,\\
\gamma_2(0), & \text{ if } x=a\,.
\end{cases}
\] 
Noting that $f$ has jumps precisely on intervals where $\gamma_1$ is constant, it is not difficult to see that 
$f$ is unfrayed with 
$\cgraph(f)=\gamma([0,1])$. 

 The opposite direction of the proof is also quite simple. To obtain $\gamma$, one needs to connect
the vertical jumps of the unfrayed function $f$ by vertical line segments.
Further details are left to the reader.  
\exclude{
Conversely, let $f$  be 
unfrayed. Therefore, $f$ has
countably many jump locations, which we enumerate as $\{s_k:k\in \N\}=:S$. 
Let $J\colon [a,b]\to \R$  be defined as 
\[
J(x):=\sum_{a\le y\le x} |f(y+)-f(y-)| 
\]
and $v\colon [a,b]\to\R$  as $v(x):=x+J(x)$. Since $f$ is of bounded variation, $J(b)<\infty$.
Define $v^+\colon [a,v(b)]\to[a,b]$  as 
\[
v^+(t):=\begin{cases}
\inf\{x\ge a: v(x)> t\}, & t\in [a,v(b))\,,\\
b, & t=v(b)\,.
\end{cases}
\] 

The pseudo-inverse $v^+$ is continuous, non-decreasing and $v^+(v(x))=x$ for all $x\in [a,b]$.
Set $\gamma_1(t):=v^+(t)$ and 
\[
\gamma_2(t)
:=\begin{cases}
f(v^+(t)), & \text{ if } v^+(t)\notin S\\
f(s_k-)+\frac{t-v(s_k-)}{v(s_k)-v(s_k-)}(f(s_k)-f(s_k-)),& \text{ if } v^+(t)=s_k\in S.
\end{cases}
\]

We have constructed the function $\gamma_2$ so that it is continuous, and $\gamma$ is injective since, $\gamma_2$ is strictly increasing on intervals where $\gamma_1$ is constant.
  So all together $\gamma$ is an arc. 
Furthermore, it is easy to see that  $\cgraph(f)=\gamma([a,v(b)])$ and $\gamma$ can be reparametrized to the domain $[0,1]$. 

Proving $\ell_a^b(f)=\ell(\gamma)$ and $V_a^b(f)=V_0^1(\gamma_2)$ is tedious, but standard and we leave it
to the reader.

\exclude{
We  define a function $\gamma_1^-\colon[a,b]\to \mathbb{R}$
by
\begin{align*}
\gamma_1^-(x):=\begin{cases}
\sup\{t\ge 0\colon \gamma_1(t)<x\}, & \text{ if } x\in (a,b]\,,\\
0 , & \text{ if } x=a
\end{cases}
\end{align*}
Note that $f(x)=\gamma_2(\gamma_1^-(x))=\gamma_2(\gamma_1^+(x))$ for all $x\in [a,b]\setminus S$ and 
$f(x-)=\gamma_2(\gamma_1^-(x)),f(x+)=\gamma_2(\gamma_1^+(x))$ for all 
$x\in S$. 
Take a partition $a=x_0<\dotsb<x_n=b$ and set $t_k^+:=\gamma_1^+(x_k), k=0,\dotsc,n$, $t_k^-:=\gamma_1^-(x_k), k=0,\dotsc,n$.
  Then $0=t_0^-\le t_0^+<\ldots<t_n^-\le t_n^+=1$ and 
\begin{align*}
\ell(\gamma)
\geq&\sum_{k=0}^n \|\gamma(t_k^+)-\gamma(t_k^-)\|_2+\sum_{k=1}^n \|\gamma(t_k^-)-\gamma(t_{k-1}^+)\|_2\\
=&\sum_{k=0}^n \sqrt{|\gamma_1(\gamma_1^+(x_k))-\gamma_1(\gamma_1^-(x_{k}))|^2+|\gamma_2(\gamma_1^+(x_k))-\gamma_2(\gamma_1^-(x_{k}))|^2}\\
&+\sum_{k=1}^n \sqrt{|\gamma_1(\gamma_1^-(x_k))-\gamma_1(\gamma_1^+(x_{k-1}))|^2+|\gamma_2(\gamma_1^-(x_k))-\gamma_2(\gamma_1^+(x_{k-1}))|^2}\\
=&\sum_{k=0}^n \sqrt{0+|f(x_k+)-f(x_{k}-)|^2}\\
&+\sum_{k=1}^n \sqrt{|x_k-x_{k-1}|^2+|f(x_k-)-f(x_{k-1}+)|^2}\,,
\end{align*}
and
\begin{align*}
V_0^1(\gamma_2)
\geq&\sum_{k=0}^n |\gamma_2(t_k^+)-\gamma_2(t_{k}^-)|
+\sum_{k=1}^n |\gamma_2(t_k^-)-\gamma_2(t_{k-1}^+)|\\
=&\sum_{k=0}^n |f(x_k+)-f(x_{k}-)|
+\sum_{k=1}^n |f(x_k-)-f(x_{k-1}+)|\,.
\end{align*}
Taking the supremum over all partitions of $[a,b]$ gives
$\ell(\gamma)\ge \ell_a^b(f)$ and $V_0^1(\gamma_2)\ge V_a^b(f)$, respectively.

Let $\varepsilon>0$. Take a partition $0=t_0<\dotsb<t_n=1$ of $[0,1]$ and   
for all  $k\in \{0,\ldots,n\}$ 
let $t^+_k:=\gamma_1^+(\gamma_1(t_k))$ and $t^-_k:=\gamma_1^-(\gamma_1(t_k))$. W.l.o.g.~we may assume that 
$\{t^+_k\colon 1\le k\le n\}$ and $\{t^-_k\colon 1\le k\le n\}$ are already 
contained in $\{t_0,\ldots, t_n\}$. We partition $\{t_0,\ldots, t_n\}$
into classes having same $\gamma_1$-value,
\[
\{t_0,\ldots, t_n\}
=\bigcup_{j=0}^m\{s_{j,1},\ldots, s_{j,n_j}\}
\]
with $\gamma_1(s_{j,1})=\ldots=\gamma_1(s_{j,n_j})$ and $s_{j,1}<\ldots< s_{j,n_j}$ (if $n_j>1$) for all $j=0,\ldots,m$,  and 
 $s_{j-1,n_j}<s_{j,1}$ for all $j=1,\ldots,m$. Note, however, that 
 $n_j$ may equal $1$ for some or even all $j$. Set $x_j:=\gamma_1(s_{j,1})$ for $j=0,\ldots,m$ and observe
 that $a=x_0<\ldots<x_n=b$.
\begin{align*}
\lefteqn{\sum_{k=1}^n\|\gamma(t_k)-\gamma(t_{k-1})\|}\\
&= \sum_{j=0}^m\sum_{k=1}^{n_j}\|\gamma(s_{j,k})-\gamma(s_{j,k-1})\|
+\sum_{j=1}^{m}\|\gamma(s_{j,1})-\gamma(s_{j-1,n_j})\|\\
&= \sum_{j=0}^m\|\gamma(s_{j,n_j})-\gamma(s_{j,1})\|
+\sum_{j=1}^{m}\|\gamma(s_{j,1})-\gamma(s_{j-1,n_j})\|\\
&=\sum_{j=0}^m|f(x_j+)-f(x_{j}-)|
+\sum_{j=1}^{m}\sqrt{(x_j-x_{j-1})^2+\big(f(x_{j}-)-f(x_{j-1}+)\big)^2}\\
&\le \ell_a^b(f)\,.
\end{align*}
Taking the supremum over all partitions $\{t_0,\ldots,t_n\}$ of $[0,1]$ gives $\ell(\gamma)\le \ell_a^b(\gamma)$.
A similar argument gives $V_0^1(\gamma_2)\le V_a^b(f)$.
}
}
\end{proof}

{
\begin{proposition}\label{prop:monComponentPath}
Let $a<b$, $f_a,f_b\in \R$, and let $\gamma\colon[0,1]\to\R$ be a path connecting $(a,f_a)$ and $(b,f_b)$. Then there is an arc $\bar{\gamma}\colon[0,1]\to\R$ connecting the same points with $\bar{\gamma}_1$ monotonically increasing and such that $\ell(\bar{\gamma})\leq \ell(\gamma)$  and $V_0^1(\bar\gamma_2)\le V_0^1(\gamma_2)$. 
Furthermore, there are at most countably many disjoint intervals $([s_k,t_k])_{0\le k< K}$, 
$K\in \N_0\cup \{\infty\}$,  such that 
$\bar{\gamma}_1$ is constant on each of the $[s_k,t_k]$  and $$\gamma|_{[0,1]\setminus\bigcup_{k=0}^{K-1} (s_k,t_k)}=\bar{\gamma}|_{[0,1]\setminus\bigcup_{k=0}^{K-1} (s_k,t_k)}.$$
\end{proposition}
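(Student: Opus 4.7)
The plan is to build $\bar\gamma$ in two stages: first I construct a continuous path $\bar\gamma^{\ast}$ whose first component is non-decreasing and which agrees with $\gamma$ outside a countable union of specific intervals; then I reparametrize to obtain an arc. Set
\[
\bar\gamma^{\ast}_1(t):=\min\Bigl(b,\max_{s\in[0,t]}\gamma_1(s)\Bigr),\qquad t\in[0,1].
\]
A short check gives that $\bar\gamma^{\ast}_1$ is continuous and non-decreasing with $\bar\gamma^{\ast}_1(0)=a$ and $\bar\gamma^{\ast}_1(1)=b$, using $\gamma_1(0)=a$ and $\gamma_1(1)=b$. Let $\{(s_k,t_k)\}_{k\in K}$ enumerate the maximal open subintervals of $[0,1]$ on which $\bar\gamma^{\ast}_1$ is constant; these form a countable, pairwise disjoint family whose closures are also pairwise disjoint, since $\bar\gamma^{\ast}_1$ must strictly increase between consecutive plateaus (else they would merge into one).

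The central technical step is to show that $\gamma_1(s_k)=\gamma_1(t_k)=\bar\gamma^{\ast}_1(s_k)=:x_k$ for every $k$. This is a running-maximum analysis: maximality of the plateau at level $x_k$ gives $\bar\gamma^{\ast}_1<x_k$ on $[0,s_k)$, hence $\max_{[0,s_k-\varepsilon]}\gamma_1<x_k$ for every $\varepsilon>0$ (under the assumption $x_k<b$; the case $x_k=b$ forces $t_k=1$ and reduces to $\gamma_1(1)=b$). Combined with $\max_{[0,s_k]}\gamma_1=x_k$ and continuity of $\gamma_1$, this forces $\gamma_1(s_k)=x_k$, with an analogous argument for $t_k$. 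I then define $\bar\gamma^{\ast}_2(t):=\gamma_2(t)$ for $t\notin\bigcup_k(s_k,t_k)$, and on each $(s_k,t_k)$ by affine interpolation between $\gamma_2(s_k)$ and $\gamma_2(t_k)$. The resulting $\bar\gamma^{\ast}:=(\bar\gamma^{\ast}_1,\bar\gamma^{\ast}_2)$ is a continuous path from $(a,f_a)$ to $(b,f_b)$ agreeing with $\gamma$ off $\bigcup_k(s_k,t_k)$, and on each $[s_k,t_k]$ equal to the vertical segment from $\gamma(s_k)$ to $\gamma(t_k)$.

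For the length and variation bounds I invoke countable additivity of $\ell$ and $V_0^1$ on the decomposition $[0,1]=A\cup\bigcup_k(s_k,t_k)$ with $A:=[0,1]\setminus\bigcup_k(s_k,t_k)$: on $A$, $\bar\gamma^{\ast}=\gamma$, while on each $[s_k,t_k]$ one has $\ell(\bar\gamma^{\ast}|_{[s_k,t_k]})=|\gamma_2(t_k)-\gamma_2(s_k)|\le\|\gamma(t_k)-\gamma(s_k)\|\le\ell(\gamma|_{[s_k,t_k]})$ and $V_{s_k}^{t_k}(\bar\gamma^{\ast}_2)=|\gamma_2(t_k)-\gamma_2(s_k)|\le V_{s_k}^{t_k}(\gamma_2)$, so summing yields $\ell(\bar\gamma^{\ast})\le\ell(\gamma)$ and $V_0^1(\bar\gamma^{\ast}_2)\le V_0^1(\gamma_2)$. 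Finally, $\bar\gamma^{\ast}$ can fail to be injective only on the \emph{degenerate} plateaus $[s_k,t_k]$ with $\gamma_2(s_k)=\gamma_2(t_k)$, where it is constant; passing to the quotient of $[0,1]$ that collapses each such interval (which is again homeomorphic to $[0,1]$) and re-parametrizing produces the required arc $\bar\gamma$ without affecting length, variation, or the structure of the surviving modification intervals. The main obstacle is the central identification of the plateau endpoints with values of $\gamma_1$, together with a careful execution of the final reparametrization in order to preserve the precise form of the modification intervals asserted in the statement.
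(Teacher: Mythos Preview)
Your construction coincides with the paper's: take $\bar\gamma_1$ to be the running maximum of $\gamma_1$, keep $\bar\gamma_2=\gamma_2$ off the plateaus and interpolate affinely on them. You are in fact more careful than the paper on two points. First, you clip at $b$; without this, $\bar\gamma_1(1)$ can exceed $b$ when $\gamma_1$ overshoots, so the paper's definition need not land at $(b,f_b)$. Second, you notice that on a \emph{degenerate} plateau $[s_k,t_k]$ with $\gamma_2(s_k)=\gamma_2(t_k)$ the affine interpolant is constant, so $\bar\gamma^{\ast}$ is not injective; the paper simply asserts that $\bar\gamma_2$ is strictly monotone on every plateau, which is false in this case.

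Your quotient repair does yield an arc, but it cannot preserve the pointwise identity $\gamma|_{[0,1]\setminus\bigcup(s_k,t_k)}=\bar\gamma|_{[0,1]\setminus\bigcup(s_k,t_k)}$: after reparametrisation, equal parameter values in the domains of $\gamma$ and $\bar\gamma$ no longer correspond. In fact no repair exists in general. Take $\gamma_2\equiv 0$ and $\gamma_1$ piecewise linear with nodes $(0,0)$, $(\tfrac14,\tfrac12)$, $(\tfrac12,0)$, $(1,1)$. Then $V_0^1(\gamma_2)=0$ forces $\bar\gamma_2\equiv 0$, hence any arc $\bar\gamma$ with non-decreasing first component must have $\bar\gamma_1$ strictly increasing; the intervals $[s_k,t_k]$ on which $\bar\gamma_1$ is constant are then degenerate, $\bigcup_k(s_k,t_k)=\emptyset$, and the asserted identity becomes $\bar\gamma=\gamma$, contradicting monotonicity of $\bar\gamma_1$. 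So the proposition as literally stated fails when a degenerate plateau occurs. The paper's applications use only the image inclusion $\bar\gamma([0,1])\subseteq\gamma([0,1])\cup(\text{countably many vertical segments})$, which does survive your quotient, so your argument is adequate for what is actually needed and is in any event more scrupulous than the paper's own sketch.
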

\begin{proof} 
 Let $\bar{\gamma}_1\colon[0,1]\to[a,b]$ be defined by $\bar{\gamma}_1(t):=\sup_{0\leq s\leq t}\gamma_1(t)$. Then $\bar{\gamma}_1$ is continuous and monotonically increasing and therefore there are  at most countably many (pairwise disjoint) intervals $([s_k,t_k])_{0\le k<K}$ on which it is constant, and on the complement of $\bigcup_{ k=0}^{K-1} [s_k,t_k]$ it is strictly increasing, for some $K\in \N_0\cup \{\infty\}$.  Then we set
  \begin{align*}
\bar{\gamma}_2(t):=
\begin{cases}
\gamma_2(t), & \text{if }t\notin \bigcup_{k=0}^{K-1 } (s_k,t_k),\\
\gamma_2(s_k)+\frac{\gamma_2(t_k)-\gamma_2(s_k)}{t_k-s_k}(t-s_k),&  \text{if } t\in(s_k,t_k).
\end{cases}
  \end{align*}
  Then $\bar{\gamma}:=(\bar{\gamma}_1,\bar{\gamma}_2)$ is an arc, since 
  $\bar\gamma_2$ is strictly monotone on the intervals where $\bar\gamma_1$ is constant, and $\bar{\gamma}(0)=(a,f_a)$ and $\bar{\gamma}(1)=(b,f_b)$. 
 Using this definition we leave to the reader the easy task to verify that
$\bar{\gamma}$ satisfies the claim of the proposition.
\exclude{
For the remainder of the proof we assume $K=\infty$, the finite case being easier.
  We show that if $t\notin\bigcup_{k=0}^\infty (s_k,t_k)$, then $\gamma_1(t)=\bar{\gamma}_1(t)$: Assume that $t$ is an isolated point of $[0,1]\setminus\bigcup_{k=0}^\infty (s_k,t_k)$. Then there must be intervals $(s_i,t_i)$ and $(s_j,t_j)$, such that $t_i=t=s_j$, and $\bar{\gamma}_1$ is constant on these intervals. But this means that $\bar{\gamma}$ would be constant on $[s_i,t_j]$, which contradicts the pairwise disjointness of the intervals. So $t$ is an accumulation point of $[0,1]\setminus\bigcup_{k=0}^\infty (s_k,t_k)$, which forms a perfect, thus uncountable, set. Assume that there is a  subsequence $(t_{k_n})_{n\geq 0}$ approaching $t$ from below. 
  As the restriction of $\bar{\gamma}_1$ to $[0,1]\setminus\bigcup_{k=0}^\infty (s_k,t_k)$ 
  takes the same value at most twice (on the points $\{s_k,t_k\}$ for each $k$), we may 
  assume that the $t_{k_n}$ are such that 
  $$
  \sup_{0\leq s\leq t_{k_n}}\gamma_1(s)
  =\bar{\gamma}_1(t_{k_n})<\bar{\gamma}_1(t_{k_{n+1}})=\sup_{0\leq s\leq t_{k_n}}\gamma_1(s)\,.
  $$
  Hence, there is an increasing sequence $(s_{k_n})_{n\geq 0}$ converging to $t$ with $$
  \sup_{0\leq s\leq t_{k_n}}\gamma_1(s)<\gamma_1(s_{k_n}).$$ By the continuity of $
  \gamma_1$ and $\bar{\gamma}_1$, $\gamma_1(t)=\bar{\gamma}_1(t)$ follows. If $t$ is 
  approximated from above, we can work with a similar argument. 
  Next assume that $t$ is an interior point of $[0,1]\setminus\bigcup_{k=0}^\infty (s_k,t_k)$ and $t\ne 0$. Then we can choose a strictly increasing sequence $(r_n)_{n\ge 0}$, such that $(\gamma_1(r_n))_{n\ge 0}$ is strictly increasing, too. As before, we infer that $\bar{\gamma}_1(t)=\sup_{n\ge 0}\gamma_1(r_n)=\gamma_1(t)$. Finally, for $t=0$ we have $\bar{\gamma}_1(0)=\gamma_1(0)$.

From the above, we deduce that
  $$\gamma|_{[0,1]\setminus\bigcup_{k=0}^\infty (s_k,t_k)}=\bar{\gamma}|_{[0,1]\setminus\bigcup_{k=0}^\infty (s_k,t_k)}.$$
Let now a subdivision $0=r_0<r_1<\dotsb<r_n=1$ of $[0,1]$ be given. If there are subdivision points $r_i$ which are contained in some interval $[s_{k(i)},t_{k(i)}]$, refine the subdivision with the according endpoints ${s_{k(i)},t_{k(i)}}$ to get a new subdivision $(\tau_i)_{i=0}^N$. 
We analyze the terms in the sum
\begin{align*}
\sum_{i=0}^{n-1}\big\|\gamma(r_{i+1})-\gamma(r_{i}))\big\|_2,
\end{align*}
corresponding to the subdivision $(r_i)_{i=0}^n$. W.l.o.g.~assume the situation that 
$r_i,r_{i+1}$ are in an interval $[s_{k(i)},t_{k(i)}]$, $r_{i+2}\in [s_{k(i+2)},t_{k(i+2)}]$ and $r_{i-1}, r_{i+3}\in [0,1]\setminus\bigcup_{k=0}^\infty [s_k,t_k]$, then
\begin{align}\label{eq:sqrtMadness}
\lefteqn{\!\!\!\!\!\!\big\|\gamma(r_{i})-\gamma(r_{i-1})\big\|_2
+\big\|\gamma(r_{i+1})-\gamma(r_{i})\big\|_2}\nonumber\\
&+\big\|\gamma(r_{i+2})-\gamma(r_{i+1})\big\|_2
+\big\|\gamma(r_{i+3})-\gamma(r_{i+2})\big\|_2\nonumber\\
\leq &\big\|\gamma(s_{k(i)})-\gamma(r_{i-1})\big\|_2
+\big\|\gamma(r_{i})-\gamma(s_{k(i)})\big\|_2
+\big\|\gamma(r_{i+1})-\gamma(r_{i})\big\|_2\\
&+\big\|\gamma(t_{k(i)})-\gamma(r_{i+1})\big\|_2
+\big\|\gamma(s_{k(i+2)})-\gamma(t_{k(i)})\big\|_2
+\big\|\gamma(r_{i+2})-\gamma(s_{k(i+2)})\big\|_2\nonumber\\
&+\big\|\gamma(t_{k(i+2)})-\gamma(r_{i+2})\big\|_2
+\big\|\gamma(r_{i+3})-\gamma(t_{k(i+2)})\big\|_2\nonumber
\end{align} 
where the right hand side contains all the additional summands that appear when turning to the refined subdivision $(\tau_i)_{i=1}^N$. If we consider the components of $\bar{\gamma}$ on the same refined subdivision, we get 
\begin{align*}
\big\|\gamma&(s_{k(i)})-\gamma(r_{i-1}))\big\|_2+|\bar{\gamma}_2(r_{i})-\bar{\gamma}_2(s_{k(i)})|+|\bar{\gamma}_2(r_{i+1})-\bar{\gamma}_2(r_{i})|\\
&+|\bar{\gamma}_2(t_{k(i)})-\bar{\gamma}_2(r_{i+1})|
+\big\|\gamma(s_{k(i+2)})-\gamma(t_{k(i)})\big\|_2
+|\bar{\gamma}_2(r_{i+2})-\bar{\gamma}_2(s_{k(i+2)})|\\
&+|\bar{\gamma}_2(t_{k(i+2)})-\bar{\gamma}_2(r_{i+2})|+\big\|\gamma(r_{i+3})-\gamma(t_{k(i+2)})\big\|_2\\
=&\,\big\|\gamma(s_{k(i)})-\gamma(r_{i-1})\big\|_2+|{\gamma}_2(t_{k(i)})-{\gamma}_2(s_{k(i)})|
+\big\|\gamma(s_{k(i+2)})-\gamma(t_{k(i)})\big\|_2\\
&+|{\gamma}_2(t_{k(i+2)})-{\gamma}_2(s_{k(i+2)})|
+\big\|\gamma_1(r_{i+3})-\gamma_1(t_{k(i+2)})\big\|_2,
\end{align*}
which is smaller than the right hand side of \eqref{eq:sqrtMadness}. Hence, on the refined subdivision $(\tau_i)_{i=1}^N$, we infer that
\begin{align*}
\sum_{i=0}^{N-1}\big\|\bar{\gamma}(\tau_{i+1})-\bar{\gamma}(\tau_{i})\big\|_2
\leq \sum_{i=0}^{N-1}\big\|\gamma(\tau_{i+1})-\gamma(\tau_{i})\big\|_2.
\end{align*}
As this can be done for all subdivisions, it follows that 
$\ell(\bar\gamma)\leq\ell(\gamma)$.
}
\end{proof}
}
We give another simple result on the relation between the length of a path and 
the total variation of its component functions.

\begin{proposition}\label{prop:estimate-len-var}
Let $\gamma\colon [a,b]\to \R^2$ be a path in $\R^2$.   Then for $j\in \{1,2\}$
\[
V_a^b(\gamma_j)\le \ell(\gamma)\le V_a^b(\gamma_1)+V_a^b(\gamma_2)\quad
\text{and}\quad
V_a^b(\gamma_1)+V_a^b(\gamma_{2})\leq \sqrt{2}\ell(\gamma).
\]
Let
$f\colon[a,b]\to\R$ be a function. Then 
\[V_a^b(f)\le \ell_a^b(f)\leq b-a+V_a^b(f)
\quad\text{and}\quad b-a+V_a^b(f)\le \sqrt{2}\ell_a^b(f)\,.
\]

\end{proposition}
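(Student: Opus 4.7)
The plan is to reduce both statements to the single elementary chain
\[
\max(|x|,|y|)\ \le\ \sqrt{x^{2}+y^{2}}\ \le\ |x|+|y|\ \le\ \sqrt{2}\,\sqrt{x^{2}+y^{2}}\qquad(x,y\in\R),
\]
applied termwise on an arbitrary partition $a=t_{0}<\dots<t_{n}=b$ and then followed by passing to suprema. No geometric input beyond this is required.

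For the three path inequalities I would fix such a partition and write $\|\gamma(t_{k})-\gamma(t_{k-1})\|_{2} = \sqrt{(\gamma_{1}(t_{k})-\gamma_{1}(t_{k-1}))^{2}+(\gamma_{2}(t_{k})-\gamma_{2}(t_{k-1}))^{2}}$. The leftmost inequality, applied coordinatewise and summed, gives
\[
\sum_{k=1}^{n}|\gamma_{j}(t_{k})-\gamma_{j}(t_{k-1})|\ \le\ \sum_{k=1}^{n}\|\gamma(t_{k})-\gamma(t_{k-1})\|_{2}\ \le\ \ell(\gamma),
\]
for $j\in\{1,2\}$; taking the sup over partitions on the left yields $V_{a}^{b}(\gamma_{j})\le\ell(\gamma)$. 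The middle inequality, summed termwise, gives
\[
\sum_{k=1}^{n}\|\gamma(t_{k})-\gamma(t_{k-1})\|_{2}\ \le\ \sum_{k=1}^{n}|\gamma_{1}(t_{k})-\gamma_{1}(t_{k-1})|+\sum_{k=1}^{n}|\gamma_{2}(t_{k})-\gamma_{2}(t_{k-1})|\ \le\ V_{a}^{b}(\gamma_{1})+V_{a}^{b}(\gamma_{2}),
\]
and taking the sup over partitions of the left side yields $\ell(\gamma)\le V_{a}^{b}(\gamma_{1})+V_{a}^{b}(\gamma_{2})$.

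The only step that needs slightly more care is $V_{a}^{b}(\gamma_{1})+V_{a}^{b}(\gamma_{2})\le\sqrt{2}\,\ell(\gamma)$, because both variations must be approximated on the \emph{same} partition. Given $\varepsilon>0$ I would choose partitions $P_{1},P_{2}$ with $V_{a}^{b}(\gamma_{j})\le\sum_{P_{j}}|\gamma_{j}(t_{k})-\gamma_{j}(t_{k-1})|+\varepsilon/2$ and pass to their common refinement $P$; under refinement the partition sums can only increase, so both variations are still approximated to within $\varepsilon/2$ on $P$. The rightmost inequality, applied termwise on $P$ and summed, then gives
\[
V_{a}^{b}(\gamma_{1})+V_{a}^{b}(\gamma_{2})\ \le\ \sqrt{2}\sum_{P}\|\gamma(t_{k})-\gamma(t_{k-1})\|_{2}+\varepsilon\ \le\ \sqrt{2}\,\ell(\gamma)+\varepsilon,
\]
and letting $\varepsilon\to 0$ finishes the path half. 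This common-refinement bookkeeping is the only genuine obstacle in the whole proof.

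For the graph-length inequalities for $f\colon[a,b]\to\R$ I would apply the same three termwise estimates directly inside the definition of $\ell_{a}^{b}(f)$, with $\sqrt{(x_{k}-x_{k-1})^{2}+(f(x_{k})-f(x_{k-1}))^{2}}$ playing the role of $\|\gamma(t_{k})-\gamma(t_{k-1})\|_{2}$. The situation is even simpler here because $\sum_{k}(x_{k}-x_{k-1})=b-a$ is constant across partitions, so the ``horizontal variation'' is automatic and no common-refinement argument is needed; one merely reads off $V_{a}^{b}(f)\le\ell_{a}^{b}(f)$, $\ell_{a}^{b}(f)\le (b-a)+V_{a}^{b}(f)$, and $(b-a)+V_{a}^{b}(f)\le\sqrt{2}\,\ell_{a}^{b}(f)$ from the three termwise bounds followed by a single supremum.
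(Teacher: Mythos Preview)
Your proof is correct and takes essentially the same approach as the paper, which simply states ``we leave the simple proof to the reader'' (the authors' own excluded sketch also works termwise on partitions via the elementary chain $|x_j|\le\sqrt{x^2+y^2}\le|x|+|y|$). Your treatment is in fact slightly more careful than the paper's sketch, since you explicitly handle the common-refinement step needed for the $\sqrt{2}$ inequality.
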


 Again we leave the simple proof to the reader.
\exclude{
\begin{proof} 
Consider a partition $a=t_0<t_1<\ldots<t_n=b$ of $[a,b]$. We write $x_k:=\gamma_1(t_k)$ and $y_k:=\gamma_2(t_k)$, $k=0,\ldots,n$.
\begin{align*}
\sum_{k=1}^n |\gamma_j(t_k)-\gamma_j(t_{k-1})|
&=\sum_{k=1}^n \|\gamma(t_k)-\gamma(t_{k-1})\|\\
&=\sum_{k=1}^n \sqrt{(x_k-x_{k-1})^2+(y_k-y_{k-1})^2}\\
&\le\sum_{k=1}^n \big(|x_k-x_{k-1}|+|y_k-y_{k-1}|\big) \le V_a^b(\gamma_1)+V_a^b(\gamma_2)\,.
\end{align*}
Taking the supremum over all partitions finishes the proof.
\end{proof}}

\begin{proposition}\label{prop:variationLeftLimit}
Let $f\colon[a,b]\to\R$ have bounded variation. Then $\overrightarrow{f}\colon[a,b]\to\R, t\mapsto f(t-)$ exists, $V_a^b(\overrightarrow{f})\leq V_a^b(f)$  and $\ell_a^b(\overrightarrow{f})\leq \ell_a^b(f)$.
\end{proposition}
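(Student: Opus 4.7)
My plan is a standard $\varepsilon$-approximation from the left, exploiting that a bounded variation function is regulated. The underlying observation is that $\overrightarrow{f}(t)=f(t-)$ is well-defined for every $t\in (a,b]$; I extend it by $\overrightarrow{f}(a):=f(a)$ so that $\overrightarrow{f}$ is defined on all of $[a,b]$.

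For the variation estimate I fix a partition $a=x_0<x_1<\dotsb<x_n=b$ and $\varepsilon>0$, and for each $k=1,\dotsc,n$ pick $z_k\in (x_{k-1},x_k)$ with $|f(z_k)-\overrightarrow{f}(x_k)|<\frac{\varepsilon}{2n}$, setting $z_0:=a$. Since $z_0<z_1<\dotsb<z_n$ is an admissible partition of a subinterval of $[a,b]$, applying the triangle inequality to each $|\overrightarrow{f}(x_k)-\overrightarrow{f}(x_{k-1})|$ via the intermediate values $f(z_k)$ and $f(z_{k-1})$ and summing yields
\[
\sum_{k=1}^n |\overrightarrow{f}(x_k)-\overrightarrow{f}(x_{k-1})| \leq \sum_{k=1}^n |f(z_k)-f(z_{k-1})|+\varepsilon \leq V_a^b(f)+\varepsilon.
\]
Letting $\varepsilon\to 0$ and taking the supremum over all partitions gives $V_a^b(\overrightarrow{f})\leq V_a^b(f)$.

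For the length estimate the same scheme works, except that one additionally requires $|z_k-x_k|<\frac{\varepsilon}{2n}$ (which is compatible with the left-limit approximation, since one can simply shrink the neighbourhood on which $z_k$ is chosen), and applies the triangle inequality in $\R^2$ to the chord vectors $(x_k,\overrightarrow{f}(x_k))-(x_{k-1},\overrightarrow{f}(x_{k-1}))$; each such chord is dominated by $\sqrt{(z_k-z_{k-1})^2+(f(z_k)-f(z_{k-1}))^2}$ plus two error vectors of norm at most $\sqrt{2}\,\frac{\varepsilon}{2n}$, after which one sums and passes to $\varepsilon\to 0$ exactly as before.

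I do not foresee a real conceptual obstacle; the only item requiring care is the bookkeeping of the error terms and the observation that the approximating points $z_k$ form a strictly increasing sequence contained in $[a,b]$, both of which are automatic from the construction. The whole argument is precisely of the kind the authors of the excerpt have been leaving to the reader (cf.\ the \texttt{exclude} blocks in Propositions~\ref{prop:regulated-var} and~\ref{prop:variation-length}), so I would expect this proposition to be dispatched in a few lines with a reference back to this $\varepsilon$-approximation.
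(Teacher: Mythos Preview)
Your argument is correct. The $\varepsilon$-approximation works: the points $z_0=a<z_1<\dotsb<z_n$ are strictly increasing in $[a,b]$ by construction, each error term is counted at most twice, and the total error is bounded by $\varepsilon$; the length case goes through in the same way once you control both coordinates.

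Your route, however, is genuinely different from the paper's. The paper does not approximate at all: it invokes Proposition~\ref{prop:regulated-var}, which rewrites $V_a^b(f)$ (resp.\ $\ell_a^b(f)$) as a supremum of sums built from the one-sided limits $f(x_k\pm)$, and then observes that $\overrightarrow{f}(x_k-)=f(x_k-)$ and $\overrightarrow{f}(x_k+)=f(x_k+)$. A single application of the triangle inequality, $|f(x_k+)-f(x_k-)|\le |f(x_k+)-f(x_k)|+|f(x_k)-f(x_k-)|$, then collapses the $f$-sum onto the corresponding $\overrightarrow{f}$-sum term by term, with no $\varepsilon$ needed. Your approach is more self-contained (it does not rely on Proposition~\ref{prop:regulated-var}), while the paper's is cleaner once that proposition is in hand and makes the structural reason for the inequality---the loss of the ``frayed'' part $|f(x_k)-f(x_k-)|+|f(x_k+)-f(x_k)|-|f(x_k+)-f(x_k-)|$ at each node---transparent.
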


\begin{proof}
As $f$ is a function of bounded variation, the left and right limits exist, so $\overrightarrow{f}$ can be defined and is unfrayed.
By Proposition \ref{prop:regulated-var},
\begin{align*}
V_0^1(f)&=\sup\sum_{k=1}^n \left(\big|f(x_{k-1}+)-f(x_{k-1})\big|+ \big|f(x_k-)-f(x_{k-1}+)\big|+\big|f(x_k)-f(x_k-)\big|\right)\\
&=\sup\sum_{k=1}^{n-1} \left(\big|f(x_{k}+)-f(x_{k})\big|+\big|f(x_k)-f(x_k-)\big|\right)
+\sum_{k=1}^n \big|f(x_k-)-f(x_{k-1}+)\big|\\
&\quad\qquad+\big|f(x_{0}+)-f(x_{0})\big|+\big|f(x_n)-f(x_n-)\big|\\
&\ge\sup\sum_{k=1}^{n-1} \left(\big|f(x_{k}+)-f(x_k-)\big|\right)
+\sum_{k=1}^n \big|f(x_k-)-f(x_{k-1}+)\big|\\
&\quad\qquad+\big|f(x_{0}+)-f(x_{0})\big|+\big|f(x_n)-f(x_n-)\big|\\
&\ge\sup\sum_{k=1}^{n-1} \left(\big|\overrightarrow{f}(x_{k}+)-\overrightarrow{f}(x_k-)\big|\right)
+\sum_{k=1}^n \big|\overrightarrow{f}(x_k-)-\overrightarrow{f}(x_{k-1}+)\big|\\
&\quad\qquad+\big|\overrightarrow{f}(x_{0}+)-\overrightarrow{f}(x_{0})\big|+\big|\overrightarrow{f}(x_n)-\overrightarrow{f}(x_n-)\big|\\
&=V_0^1(\overrightarrow{f})\,,
\end{align*}
as $\big|f(x_n)-f(x_n-)\big|\ge 0=\big|\overrightarrow{f}(x_n)-\overrightarrow{f}(x_n-)\big|$.

A similar argument shows the claim about the lengths of the graphs.
\end{proof}

\begin{proposition}\label{prop:permeableOnSubinterval}
Let $g\colon[0,1]\to\R$ be a function with permeable graph. Then for any subinterval $[a,b]\subseteq[0,1]$, $g|_{[a,b]}$ has a permeable graph on $[a,b]$.
\end{proposition}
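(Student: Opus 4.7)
The plan is to apply the permeability of $g$ on the full interval $[0,1]$ to the very same values $y$ and $\delta$, and then restrict the resulting function to $[a,b]$, adjusting only the two endpoint values. More precisely, given $y\in\R$ and $\delta>0$, I would pick, by permeability of $g$, a function $f\colon[0,1]\to\R$ with $f(0)=f(1)=y$, $V_0^1(f)<\delta$, and $\overline{\{t\in[0,1]\colon f(t)=g(t)\}}$ at most countable, and then define $\tilde f\colon[a,b]\to\R$ by $\tilde f(a):=\tilde f(b):=y$ and $\tilde f(t):=f(t)$ for $t\in(a,b)$.

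With this choice the boundary condition $\tilde f(a)=\tilde f(b)=y$ is built in, and the intersection clause is handled by the inclusion
\[
\{t\in[a,b]\colon\tilde f(t)=g(t)\}\subseteq \{t\in[0,1]\colon f(t)=g(t)\}\cup\{a,b\},
\]
whose closure is at most countable because adjoining two points preserves countability. What really requires work is the variation bound $V_a^b(\tilde f)<\delta$, which I expect to be the only genuine obstacle: a priori, redefining $f$ at the endpoints could inflate the variation by a quantity of order $|f(a)-y|+|f(b)-y|$, which has no obvious direct control.

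The key observation that resolves this is that $f(0)=f(1)=y$, so the new endpoint jumps are secretly jumps of $f$ on $[0,t_1]$ and $[t_{n-1},1]$. Concretely, for any partition $a=t_0<t_1<\dotsb<t_n=b$ with $n\ge 2$, one has $|f(t_1)-y|=|f(t_1)-f(0)|\le V_0^{t_1}(f)$ and $|y-f(t_{n-1})|=|f(1)-f(t_{n-1})|\le V_{t_{n-1}}^1(f)$, so that
\[
\sum_{k=1}^n|\tilde f(t_k)-\tilde f(t_{k-1})|\le V_0^{t_1}(f)+V_{t_1}^{t_{n-1}}(f)+V_{t_{n-1}}^1(f)=V_0^1(f)<\delta.
\]
The case $n=1$ is trivial since $\tilde f(a)=\tilde f(b)$, and the middle variation is $0$ when $n=2$. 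Taking the supremum over partitions yields $V_a^b(\tilde f)\le V_0^1(f)<\delta$, which completes the verification of the three defining conditions of a permeable graph for $g|_{[a,b]}$.
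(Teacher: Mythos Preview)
Your proof is correct and uses the same construction as the paper: restrict the function obtained from permeability of $g$ on $[0,1]$ to $(a,b)$ and reset the two endpoint values to $y$. The only difference is in how the variation bound is established. The paper applies permeability with $\tfrac{\delta}{3}$ in place of $\delta$ and then uses the cruder estimate $V_a^b(\tilde f)\le |f(a)-y|+V_a^b(f)+|f(b)-y|<3\cdot\tfrac{\delta}{3}=\delta$. Your observation that $f(0)=f(1)=y$ allows the two endpoint jumps to be absorbed into $V_0^{t_1}(f)$ and $V_{t_{n-1}}^1(f)$ is sharper: it yields $V_a^b(\tilde f)\le V_0^1(f)$ directly and avoids the $\delta/3$ device altogether. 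Both arguments are equally short; yours gives a slightly stronger inequality.
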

\begin{proof}
Let $y\in \R$ and $\delta>0$ be given. 
Since $g$ has a permeable graph, there is a function $\hat{f}\colon[0,1]\to\R$ such that $\hat{f}(0)=\hat{f}(1)=y$, $V_0^1(\hat{f})<\frac{\delta}{3}$ and $\overline{\{t\in [0,1]:g(t)=\hat{f}(t)\}}$ is countable. 
Hence, $|\hat{f}(a)-y|<\frac{\delta}{3}$ and $|\hat{f}(b)-y|<\frac{\delta}{3}$. 
Define $f\colon[a,b]\to\R$ by
\begin{align*}
t\mapsto\begin{cases}
y, & t\in \{a,b\},\\
\hat{f}(t), & t\in (a,b).
\end{cases}
\end{align*}
Then $V_a^b(f)\leq |\hat{f}(a)-y|+V_a^b(\hat{f})+|\hat{f}(b)-y|<\delta$ and 
\[\overline{\{t\in [a,b]:g(t)={f}(t)\}}\subseteq \overline{\{t\in [0,1]:g(t)=\hat{f}(t)\}}\cup\{a,b\}\,,\]
so $\overline{\{t\in [a,b]:g(t)={f}(t)\}}$ is countable.
\end{proof}

{
\begin{theorem}\label{th:harmony}
Let $g\colon [a,b]\to \R$ be a continuous function. Then $g$  has permeable graph iff the graph of $g$ is a permeable subset
of $\R^2$.
\end{theorem}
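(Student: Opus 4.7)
I will establish both directions via the correspondence between arcs in $\R^2$ with monotone nondecreasing first coordinate and unfrayed scalar functions given in Propositions~\ref{prop:variation-length} and~\ref{prop:monComponentPath}, together with the length--variation estimate $\ell_a^b(f)\le(b-a)+V_a^b(f)$ of Proposition~\ref{prop:estimate-len-var} and the elementary pointwise inequality $\sqrt{x^2+(u+v)^2}\le\sqrt{x^2+u^2}+|v|$ applied to partition sums.

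For the implication that $g$ has permeable graph implies $\graph(g)$ is a permeable subset, fix $p=(p_1,p_2),q=(q_1,q_2)\in\R^2$ and $\varepsilon>0$. After handling the easy cases $p_1=q_1$ (vertical segment, at most one intersection) and those where one of $p_1,q_1$ lies outside $[a,b]$ (portions of paths outside the strip avoid $\graph(g)$ automatically), I reduce to $p_1<q_1$ in $[a,b]$ and exploit Proposition~\ref{prop:permeableOnSubinterval}. Let $h$ be the affine function on $[p_1,q_1]$ with $h(p_1)=p_2$, $h(q_1)=q_2$, and set $\tilde g:=g-h$. A key preliminary step is to establish that $\tilde g$ inherits the permeable-graph property from $g$ on $[p_1,q_1]$; given this, I apply permeability of $\tilde g$ at $y=0$ with tolerance $\varepsilon$ to produce $\phi$ with $\phi(p_1)=\phi(q_1)=0$, $V_{p_1}^{q_1}(\phi)<\varepsilon$ and countable closure of $\{t:\phi(t)=\tilde g(t)\}$. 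Setting $f:=h+\phi$ and applying Proposition~\ref{prop:variation-length}, I obtain an arc from $p$ to $q$ of length bounded by $\|p-q\|+V(\phi)<\|p-q\|+\varepsilon$; closedness of $\graph(g)$ then ensures that the intersection of $\cgraph(f)$ with $\graph(g)$ is a closed countable set, since it projects onto the $t$-axis into $\{t:\phi(t)=\tilde g(t)\}$ together with countably many jump points.

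For the converse, given $y\in\R$ and $\delta>0$ I take $p=(a,y),q=(b,y)$ and apply subset permeability with sufficiently small $\varepsilon$ to obtain an arc $\gamma$ from $p$ to $q$ with $\ell(\gamma)<(b-a)+\varepsilon$ and countable closure of $\gamma\cap\graph(g)$. Proposition~\ref{prop:monComponentPath} yields $\bar\gamma$ with nondecreasing first component, $\ell(\bar\gamma)\le\ell(\gamma)$ and $V_0^1(\bar\gamma_2)\le V_0^1(\gamma_2)$; the vertical bridging segments contribute only countably many additional points to the intersection with $\graph(g)$. Proposition~\ref{prop:variation-length} then delivers an unfrayed $f$ with $f(a)=f(b)=y$ and $V_a^b(f)=V_0^1(\bar\gamma_2)$, whose intersection with $g$ embeds into the closed set $\pr_1(\cgraph(f)\cap\graph(g))$ and hence has countable closure.

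The hard part is the quantitative control of the variation in both directions. In the forward implication the preliminary lemma transferring permeability from $g$ to $g-h$ is nontrivial, since the naive substitution $f\mapsto f-h$ breaks the matched-endpoint constraint of Definition~\ref{def:permeable-graph}; it must be argued by constructing the perturbation directly, using the freedom to place an auxiliary jump outside the relevant intersection set and invoking Proposition~\ref{prop:variationLeftLimit} to clean up left limits. In the converse, Proposition~\ref{prop:estimate-len-var} alone gives only $V\le\sqrt 2\,\ell-(b-a)<(\sqrt 2-1)(b-a)+\sqrt 2\,\varepsilon$, which need not be smaller than an arbitrary $\delta$; the remedy is to apply subset permeability not just to $((a,y),(b,y))$ but iteratively along a fine subdivision of $[a,b]$ with well-chosen intermediate $y$-levels, and to exploit the monotonization step in Proposition~\ref{prop:monComponentPath} to keep the second-coordinate variation of the concatenated arc below $\delta$.
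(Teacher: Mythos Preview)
Your proposal has a genuine gap in each direction, and in both cases the missing idea is precisely the one the paper supplies.

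\textbf{Forward direction.} The reduction to showing that $\tilde g=g-h$ has permeable graph (at $y=0$) is the crux, and your suggested fix does not work. Starting from permeability of $g$ at the value $p_2$ you obtain $\psi$ with $\psi(p_1)=\psi(q_1)=p_2$, $V(\psi)<\eta$, and countable closed intersection with $g$; but then $f=\psi$ with an added jump at $q_1$ traces an arc of length roughly $(q_1-p_1)+|q_2-p_2|$, the $\ell^1$ distance, not $\|p-q\|+\varepsilon$. Any attempt to patch this by subdividing $[p_1,q_1]$ into $N$ pieces and using permeability of $g$ on each still accumulates a total vertical correction of order $|q_2-p_2|$, because subtracting the affine $h$ on each piece contributes $|q_2-p_2|/N$ to the variation and these add up. In short, the claim ``$g$ permeable $\Rightarrow g-h$ permeable'' is true \emph{a posteriori} (it follows from the theorem via a shear of $\R^2$), but you cannot use it here, and no direct argument is indicated.

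\textbf{Converse direction.} Your own computation shows that the crude bound yields $V_a^b(f)\le(\sqrt2-1)(b-a)+\sqrt2\,\varepsilon$, and subdividing $[a,b]$ into $N$ equal pieces with the \emph{same} level $y$ gives, on each piece, $V\le(\sqrt2-1)(b-a)/N+\sqrt2\,\varepsilon/N$, which sums back to the same thing. ``Well-chosen intermediate $y$-levels'' does not help unless you already know something about where $g$ sits, and nothing in the hypothesis gives you that.

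The paper's proof bypasses both obstacles with a single device: a Fubini argument shows that almost every line parallel to the segment $pq$ (respectively, almost every horizontal level near $y$) meets $\graph(g)$ in a set of Lebesgue measure zero. One picks such a line within $\delta/4$ of the target, covers the null intersection set by finitely many intervals $[c_k,d_k]$ of arbitrarily small total length, and only \emph{there} invokes the permeability hypothesis. Because the $[c_k,d_k]$ are short, the crude length--variation estimate on each piece already gives small total variation (respectively small excess length). This localization step is the essential idea your sketch is missing.
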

}

{
\begin{proof}
We assume that $a=0, b=1$. First we show that the permeable graph property of a continuous function $g\colon[0,1]\to\R$ implies the permeability of $\graph(g)$ as a subset of $\R^2$.

Any vertical line segment connecting two points in the plane has at most one intersection point with the graph of $g$ and is trivially their shortest connection. Hence we only concentrate on line segments between points $(x_1,y_1), (x_2,y_2)$ with $x_1\neq x_2$. In particular, without loss of generality, we may assume that 
$ (x_1,y_1)=(0,f_a), (x_2,y_2)=(1,f_b)$.

Let $\delta>0$ be given. The graph of $g$ is a zero set with respect to the two-dimensional Lebesgue-measure $\lambda^{(2)}$, which one can see by applying Fubini's theorem. Hence, also 
\begin{align*}
\int_{\left\{(t,u+f_a+(f_b-f_a)t):\, u\in \left[-\frac{\delta}{4},\frac{\delta}{4}\right], t\in [0,1]\right\}}\mathbbm{1}_{\graph(g)} d\lambda^{(2)}=0.
\end{align*}
Again by Fubini's theorem we get that for some $u\in \left[-\frac{\delta}{4},\frac{\delta}{4}\right]$ (and actually $\lambda$-almost all $u$),  $\lambda(\{t\in [0,1]: u+f_a+(f_b-f_a)t=g(t)\})=0$. Moreover, we may choose $u$ so that $u+f_a\neq g(0)$ and $u+f_b\neq g(1)$. We define $\phi(t):=u+f_a+(f_b-f_a)t$ and $F:=\{t\in [0,1]:\phi(t)=g(t)\}$. As $[0,1]\setminus F$ is an open subset of $[0,1]$, it can be written as disjoint union $[0,a_0)\cup(b_0,1]\cup\bigcup_{k=1}^\infty (a_k,b_k)$. Since $F$ has measure zero, we can find $K>0$ such that $$F\subseteq[0,1]\setminus\left([0,a_0)\cup(b_0,1]\cup\bigcup_{k=1}^K (a_k,b_k)\right)=:\bigcup_{k=0}^K [c_k,d_k]$$ and $\lambda\left(\bigcup_{k=0}^K [c_k,d_k]\right)=\sum_{k=0}^K(d_k-c_k)< \frac{\delta}{4(\|(1,f_b-f_a)\|_1-\|(1,f_b-f_a)\|_2)+1}$. As $g$ is permeable on each interval $[c_k,d_k]$ by Proposition \ref{prop:permeableOnSubinterval}, it follows that there is a function $f_{c_k,d_k}$, such that $V_{c_k}^{d_k}(f_{c_k,d_k})<\frac{\delta}{4(K+1)}$, $f_{c_k,d_k}(c_k)=f_{c_k,d_k}(d_k)=\phi(c_k)$ and $\overline{\{t\in [0,1]:f_{c_k,d_k}(t)=g(t)\}}$ is countable. We define the function $f\colon [0,1]\to\R$ by
\begin{align*}
t\mapsto\begin{cases}
\phi(t), & t\notin \bigcup_{k=0}^K (c_k,d_k],\\
f_{c_k,d_k}(t), & t\in (c_k,d_k].
\end{cases}
\end{align*}
Then $f(0)=f_a+u, f(1)=f_b+u$ and $\overline{\{t\in [0,1]:f(t)=g(t)\}}$ is countable as we defined the function $f$ piecewisely on finitely many disjoint intervals.

Let now $\overrightarrow{f}(t):=f(t-)$ for all $t\in [0,1]$ (we can take left limits as $f$ is of bounded variation). The operation only takes effect on the intervals $(c_k,d_k], k=0,\dotsc,K$, where 
$\overrightarrow{f}(t)=\overrightarrow{f}_{\!\!c_k,d_k}(t):=\overrightarrow{f}_{\!\!c_k,d_k}(t-)$, on the other intervals $\overrightarrow{f}=f$ holds. By Propositions \ref{prop:regulated-var} and \ref{prop:variationLeftLimit}, taking into account the possible jumps at the locations $d_k$, we get
\begin{align*}
&\ell_0^1(\overrightarrow{f})\leq\ell_0^1(f)
\leq\ell_0^{a_0}(\phi)+\ell_{b_0}^1(\phi)+\sum_{k=1}^K\ell_{a_k}^{b_k}(\phi)+\sum_{k=0}^K\ell_{c_k}^{d_k}(f_{c_k,d_k})+\sum_{k=0}^K|\phi(d_k)-\phi(c_k)|\\
&\leq \ell_0^{a_0}(\phi)+\ell_{b_0}^1(\phi)+\sum_{k=1}^K\ell_{a_k}^{b_k}(\phi)+\sum_{k=0}^K\left((d_k-c_k)+V_{c_k}^{d_k}(f_{c_k,d_k})+|\phi(d_k)-\phi(c_k)|\right),
\end{align*}
where we used Proposition \ref{prop:estimate-len-var}. We continue the above estimation with
\begin{align*}
\ell_0^1(\overrightarrow{f})&=\sqrt{a_0^2+(f_a+u-\phi(a_0))^2}+\sqrt{(1-b_0)^2+(f_b+u-\phi(b_0))^2}\\
&\quad+\sum_{k=1}^K\sqrt{(b_k-a_k)^2+(\phi(b_k)-\phi(a_k))^2}\\
&\quad+\sum_{k=0}^{K}\left((d_k-c_k)+|\phi(d_k)-\phi(c_k)|+V_{c_k}^{d_k}(f_{c_k,d_k})\right)\\
&< \sqrt{a_0^2+(f_a+u-\phi(a_0))^2}+\sqrt{(1-b_0)^2+(f_b+u-\phi(b_0))^2}\\
&\quad+\sum_{k=1}^K\sqrt{(b_k-a_k)^2+(\phi(b_k)-\phi(a_k))^2}\\
&\quad+\sum_{k=0}^{K}\left((d_k-c_k)+|\phi(d_k)-\phi(c_k)|+\frac{\delta}{4(K+1)}\right).
\end{align*}
By the form of $\phi$, the last sum can be written as
\begin{align*}
&\sum_{k=0}^{K}\biggl(\sqrt{(d_k-c_k)^2+(\phi(d_k)-\phi(c_k))^2}\\
&\quad\quad+(\|(1,f_b-f_a)\|_1-\|(1,f_b-f_a)\|_2)(d_k-c_k)+\frac{\delta}{4(K+1)}\biggr).
\end{align*}

Glueing the pieces of the line segments together, we infer
$$\ell_0^1(\overrightarrow{f})< \|(0,f_a)-(1,f_b)\|_2+\frac{\delta}{2}.$$

Since $\overrightarrow{f}$ is left-continuous and $g$ is continuous, if $(t_n)_{n\geq 0}$ with $f(t_n)=g(t_n)$ tends to ${ \bar t}\in [0,1]$ from below, then $f({ \bar t}-)=g({ \bar t})$ and therefore ${ \bar t}\in\overline{\{t\in [0,1]:f(t)=g(t)\}}$. Hence the set $\overline{\{t\in [0,1]:\overrightarrow{f}(t)=g(t)\}}$ is also countable.

By our choice of $u$, $a_0>0$ and $b_0<1$, thus $\overrightarrow{f}(0)=f_a+u$ and $f(1)=f(1-)=f_b+u$.

Due to Proposition \ref{prop:variation-length}, there is an arc $\hat{\gamma}\colon[0,1]\to \R^2$ connecting $(0,f_a+u)$ and $(1,f_b+u)$ and $\ell(\hat{\gamma})=\ell_0^1(\overrightarrow{f})< \|(0,f_a)-(1,f_b)\|_2+\frac{\delta}{2}$ and
$\hat{\gamma}([0,1])=\cgraph(\overrightarrow{f})$. Thus additionally to $\overline{\{t\in [0,1]:\overrightarrow{f}(t)=g(t)\}}$, we get the closure of 
$$\Big\{t\!\in\! [0,1]\colon\!\!\overrightarrow{f}(t)\!\neq\! \overrightarrow{f}(t+)\text{ and }\min\big(\overrightarrow{f}(t),\overrightarrow{f}(t+)\big)\leq g(t)\leq \max\big(\overrightarrow{f}(t),\overrightarrow{f}(t+)\big)\!\Big\},$$
which we call $J_g$. Let $J:=\big\{t\in [0,1]:\overrightarrow{f}(t)\neq \overrightarrow{f}(t+)\big\}$ be the set of jump locations of $\overrightarrow{f}$, which is countable, as $\overrightarrow{f}$ is of bounded variation. Take a sequence $(t_n)_{n\geq 0}$ in $J_g$ converging to some $\bar t$. Then there are two possibilities: Either, there is a jump at $\bar t$, then $\bar t$ is contained in $J$. Or, there is no jump at $\bar t$, then $\overrightarrow{f}$ is continuous at $\bar t$. 
In the latter case, by the continuity of $\overrightarrow{f}$ and $g$, it follows that $\overrightarrow{f}(\bar t)=g(\bar t)$ and hence $\bar t\in \overline{\{t\in [0,1]:\overrightarrow{f}(t)=g(t)\}}$. In any case, we have that
$$\overline{J_g}\subseteq J\cup\overline{\{t\in [0,1]:\overrightarrow{f}(t)=g(t)\}},$$
and the right hand side is countable. We infer that $\overline{\hat{\gamma}([0,1])\cap \graph(g)}$ is also countable.

Extending $\hat\gamma$ to an arc $\gamma$ by additionally connecting vertically $(0,f_a)$ with $(0,f_a+u)$ and $(1,f_b)$ with $(1,f_b+u)$, we get, as $u\in \left[\frac{\delta}{4},\frac{\delta}{4}\right]$,
\begin{align*}
&\|(0,f_a)-(1,f_b)\|_2\leq \ell(\gamma)\leq \ell(\hat{\gamma})+\frac{\delta}{2}=\ell_0^1(\overrightarrow{f})+\frac{\delta}{2}\\
&<\|(0,f_a)-(1,f_b)\|_2+\frac{\delta}{2}+\frac{\delta}{2}=\|(0,f_a)-(1,f_b)\|_2+\delta,
\end{align*}
as desired.
\bigskip

Conversely, assume that $\graph(g)$ is a permeable set. Let $y\in \R$ and $\delta>0$. As for the implication before, using a Fubini argument, we infer that there is $u\in [y-\frac{\delta}{4},y+\frac{\delta}{4}]$ such that $\lambda(\{t\in[0,1]:u=g(t)\})=0$ and $g(0)\neq u\neq g(1)$. Set again $F:=\{t\in[0,1]:u=g(t)\}$. Then we can find $K>0$ such that $$F\subseteq[0,1]\setminus\left([0,a_0)\cup(b_0,1]\cup\bigcup_{k=1}^K (a_k,b_k)\right)=:\bigcup_{k=0}^K [c_k,d_k]$$ and $\lambda\left(\bigcup_{k=0}^K [c_k,d_k]\right)=\sum_{k=0}^k(d_k-c_k)<\frac{\delta}{4}$. As $g$ is permeable, we can find paths $\gamma_{c_k,d_k}\colon[0,1]\to\R$, connecting $(c_k,u)$ and $(d_k,u)$ such that $\ell(\gamma_{c_k,d_k})<(d_k-c_k)+\frac{\delta}{4(K+1)}$ and $\overline{\gamma_{c_k,d_k}([0,1])\cap\graph(g)}$ is countable. By Proposition \ref{prop:monComponentPath}, we can find for each $\gamma_{c_k,d_k}$ an arc $\bar{\gamma}_{c_k,d_k}$, connecting the same points and having monotone first component. We then set $\gamma\colon[0,1]\to\R$,
\begin{align*}
\gamma(t):=\begin{cases}
(t,u)\,,& \text{if } t\notin \bigcup_{k=0}^K[c_k,d_k],\\
\bar{\gamma}_{c_k,d_k}\!\!\left(\frac{t-c_k}{d_k-c_k}\right), & \text{if }t\in [c_k,d_k].
\end{cases}
\end{align*}
 By Proposition \ref{prop:variation-length} there exists an unfrayed function $f$ with $\cgraph(f)=\bar\gamma([0,1])$ and $V_0^1(f)= V_0^1(\bar{\gamma}_2)$. Note that $f$ jumps at $t$ if and only if $t$ is in an interval where the paths $\bar{\gamma}_{c_k,d_k}$ have constant first component, so $\{t\in [0,1]:f(t)=g(t)\}$ is determined by the closed sets where $\bar{\gamma}_{c_k,d_k}$ equals $\gamma_{c_k,d_k}$, intersected with the graph of $g$ (which is also closed). Thus, $\overline{\{t\in [0,1]:f(t)=g(t)\}}$ is countable. The variation of $f$ can be estimated by 
 \begin{align*}
 V_0^1(f)=\sum_{k=0}^KV_{c_k}^{d_k}(f)\leq \sum_{k=0}^KV_{c_k}^{d_k}(\gamma_2)\,.
 \end{align*}
 Proposition \ref{prop:estimate-len-var} implies that 
 \begin{align*}
 &V_{c_k}^{d_k}(\gamma_2)=V_{0}^{1}\big((\gamma_{c_k,d_k})_2\big)\leq\sqrt{2}\ell({\gamma}_{c_k,d_k})
 -V_0^1\big((\gamma_{c_k,d_k})_1\big)
 = \sqrt{2}\ell(\bar{\gamma}_{c_k,d_k})-(d_k-c_k)\\
 &\leq \sqrt{2}\ell({\gamma}_{c_k,d_k})-(d_k-c_k),
 \end{align*}
 since $\gamma_{c_k,d_k}$ has increasing first component, and by Proposition \ref{prop:monComponentPath}, $\ell(\bar{\gamma}_{c_k,d_k})\leq \ell({\gamma}_{c_k,d_k})$. As $\ell(\gamma_{c_k,d_k})<(d_k-c_k)+\frac{\delta}{4(K+1)}$, we get that
 \begin{align*}
 V_0^1(f)<\sum_{k=0}^K(\sqrt{2}-1)(d_k-c_k)+\frac{\delta}{4}\leq \frac{\delta}{2}.
 \end{align*}
 Put 
 $$\bar{f}(t):=\begin{cases}
 y,&\text{if }t\in\{0,1\},\\
 f(t),&\text{else.}
 \end{cases}$$
 Since $u\in \left[y-\frac{\delta}{4},y+\frac{\delta}{4}\right]$, we get that $V_0^1(\bar{f})< \delta$ and the set $\overline{\{t\colon [0,1]: \bar{f}(t)=g(t)\}}$ is countable, as asserted.
\end{proof}
}

\subsection{A non-Lipschitz intrinsically Lipschitz function}\label{sec:cantor-madness}

The main goal of this section is Theorem \ref{*thmintr}. This theorem provides
 an example of a continuous function $f\colon \R^2\to \R$
which is intrinsically Lipschitz continuous on $\R^2\setminus \Theta$ for some impermeable subset 
$\Theta\subset \R^2$, but not Lipschitz continuous. 
This example shows that the prerequisite of permeability in \cite[Theorem 15]{leoste21}
cannot simply be dropped.
Ultimately, $\Theta$ will be the graph of a continuous 
function $\theta\colon \R\to\R$, 
and therefore a topological submanifold of $\R^2$, which is closed as a set. By Theorems \ref{thm:bdd} and \ref{th:harmony},   the function  $\theta$ cannot be absolutely continuous  or Lipschitz continuous.


 Recall that the function $g_{H}$ was introduced in Section \ref{sec:hoelder-example}.  
For the remainder of this section let $g\colon [0,1]\to \R$ be defined by
\begin{equation}\label{eq:definition-g}
g(x):=\begin{cases}
g_H(x)\,,&x\in[\varepsilon_1,\varepsilon_2],\\
0\,,& \text{else,}
\end{cases}
\end{equation}
where $ \varepsilon_1$ and $\varepsilon_2$ are the smallest and largest  zeros  of $g_H$, respectively. 
Note that $g$ also has an impermeable graph. Indeed, if this were not the case, then one could find a
function $f\colon [0,1]\to \R$ with $f(0)=0$ and $V_0^1(f)<\frac{1}{4}$ such that the set 
$\overline{\{t\in[0,1]\colon f(t)=g(t)\}}$
is countable. Define
\[
\tilde f(x):=\begin{cases}
f(x)\,,&x\in[\varepsilon_1,\varepsilon_2],\\
1/4 \,,& \text{else.}
\end{cases}
\] 
Then $V_0^1(\tilde f)<V_0^1(f)+|f(\epsilon_1)|+|f(\epsilon_2)|+1/4\le 3V_0^1(f)+1/4 <1$ and
\[
\overline{\{t\in[0,1]\colon \tilde f(t)=g(t)\}}\subseteq\overline{\{t\in[0,1]\colon f(t)=g(t)\}}\,,
\]
so the former is countable, a contradiction.

Let
\begin{equation}\label{eq:Aset}
A:=\{(1+x,g(x))\colon x\in[0,1] \}.
\end{equation}  
According to Theorem  \ref{th:harmony}
and Theorem \ref{thm:hoelder-ex},
this implies that there is $c>0$  such that 
\begin{equation}\label{eq:definition-c}
\ell(\gamma)\ge 1+c
\end{equation} 
for every 
path $\gamma$ from $(1,0)$ to $(2,0)$ with countable $\overline{\gamma([0,1])\cap A}$.

\medskip


Next we define a useful representation of the classical Cantor set as a subset of $\R^2$ using two affine transformations. Let $T_1(x,y):=(\frac{x}{3},\frac{y}{2})$ and $T_0(x,y):=(\frac{2+x}{3},\frac{y}{2})$. 
For a number $n\in \mathbb{N}$ let $d(k,n)\in \{0,1\}$ be the $k$-th binary digit, $n=\sum_{k=0}^\infty d(k,n)2^k$,  and
$T_n:=T_{d(0,n)}\circ \dots \circ T_{d(\lfloor\log_2(n)\rfloor,n)} $.  

Note that, if $C\in [0,1]$ is the classical Cantor set, then  
\[
C\times \{0\}=\big([0,1]\times \{0\}\big)\setminus \bigcup_{n\in \mathbb{N}}T_n\big((1,2)\times \{0\}\big)\,.
\]

\begin{lemma}\label{lem:f-lip}
Define  $\tilde{\Theta}:=\bigcup_{n\in \mathbb{N}}T_n(A)$ and $\Theta:=\tilde{\Theta}\cup (C\times\{0\})$. 
Then $\Theta$ is the graph of a Hölder continuous function $\theta\colon[0,1]\to\R$, with Hölder  exponent 
$\beta:=\tfrac{\log(10)}{\log(234)}$.

Further let $f\colon \R\times\{0\}\to \R$,
\[
f((t,0))=\begin{cases}
0\,,& t<0\,,\\
\cantorf(t)\,,& 0\le t\le 1\,,\\ 
1\,, & t>1\,,
\end{cases}
\]
where $\cantorf\colon [0,1]\to \R$ is the classical Cantor (or Devil's) staircase function, 
cf.\cite[p. 252]{thomson2001elementary}.
Then $f$ is Lip\-schitz continuous with respect to $\rho_{\R^2}^\Theta$ restricted to $\R\times\{0\}$.
\end{lemma}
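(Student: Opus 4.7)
For the first assertion, I would define $\theta\colon[0,1]\to\R$ by setting $\theta\equiv 0$ on $C$ and, on each removed interval $T_n((1,2))$, letting $\theta(x)$ be the unique value with $(x,\theta(x))\in T_n(A)$. This is well-defined because $T_n(A)$ is itself a graph over $T_n([1,2])$; since $g(0)=g(1)=0$, the scaled copies match $C\times\{0\}$ at every Cantor endpoint, giving continuity of $\theta$ and the identity $\graph(\theta)=\Theta$. For H\"older continuity with exponent $\beta=\log 10/\log 234$, I rely on the fact that distinct level-$k$ Cantor intervals are separated by gaps of length at least $3^{-k}$, so any two points $x_1,x_2\in[0,1]$ with $|x_1-x_2|<3^{-k}$ lie in a common level-$k$ Cantor interval. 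Inside such an interval all scaled copies of $g$ appear at level $\geq k+1$, so $|\theta|\leq 2^{-k-1}\max|g|$ there, hence $|\theta(x_1)-\theta(x_2)|\leq 2^{-k}\max|g|$. Because $\beta<\log 2/\log 3$ (equivalently $\log 10\cdot\log 3<\log 2\cdot\log 234$), choosing $k$ with $3^{-(k+1)}\leq|x_1-x_2|$ converts this into the desired H\"older estimate.

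For the Lipschitz claim, write $p_i=(t_i,0)$ and reduce trivially to $0\leq t_1<t_2\leq 1$, so that $|f(p_2)-f(p_1)|=\cantorf(t_2)-\cantorf(t_1)=\mu_C([t_1,t_2])$. Given any path $\gamma$ from $p_1$ to $p_2$ with $\overline{\gamma\cap\Theta}$ at most countable, Proposition \ref{prop:monComponentPath} produces a monotone arc $\bar\gamma$ of no greater length and whose intersection with $\Theta$ remains countable in closure (each modifying vertical segment meets $\graph(\theta)$ in at most one point). By Proposition \ref{prop:variation-length}, $\bar\gamma$ corresponds to an unfrayed function $f_{\bar\gamma}\colon[t_1,t_2]\to\R$ with $f_{\bar\gamma}(t_1)=f_{\bar\gamma}(t_2)=0$ and $V_{t_1}^{t_2}(f_{\bar\gamma})=V_0^1(\bar\gamma_2)\leq\ell(\gamma)$, whose connected graph meets $\Theta$ only countably. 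It then suffices to produce a universal constant $L$ such that $V_{t_1}^{t_2}(f_{\bar\gamma})\geq L^{-1}\mu_C([t_1,t_2])$.

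The key ingredient is a scaled impermeability inequality derived from Theorem \ref{thm:hoelder-ex}: for any level-$k$ Cantor subinterval $I_j=T_m([0,1])\subseteq[t_1,t_2]$ with endpoints $u_j<v_j$, modifying $f_{\bar\gamma}|_{[u_j,v_j]}$ so that its boundary values are $0$ (adding jumps of heights $|f_{\bar\gamma}(u_j)|$ and $|f_{\bar\gamma}(v_j)|$) yields an unfrayed function whose connected graph still meets $\Theta\cap([u_j,v_j]\times\R)$ only countably. Its pullback under $T_m^{-1}$ is an admissible path in $[0,1]\times\R$ from $(0,0)$ to $(1,0)$ whose $y$-variation is scaled by $2^k$; applying Theorem \ref{thm:hoelder-ex} via the level-$1$ obstacle $T_1(A)\subset\Theta$ to this pullback gives the base-case variation lower bound $\geq\tfrac12$, which scales back to
\[
|f_{\bar\gamma}(u_j)|+V_{u_j}^{v_j}(f_{\bar\gamma})+|f_{\bar\gamma}(v_j)|\ \geq\ 2^{-k-1}.
\]
Summing over all $N_k$ level-$k$ Cantor subintervals of $[t_1,t_2]$ and telescoping the boundary terms via the triangle inequality on $f_{\bar\gamma}$ along the gaps between consecutive $I_j$'s should produce $V_{t_1}^{t_2}(f_{\bar\gamma})\gtrsim N_k\cdot 2^{-k}$. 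Letting $k\to\infty$ with $N_k\cdot 2^{-k}\to\mu_C([t_1,t_2])$ then yields the required Lipschitz bound.

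The main obstacle is the telescoping step: the scaled impermeability bounds carry nonnegative endpoint corrections $|f_{\bar\gamma}(u_j)|,|f_{\bar\gamma}(v_j)|$ which, if uncontrolled, could inflate the left-hand sides without corresponding growth in $V_{t_1}^{t_2}(f_{\bar\gamma})$. To manage them one must exploit both the monotone-arc form of $\bar\gamma$ (so the $y$-values at the Cantor endpoints appear in order along the path) and the fact that $V_{t_1}^{t_2}(f_{\bar\gamma})$ restricted to the intervening gaps already dominates the total variation of the sequence of these boundary $y$-values. Making this combinatorial accounting rigorous is the technical heart of the proof.
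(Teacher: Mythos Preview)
Your H\"older argument is different from the paper's case analysis but correct: bounding $|\theta|\le 2^{-k-1}\max|g|$ on each level-$k$ Cantor interval and using $\beta<\log 2/\log 3$ is enough, and avoids the paper's four cases.

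For the Lipschitz claim, however, your route is both more complicated than necessary and contains a real gap---precisely the telescoping step you flag. Summing your scaled impermeability bounds gives
\[
N_k\,2^{-k-1}\ \le\ \sum_j\Big(|f_{\bar\gamma}(u_j)|+V_{u_j}^{v_j}(f_{\bar\gamma})+|f_{\bar\gamma}(v_j)|\Big),
\]
but the endpoint contributions $\sum_j|f_{\bar\gamma}(u_j)|+\sum_j|f_{\bar\gamma}(v_j)|$ are \emph{not} controlled by $V_{t_1}^{t_2}(f_{\bar\gamma})$: if $f_{\bar\gamma}$ sits near a constant height $M$ over most of $[t_1,t_2]$, those sums are of order $2MN_k$ while the total variation is only of order $2M$. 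The triangle inequality goes the wrong way here, and no ordering argument along the monotone arc repairs this, since each individual impermeability inequality is already satisfied once $M\ge 2^{-k-2}$, with no further constraint forcing large variation.

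The paper sidesteps this entirely by using a \emph{single} obstacle instead of $N_k$ of them. Given $t_1<t_2$ in $C$, pick $k$ so that $[t_1,t_2]$ contains a removed gap $I$ at level $k$ with $\cantorf(t_2)-\cantorf(t_1)\le 2^{-k+1}$. Any admissible path $\eta$ from $(t_1,0)$ to $(t_2,0)$ must traverse the column over $I$, which contains exactly one scaled copy $T_n(A)$. Pulling back by $T_n^{-1}$, extracting the subpath crossing $[1,2]\times\R$, and invoking \eqref{eq:definition-c} once gives $\ell(\hat\eta)\ge 1+c$; the $y$-scaling by $2^{-k}$ together with Proposition~\ref{prop:estimate-len-var} then yields
\[
\ell(\eta)\ \ge\ V_0^1(\eta_2)\ =\ 2^{-k}V_0^1(\hat\eta_2)\ \ge\ 2^{-k}\big(\ell(\hat\eta)-1\big)\ \ge\ c\,2^{-k}.
\]
Comparing with $|\cantorf(t_2)-\cantorf(t_1)|\le 2^{-k+1}$ gives Lipschitz constant $2/c$ with no summation or telescoping needed. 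The moral: one well-placed obstacle already matches the Cantor-function increment; your many-interval accounting is not required and does not close.
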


\begin{proof} 
We first show the Lipschitz continuity of (the restriction of) $f$ with respect to $\rho^\Theta_{\R^2}$. Let $t_1,t_2\in \R$. We have to  show that \[|f((t_2,0))-f((t_1,0))|\le \frac{2}{c}\rho^\Theta_{\R^2}((t_1,0),(t_2,0))\,.\]

We concentrate on the interesting case where $t_1,t_2\in C$ with $t_1< t_2$. 
Let $k=\min\{j\ge 1: 1<(t_2-t_1)3^{j}\}+1$ . 
There is precisely one interval $I$ of the form $I=(j3^{-k},(j+1)3^{-k})$ between $t_1$ and $t_2$ and therefore there exists 
$n\in \mathbb{N}$
with $I=\pr_1\circ T_n\big((1,2)\times\{0\}\big)$,  and $k=\lfloor \log_2(n)\rfloor+1$. 

W.l.o.g., $t_1\le (\frac{1}{3})^{k}<2\cdot(\frac{1}{3})^k\le t_2$, such that there exist 
 $s_1\in [0,1]$  and $s_2\in [2,3]$ such that 
$(t_1,0)=T_{2^{k}-1}(s_1,0)$ and
$(t_2,0)=T_{2^{k}-1}(s_2,0)$. 

Let $\eta\colon [0,1]\to \R^2$  be a path from $(t_1,0)$ to $(t_2,0)$ with $\ell(\eta)<\infty$ and such that $\eta ([0,1])\cap \Theta$ has countable closure. 
Write $\hat{\eta}:=(T_{2^k-1})^{-1}\circ \eta$, i.e.,
$\eta= (T_1)^{k}(\hat{\eta})$. 
 Note that $(s_1,0)=\hat{\eta}(0)$ and $(s_2,0)=\hat{\eta}(1)$.
Let $r_0:=\sup\{t\ge 0: \hat\eta_1(t)=1\}$ and
$r_1:=\inf\{t\ge t_0: \hat\eta_1(t)=2\}$. 
We define a path $\gamma$ from $(1,0)$ to $(2,0)$ as the concatenation of 
the straight line from $(1,0)$ to $\hat\eta(r_0)$, the path $\hat\eta|_{[r_0,r_1]}$ and 
the straight line from $\hat\eta(r_1)$ to $(2,0)$. 
Note that $\gamma([0,1])\cap A\subseteq \hat\eta([0,1])\cap A$ and therefore 
the closure of  $\gamma([0,1])\cap A$ is countable. Using \eqref{eq:definition-c}, we get
\begin{align*}
1+c&\le \ell(\gamma)=|\hat\eta(r_0)-(1,0)|+\ell(\hat\eta|_{[r_0,r_1]})+|\hat\eta(r_1)-(2,0)|\\
&=|\hat\eta_2(r_0)|+\ell(\hat\eta|_{[r_0,r_1]})+|\hat\eta_2(r_1)|\\
&\le \|\hat\eta(r_0)-(s_1,0)\|+\ell(\hat\eta|_{[r_0,r_1]})+\|\hat\eta(r_1)-(s_2,0)\|\\
&\le \ell(\hat\eta|_{[0,r_0]})+\ell(\hat\eta|_{[r_0,r_1]})+\ell(\hat\eta|_{[r_1,1]})
=\ell(\hat\eta)\,.
\end{align*}

We can find a path $\bar\gamma\colon [0,1]\to \R^2$ 
with increasing first component like in Proposition \ref{prop:monComponentPath}. Thus $\ell(\bar\gamma)\le \ell(\gamma)$
and $V_0^1(\bar\gamma_2)\le V_0^1(\gamma_2)$. 
We show like in the proof of Theorem \ref{th:harmony}, that 
$\overline{\bar\gamma([0,1])\cap A}$ is countable: By construction, $\bar{\gamma}$ has an at most countable number of vertical line segments, where intersections with the set $A$ that have not already been part of $\overline{\gamma([0,1])\cap A}$, may occur. Since $A$ is the graph of a function, each vertical line segment admits at most one additional intersection point. Let now $\bar{x}$ be a limit point of those additional intersection points. Then $\bar{x}$ is either part of a vertical segment of $\bar{\gamma}$, of which there are at most countably many, or $\bar{x}$ is not part of a vertical segment.
 In the latter case, $\bar{x}$ must be contained in $\gamma([0,1])$, since on the complement of the vertical segments, the images of $\gamma$ and $\bar{\gamma}$ coincide (by Proposition \ref{prop:monComponentPath}) and they both contain $\bar{x}$ because of their continuity. So, in this case, $\bar{x}$ is contained in $\overline{\gamma([0,1])\cap A}$, which is countable. So in both cases, the possible elements of $\overline{\bar\gamma([0,1])\cap A}$ are contained in $\overline{\gamma([0,1])\cap A}$ or in the countable set of intersection points arising from vertical segments of $\bar{\gamma}$. Hence $\overline{\bar\gamma([0,1])\cap A}$ is countable. Therefore \eqref{eq:definition-c} holds with $\bar\gamma$ instead of $\gamma$.
 Using Proposition \ref{prop:estimate-len-var} we get
(note that $T_{2^{k}-1}=(T_1)^{k}$)
\begin{align*}
\ell (\eta)&\ge V_0^1 \big( \eta_2 \big)=\left(\frac{1}{2}\right)^k V_0^1 \big( \hat{\eta}_2\big)
\ge\left(\frac{1}{2}\right)^k V_0^1 \big( \bar\gamma_2 \big)\\
&\ge \left(\frac{1}{2}\right)^k  \Big(\ell(\bar\gamma)-V_0^1 \big( \bar\gamma_1  \big)\Big)
\ge\left(\frac{1}{2}\right)^k  \big(\ell(\bar\gamma)-1\big)\ge \left(\frac{1}{2}\right)^k c\,.
\end{align*}
It follows that $\rho^\Theta_{\R^2}\big((t_1,0),(t_2,0)\big)\ge \left(\frac{1}{2}\right)^k c$.

On the other hand, we have $\big|f((t_2,0))-f((t_1,0))\big|\le  \left(\frac{1}{2}\right)^{k-1}$, so that 
\[
\big|f((t_2,0))-f((t_1,0))\big|\le\frac{2}{c}\rho^\Theta_{\R^2}\big((t_1,0),(t_2,0)\big)\,.
\]

Since $g(0)=g(1)=0$, $\Theta$ is the graph of a continuous function $\theta\colon [0,1]\to \R$ by construction. Now we show the claim about the Hölder continuity of $\theta$. 
For this assume that $g$ is Hölder continuous with exponent $\alpha$ and constant $C_g$, and let 
$t_1,t_2\in [0,1]$. 
If $t_1,t_2\in C$, then $\theta(t_1)=\theta(t_2)=0$ and there is nothing to show.
Next consider the case where $t_1,t_2$ are in the same connected 
component of $\R\setminus C$. That is, $(t_1,0),(t_2,0)\in T_n((1,2)\times\{0\})$ for some $n\in \N$.
Again we may concentrate on the case $n=2^k-1$, so that w.l.o.g.
$\frac{1}{3}(\frac{1}{3})^{k-1}<t_1<t_2<\frac{2}{3}\cdot(\frac{1}{3})^{k-1}$, and 
$1<s_1:=\pr_1\big(T_n^{-1}(t_1,0)\big)<s_2:=\pr_1\big(T_n^{-1}(t_2,0)\big)<2$.
By Theorem \ref{thm:hoelder-ex} and \eqref{eq:definition-g},   we have 
\[
|g(s_1)-g(s_2)|\le C_g|s_1-s_2|^{\beta }\,,
\]
so that 
\begin{align*}
|\theta(t_1)-\theta(t_2)|&=2^{-k}|g(s_1)-g(s_2)|\le 2^{-k}C_g|s_1-s_2|^{\beta }\\
&= 2^{-k}C_g(3^k|t_1-t_2|)^{\beta }=\left(\frac{3^{{\beta }}}{2}\right)^kC_g|t_1-t_2|^{\beta }\,.
\end{align*}
Now since  ${\beta }\le \tfrac{\log(2)}{\log(3)}$, we have  $\tfrac{3^{{\beta }}}{2}\le 1$ and 
$|\theta(t_1)-\theta(t_2)|\le C_g|t_1-t_2|^{\beta }$. 
Next suppose that $t_1\in C$, $t_2\in [0,1]\setminus C$.  
Denote by $(u,v)$  the
largest open interval with 
$t_2\in (u,v)   \subseteq [0,1]\setminus C$. If  $t_1<t_2$, then by the continuity of $\theta$
\[
|\theta(t_1)-\theta(t_2)|=|0-\theta(t_2)|=|\theta(u )-\theta(t_2)|\le C_g |u -t_2|^\beta\le C_g |t_1-t_2|^\beta\,,
\]
and in the same way we can treat the case  $t_2<t_1$.

Next we consider the case where $t_1$ and $t_2$, $t_1<t_2$, lie in different connected components 
 $(u_1,v_1)$ and $(u_2,v_2)$  of $[0,1]\setminus C$, respectively. 
 Then 
 \begin{align*}
 |\theta(t_1)-\theta(t_2)|
 &=|\theta(t_1)-\theta(v_1)+\theta(u_2)-\theta(t_2)|\\
 &\le |\theta(t_1)-\theta(v_1)|+|\theta(u_2)-\theta(t_2)|\\
 &\le C_g|t_1-v_1|^\beta+C_g|u_2-t_2|^\beta\\
 &\le 2^{1-\beta}C_g\big(|t_1-v_1|+|u_2-t_2|\big)^\beta
 \le 2^{1-\beta}C_g|t_1-t_2|^\beta\,,
 \end{align*} 
where we used the special case Hölder inequality $(x+y)^q\le 2^{q-1}(x^q+y^q)$ for all $x,y\in [0,\infty)$, $q\in [1,\infty)$ with $x=|t_1-v_1|^\beta$,
$y=|u_2-t_2|^\beta$ and $q=1/\beta$. 
\end{proof}


\begin{theorem}\label{*thmintr}
There exists a continuous function $f\colon\R^2\to \R$ which is intrinsically Lipschitz continuous
on $\R^2 \setminus \Theta$, where $\Theta$ is the graph of a Hölder continuous function $\theta$, but $f$ is not Lipschitz with respect to the Euclidean  metric.
\end{theorem}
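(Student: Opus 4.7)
The plan is to construct the required continuous function on $\R^2$ by extending the one-dimensional function $f$ from Lemma \ref{lem:f-lip} (defined on $\R\times\{0\}$) to an $\tilde f\colon\R^2\to\R$. Writing $F(x):=f((x,0))$ for brevity, the naive extension $(x,y)\mapsto F(x)$ is \emph{not} intrinsically Lipschitz on $E:=\R^2\setminus\Theta$: horizontal segments at moderate heights may avoid $\Theta$ and give small intrinsic distances, while $F$ restricted to the $x$-axis is only Hölder continuous. A better option is the McShane--Whitney extension of $f|_{(\R\times\{0\})\setminus\Theta}$ to $E$ with respect to the intrinsic metric $\rho_{\R^2\setminus\Theta}$: for $p\in E$ set
\[
\tilde f(p):=\inf\bigl\{f(q)+L\,\rho_{\R^2\setminus\Theta}(p,q):q\in(\R\times\{0\})\setminus\Theta\bigr\},
\]
with $L:=2/c$. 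Since paths in $E$ have empty (hence countable) intersection with $\Theta$, one has $\rho_{\R^2\setminus\Theta}\ge\rho^\Theta_{\R^2}$ on $E\times E$, so Lemma \ref{lem:f-lip} guarantees that $f|_{(\R\times\{0\})\setminus\Theta}$ is $L$-Lipschitz with respect to $\rho_{\R^2\setminus\Theta}$. The standard McShane--Whitney construction then yields that $\tilde f$ is $L$-Lipschitz on $E$ with respect to $\rho_{\R^2\setminus\Theta}$ and coincides with $f$ on $(\R\times\{0\})\setminus\Theta$, which settles the intrinsic Lipschitz property.

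Next I extend $\tilde f$ continuously across $\Theta$. At a point $\theta_0=(x_0,\theta(x_0))\in\Theta$ the natural value is $F(x_0)$. For a sequence $p_n\to\theta_0$ in $E$, I plan to pick anchor points $(t_n,0)$ with $t_n\notin C$, $t_n\to x_0$, and $\rho_{\R^2\setminus\Theta}(p_n,(t_n,0))\to 0$, obtained by constructing explicit short admissible paths that skirt the small-scale copies $T_m(A)$ accumulating at $\theta_0$; the $y$-diameter of $T_m(A)$ at recursion depth $k$ is at most $10\cdot 2^{-k}$, so the necessary detours shrink as one moves to finer levels. Combined with $L$-Lipschitzness of $\tilde f$ and continuity of $F$, this forces $\tilde f(p_n)\to F(x_0)$ irrespective of the direction of approach. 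Setting $\tilde f(\theta_0):=F(x_0)$ then produces a continuous $\tilde f\colon\R^2\to\R$.

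Once continuity is established, failure of Euclidean Lipschitzness is automatic: $\tilde f|_{\R\times\{0\}}$ equals $F$, which is the trivial extension to $\R$ of the Cantor staircase $\cantorf$ on $[0,1]$. Since $\cantorf$ is only $(\log 2/\log 3)$-Hölder continuous, it cannot be Lipschitz, and therefore neither is $\tilde f$ on $\R^2$ with respect to $\|\cdot\|_2$. Combined with the Hölder continuity of $\theta$ produced in Lemma \ref{lem:f-lip}, all three claims of the theorem follow.

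The main obstacle I anticipate is the continuity step at points of $\Theta$ off the $x$-axis, i.e.~where $\theta(x_0)\ne 0$: one has to show that the McShane infimum has the same limit $F(x_0)$ along every direction of approach to $\theta_0$, including from above and below $\Theta$. Controlling $\rho_{\R^2\setminus\Theta}(p_n,(t_n,0))$ from above requires a careful exploitation of the self-similar structure of $\Theta$ via the maps $T_n$, while preventing the infimum from being realised by a far-away anchor point $(t',0)$ with much smaller $F(t')$ ultimately rests on the impermeability estimate $\ell(\gamma)\ge 1+c$ of \eqref{eq:definition-c}.
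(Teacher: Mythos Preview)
Your overall strategy---extend the Cantor-staircase data from the $x$-axis to the plane via a Lipschitz extension, then verify continuity across $\Theta$ and observe that the restriction to the $x$-axis is not Euclidean-Lipschitz---matches the paper's. The substantive difference is the metric you extend against: you use the intrinsic metric $\rho_{\R^2\setminus\Theta}$ on $E$, whereas the paper extends (via Kirszbraun) with respect to the $\Theta$-intrinsic metric $\rho^\Theta_{\R^2}$, which is defined on all of $\R^2$ and allows paths that meet $\Theta$ in a countable (here: finite) set. This choice lets the paper obtain the extension directly on $\R^2$ and reduce Euclidean continuity to the single estimate $\rho^\Theta_{\R^2}(x_n,x)\to 0$ for $x_n\to x$, proved by an explicit three-segment path that touches $\Theta$ only at its endpoints.

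Your continuity argument has a concrete gap. You claim one can choose anchors $(t_n,0)$ with $t_n\to x_0$ and $\rho_{\R^2\setminus\Theta}(p_n,(t_n,0))\to 0$. But if $\theta(x_0)\ne 0$ (which happens on a dense set), then $p_n\to(x_0,\theta(x_0))$ while $(t_n,0)\to(x_0,0)$, so already the Euclidean distance satisfies $\|p_n-(t_n,0)\|\to|\theta(x_0)|>0$, and hence $\rho_{\R^2\setminus\Theta}(p_n,(t_n,0))\not\to 0$. Thus your upper bound on $\tilde f(p_n)$ only gives $F(x_0)+L|\theta(x_0)|$, not $F(x_0)$, and the analogous lower bound leaves a gap of width $2L|\theta(x_0)|$. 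More fundamentally, the specific McShane formula you wrote is the \emph{smallest} $L$-Lipschitz extension with respect to $\rho_E$, and there is no a priori reason its one-sided limits across $\Theta$ should agree; showing that some $\rho_E$-Lipschitz extension admits a continuous extension to $\Theta$ is precisely the hard part, and your outline does not address it. The clean fix is to do what the paper does: work with $\rho^\Theta_{\R^2}$ from the start, so that the extension lives on $\R^2$ and continuity reduces to exhibiting short paths to $\theta_0$ that touch $\Theta$ only finitely often.
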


\begin{proof} The function $f$ constructed  in Lemma \ref{lem:f-lip}   is Lipschitz continuous on $\R\times \{0\}$ with respect to $\rho^\Theta_{\R^2}$.

By the Kirszbraun extension theorem, cf.~\cite[Theorem 2.10.43]{federer1969} (or the elementary special case 
\cite[2.10.44]{federer1969}), there exists a Lipschitz extension of $f$ on  $\R^2$
which we again denote by $f$.
Hereby `Lipschitz' means with respect to the metric $\rho_{\R^2}^\Theta$. But since
$\rho_{\R^2}^\Theta\le \rho_{\R^2\setminus \Theta}$, $f$ is also Lipschitz with respect to the intrinsic metric on 
$\R^2\setminus\Theta$. Restricted to the domain $[0,1]\times\{0\}$, $f$ equals  the Cantor staircase function, which is not Lipschitz with respect to the  Euclidean  metric. It remains to show the continuity of $f$ with respect to the  Euclidean metric:

To that end, take a sequence $(x_n)_n$, converging to $x\in\R^2$. Note that, being the  graph of a continuous function on $[0,1]$, $\Theta$ is closed.\medskip

{\em Case 1:} $x\notin \Theta$.\\
Then there is $r>0$ and $N\in\N$ such that for all $n\geq N$, $x_n\in B_r(x)\subseteq\R^2\setminus \Theta$. It follows that in this ball  $\rho^\Theta_{\R^2}$ equals the  Euclidean metric. So $f$ is Lipschitz
therein  with respect to both metrics and in particular $f$ is continuous in $x$.\medskip

{\em Case 2:} $x\in \Theta$.\\
Then w.l.o.g., we can assume that all the $x_n$ lie above or on the graph of $\theta$ and $\pr_1(x_n)\nearrow\pr_1(x)$. Let $v_n:=\big(\pr_1(x_n), \theta(\pr_1(x_n))\big)$. Then, as $x_n\to x$, also $x_n - v_n\to 0$. Also, since $\theta$ is continuous, 
\begin{align*}
M(x_n,x):=\max_{\pr_1(x_n)\leq s\leq \pr_1(x)}(\theta(s)-\theta(\pr_1(x_n)))\to 0.
\end{align*}
Now define
\begin{align*}
w_n:=\left(\pr_1(x_n), \max\left\{\pr_2(x_n),\tfrac{1}{n}+\theta(\pr_1(x_n))+M(x_n,x)\right\}\right)
\end{align*}
and
\begin{align*}
z_n:=\left(\pr_1(x), \max\left\{\pr_2(x_n),\tfrac{1}{n}+\theta(\pr_1(x_n))+M(x_n,x)\right\} \right).
\end{align*}
Then, the path $\gamma_n$, defined as the concatenation of line segments $v_n\to w_n\to z_n\to x$ is in $\R^2\setminus\Theta$ apart from $v_n$ and $x$ and contains $x_n$ in its image. Forms of such paths are illustrated in Figure \ref{fig:f-cont}.
\begin{figure}
\begin{center}
\begingroup%
  \makeatletter%
  \providecommand\color[2][]{%
    \errmessage{(Inkscape) Color is used for the text in Inkscape, but the package 'color.sty' is not loaded}%
    \renewcommand\color[2][]{}%
  }%
  \providecommand\transparent[1]{%
    \errmessage{(Inkscape) Transparency is used (non-zero) for the text in Inkscape, but the package 'transparent.sty' is not loaded}%
    \renewcommand\transparent[1]{}%
  }%
  \providecommand\rotatebox[2]{#2}%
  \newcommand*\fsize{\dimexpr\f@size pt\relax}%
  \newcommand*\lineheight[1]{\fontsize{\fsize}{#1\fsize}\selectfont}%
  \ifx\svgwidth\undefined%
    \setlength{\unitlength}{189.40857757bp}%
    \ifx\svgscale\undefined%
      \relax%
    \else%
      \setlength{\unitlength}{\unitlength * \real{\svgscale}}%
    \fi%
  \else%
    \setlength{\unitlength}{\svgwidth}%
  \fi%
  \global\let\svgwidth\undefined%
  \global\let\svgscale\undefined%
  \makeatother%
  \begin{picture}(1,0.57448679)%
    \lineheight{1}%
    \setlength\tabcolsep{0pt}%
    \put(0,0){\includegraphics[width=\unitlength,page=1]{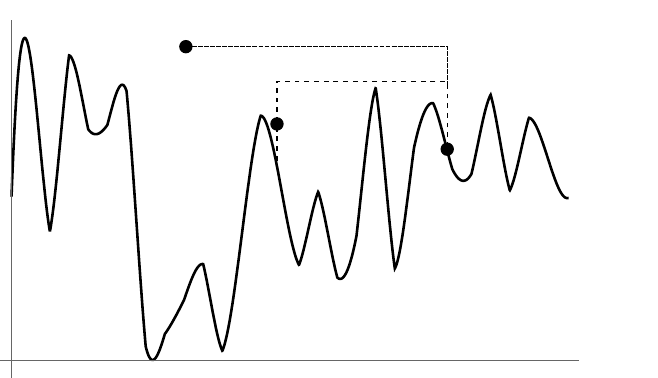}}%
    \put(0.66280821,0.3043549){\makebox(0,0)[t]{\lineheight{1.25}\smash{\begin{tabular}[t]{c}$x$\end{tabular}}}}%
    \put(0.3413213,0.53280173){\makebox(0,0)[t]{\lineheight{1.25}\smash{\begin{tabular}[t]{c}$x_n=w_n$\end{tabular}}}}%
    \put(0.47655026,0.37408775){\makebox(0,0)[t]{\lineheight{1.25}\smash{\begin{tabular}[t]{c}$x_{n'}$\end{tabular}}}}%
    \put(0.47650297,0.46437078){\makebox(0,0)[t]{\lineheight{1.25}\smash{\begin{tabular}[t]{c}$w_{n'}$\end{tabular}}}}%
    \put(0.73186487,0.45887444){\makebox(0,0)[t]{\lineheight{1.25}\smash{\begin{tabular}[t]{c}$z_{n'}$\end{tabular}}}}%
    \put(0.72273394,0.52576597){\makebox(0,0)[t]{\lineheight{1.25}\smash{\begin{tabular}[t]{c}$z_{n}$\end{tabular}}}}%
    \put(0,0){\includegraphics[width=\unitlength,page=2]{continuous-neu.pdf}}%
    \put(0.24820424,0.13664154){\makebox(0,0)[t]{\lineheight{1.25}\smash{\begin{tabular}[t]{c}$v_{n}$\end{tabular}}}}%
    \put(0,0){\includegraphics[width=\unitlength,page=3]{continuous-neu.pdf}}%
    \put(0.47237184,0.31243199){\makebox(0,0)[t]{\lineheight{1.25}\smash{\begin{tabular}[t]{c}$v_{n'}$\end{tabular}}}}%
    \put(0.9012242,0.28616852){\makebox(0,0)[t]{\lineheight{1.25}\smash{\begin{tabular}[t]{c}$\theta$\end{tabular}}}}%
  \end{picture}%
\endgroup%
\end{center}
\caption{Illustration of the proof of continuity of $f$ with respect to the Euclidean  metric on 
$\R^2$ .}\label{fig:f-cont}
\end{figure}

Further,
\begin{align*}
\ell(\gamma_n)=&\max\left\{\pr_2(x_n)-\theta(\pr_1(x_n)),\tfrac{1}{n}+M(x_n,x)\right\}+(\pr_1(x)-\pr_1(x_n))\\
&\ +\max\left\{\pr_2(x_n)-\theta(\pr_1(x)),\tfrac{1}{n}+\theta(\pr_1(x_n))-\theta(\pr_1(x))+M(x_n,x)\right\},
\end{align*}
and $\ell(\gamma_n)\to 0$ as all terms on the r.h.s.~tend to $0$. Since the image of $\gamma_n$ has at most  two points in common with $\Theta$, it follows that
$\rho^\Theta_{\R^2}(x_n,x)\leq \ell(\gamma_n)\to 0$.  By the Lipschitz continuity with respect to $\rho^\Theta_{\R^2}$ with Lipschitz constant, say $L$, we have
$$|f(x_n)-f(x)|\leq L\rho^\Theta_{\R^2}(x_n,x)\to 0\quad\text{as}\quad x_n\to x,$$
and we obtain the continuity of $f$.
\end{proof}

\section*{Acknowledgements}

Gunther Leobacher and Alexander Steinicke are grateful to Dave L.~Renfro for connecting them with 
Zolt\'an Buczolich.


\begin{thebibliography}{10}

\bibitem{banach25}
S.~Banach.
\newblock Sur les lignes rectifiables et les surfaces dont l'aire est finie.
\newblock {\em Fund. Math.}, 7:225--236, 1925.

\bibitem{benavides}
T.D. Benavides.
\newblock {How Many Zeros Does a Continuous Function Have?}
\newblock {\em The American Mathematical Monthly}, 93(6):464--466, 1986.

\bibitem{Brown1999}
J.B. Brown, U.B. Darji, and E.~Larsen.
\newblock Nowhere monotone functions and functions of nonmonotonic type.
\newblock {\em Proceedings of the American Mathematical Society},
  127(1):173--182, 1999.

\bibitem{bruckner93}
A.M. Bruckner and S.H. Jones.
\newblock Behavior of continuous functions with respect to intersection
  patterns.
\newblock {\em Real Analysis Exchange}, 19(2):414--432, 1993/94.

\bibitem{Buczolich88}
Z.~Buczolich.
\newblock Sets of convexity of continuous functions.
\newblock {\em Acta Mathematica Hungarica}, 52(3):291--303, 1988.

\bibitem{Cech31}
E.~{\v C}ech.
\newblock Sur les fonctions continues qui prennent chaque leur valeur un nombre
  fini de fois.
\newblock {\em Fundamenta Mathematicae}, 17:32--39, 1931.

\bibitem{falconer}
K.J. Falconer.
\newblock {\em {The geometry of fractal sets}}, volume~85.
\newblock Cambridge University Press, Cambridge, 1985.

\bibitem{federer1969}
H.~Federer.
\newblock {\em Geometric measure theory}.
\newblock Die Grundlehren der mathematischen Wissenschaften, Band 153.
  Springer-Verlag New York Inc., New York, 1969.

\bibitem{rajala2019}
S.~Kalmykov, L.V. Kovalev, and T.~Rajala.
\newblock Removable sets for intrinsic metric and for holomorphic functions.
\newblock {\em Journal d'Analyse Math\'e{}matique}, pages 751--772, 2019.

\bibitem{karatzas-shreve}
I.~Karatzas and S.E. Shreve.
\newblock {\em Brownian Motion and Stochastic Calculus}.
\newblock Graduate Texts in Mathematics (113) (Book 113). Springer New York,
  1991.

\bibitem{kechris1995classical}
A.~Kechris.
\newblock {\em Classical Descriptive Set Theory}.
\newblock Graduate Texts in Mathematics. Springer New York, 1995.

\bibitem{klenke}
A.~Klenke.
\newblock {\em Probability Theory: A Comprehensive Course}.
\newblock Universitext. Springer London, 2013.

\bibitem{leoste21}
G.~Leobacher and A.~Steinicke.
\newblock {Exception sets of intrinsic and piecewise Lipschitz functions}.
\newblock {\em Journal of Geometric Analysis}, 2022.
\newblock To appear.

\bibitem{sz2016b}
G.~Leobacher and M.~Sz\"olgyenyi.
\newblock A strong order $1/2$ method for multidimensional {SDE}s with
  discontinuous drift.
\newblock {\em Ann. Appl. Probab.}, 27(4):2383--2418, 2017.

\bibitem{sz2017c}
G.~Leobacher and M.~Sz\"olgyenyi.
\newblock {Convergence of the Euler-Maruyama method for multidimensional SDEs
  with discontinuous drift and degenerate diffusion coefficient}.
\newblock {\em Numerische Mathematik}, 138(1):219--239, 2018.

\bibitem{minakshisundar}
S.~Minakshisundar.
\newblock On the roots of a continuous nondifferentiable function.
\newblock {\em J. Indian Math.}, 4:31--33, 1940.

\bibitem{tmgly20}
T.~M{\"u}ller-Gronbach and L.~Yaroslavtseva.
\newblock On the performance of the {E}uler--{M}aruyama scheme for {SDE}s with
  discontinuous drift coefficient.
\newblock {\em Ann. Inst. H. Poincar{\'e} Probab. Statist.}, 56(2):1162--1178,
  2020.

\bibitem{tmgly22}
T.~M\"{u}ller-Gronbach and L.~Yaroslavtseva.
\newblock A strong order 3/4 method for {SDE}s with discontinuous drift
  coefficient.
\newblock {\em IMA J. Numer. Anal.}, 42(1):229--259, 2022.

\bibitem{sz2019}
A.~Neuenkirch, M.~Sz\"{o}lgyenyi, and L.~Szpruch.
\newblock An {A}daptive {E}uler--{M}aruyama {S}cheme for {S}tochastic
  {D}ifferential {E}quations with {D}iscontinuous {D}rift and its {C}onvergence
  {A}nalysis.
\newblock {\em SIAM J. Numer. Anal.}, 57(1):378--403, 2019.

\bibitem{PonceCampuzano2015VitoVC}
J.C. Ponce-Campuzano and M.{\'A}. Maldonado-Aguilar.
\newblock Vito {V}olterra's construction of a nonconstant function with a
  bounded, non-riemann integrable derivative.
\newblock {\em BSHM Bulletin: Journal of the British Society for the History of
  Mathematics}, 30:143 -- 152, 2015.

\bibitem{rajalanote}
T.~Rajala.
\newblock A note on the level sets of {H}{\"o}lder continuous functions on the
  real line.
\newblock 2008.
\newblock \url{http://users.jyu.fi/~tamaraja/publications/Raj08Hol.pdf},
  {O}nline, accessed 07-26-21.

\bibitem{revuz2004continuous}
D.~Revuz and M.~Yor.
\newblock {\em Continuous Martingales and Brownian Motion}.
\newblock Grundlehren der mathematischen Wissenschaften. Springer Berlin
  Heidelberg, 2004.

\bibitem{thomson2001elementary}
B.S. Thomson, J.B. Bruckner, and A.M. Bruckner.
\newblock {\em Elementary Real Analysis}.
\newblock Prentice-Hall, 2001.

\bibitem{younsi2015}
M.~Younsi.
\newblock {On removable sets for holomorphic functions}.
\newblock {\em EMS Surv. Math. Sci}, 2(2):219--254, Nov 2015.

\end{thebibliography}

\subsection*{Author's addresses}

\noindent {Zolt\'an Buczolich}
{Department of Analysis, ELTE E\"otv\"os Lor\'and University, 
P\'azm\'any P\'eter s\'e\-t\'any 1/c, H-1117 Budapest, Hungary}
\\{\tt zoltan.buczolich@ttk.elte.hu}\\

\noindent Gunther Leobacher, Institute of Mathematics and Scientific Computing, University of Graz. 
Heinrichstraße 36, 8010 Graz, Austria. \\{\tt gunther.leobacher@uni-graz.at}\\

\noindent Alexander Steinicke, Institute of Mathematics, 
University of Rostock,
Ulmenstra\ss e 69, Haus 3, 18051 Rostock, Germany. \\{\tt alexander.steinicke@uni-rostock.de}\smallskip

\noindent Alexander Steinicke, Institute of Applied Mathematics, 
Montanuniversitaet Le\-o\-ben.
Peter-Tunner-Straße 25/I, 8700 Leoben, Austria. \\{\tt alexander.steinicke@unileoben.ac.at}

\end{document}